\numberwithin{equation}{section}
\newtheorem{lemma}{Lemma}[section]
\newtheorem{prop}[lemma]{Proposition}
\newtheorem{thm}[lemma]{Theorem}
\theoremstyle{definition}
\theoremstyle{remark}
\newtheorem{remark}[lemma]{Remark}
\def\R{\mathbb{R}}
\def\N{\mathbb{N}}
\def\t{\mathbf{t}}
\def\L{\mathcal{L}}
\def\O{\mathcal{O}}
\def\a{\mathbf{a}}
\def\b{\mathbf{b}}
\def\i{\mathbf{i}}
\def\j{\mathbf{j}}
\def\k{\mathbf{k}}
\def\p{\mathbf{p}}
\def\tt{\mathcal{T}}
\def\A{\mathcal{A}}
\def\I{\mathcal{I}}
\def\O{\mathcal{O}}
\def\S{\mathcal{S}}
\def\x{\mathbf{x}}
\def\p{\mathbf{p}}
\numberwithin{equation}{section} \numberwithin{table}{section}
\title{Quantitative recurrence and the shrinking target problem for overlapping iterated function systems}
\author{Simon Baker$^1$ and Henna Koivusalo$^2$\\\\
	\emph{$^{1}$Department of Mathematical Sciences,} \\ \emph{Loughborough University,} \\ \emph{Loughborough, LE11 3TU, UK} \\ Email: simonbaker412@gmail.com\\ \\
	\emph{$^2$School of Mathematics,} \\ \emph{University of Bristol,} \\
	\emph{Bristol, BS8 1UG, UK}\\ Email: henna.koivusalo@bristol.ac.uk\\ \\
}
\date{\today}
\begin{document}
	\maketitle

	\begin{abstract}
		In this paper we study quantitative recurrence and the shrinking target problem for dynamical systems coming from overlapping iterated function systems. Such iterated function systems have the important property that a point often has several distinct choices of forward orbit. As is demonstrated in this paper, this non-uniqueness leads to different behaviour to that observed in the traditional setting where every point has a unique forward orbit.
		
		We prove several almost sure results on the Lebesgue measure of the set of points satisfying a given recurrence rate, and on the Lebesgue measure of the set of points returning to a shrinking target infinitely often. In certain cases, when the Lebesgue measure is zero, we also obtain Hausdorff dimension bounds. One interesting aspect of our approach is that it allows us to handle targets that are not simply balls, but may have a more exotic geometry. 
		
	\end{abstract}
	\noindent \emph{Mathematics Subject Classification 2010}: 28A80, 28D05, 37C45, 60F20. \\
	
	\noindent \emph{Key words and phrases}: Fractal Geometry, self-affine sets, recurrence, shrinking targets.

	\section{Introduction}
	\subsection{Dynamical systems with choice} For many dynamical systems one may be interested in, from both a pure and applied perspective, one may encounter situations where it is natural for an element of the state space to have a choice for its forward trajectory. This phenomenon can be observed in the setting of random walks, and similarly in the setting of applied models for real world events that have some in-built randomness. This paper is motivated by the following general question: 
	
	\begin{center}
		For such dynamical systems,	what extreme behaviours do we observe if instead of considering a single choice of forward trajectory, we consider all forward trajectories?
	\end{center}
	In this paper we study this question in the context of dynamical systems arising from iterated function systems. The extreme behaviours we are interested come from the shrinking target problem and quantitative recurrence. Before detailing our results we provide the relevant background from these areas and from Fractal Geometry. 
	
	\subsection{Fractal Geometry}
	Given a finite set of invertible $d\times d$ matrices $\{A_i\}_{i\in \I}$ satisfying $\|A_i\|<1$ for all $i\in \I$, and a finite set of vectors $\{t_i\}_{i\in \I}$ each belonging to $\R^d$, we can associate the set of contracting maps $\{S_i:\mathbb{R}^d\to\mathbb{R}^d\}_{i\in \I}$ where each $S_i$ is given by $S_{i}(x)=A_ix+t_i.$ We call $\{S_i\}_{i\in \I}$ an affine iterated function system or IFS for short. Importantly, one can associate to any IFS a unique, non-empty, compact set $X$ satisfying 
	\[
	X=\bigcup_{i\in \I}S_{i}(X). 
	\]	
	The set $X$ is called the self-affine set or invariant set associated to $\{S_i\}_{i\in \I}$. These $X$ often exhibit fractal like behaviour, and are a well studied family of fractal sets (see \cite{Fal}). 
	
	Given an IFS $\{S_i\}_{i\in \I}$, for each $i\in \I$ we let $T_{i}:\mathbb{R}^d\to\mathbb{R}^d$ be the the map given by $T_{i}(x)=S_{i}^{-1}(x)$. It follows from the definition of a self-affine set, that for any $x\in X$ there exists $i\in \I$ such that $T_{i}(x)\in X$. When $S_{i}(X)\cap S_{j}(X)=\emptyset$ for all $i\neq j$ this choice of $T_i$ is unique for all $x$. This means that under this assumption we have a well defined map $T:X\to X$ given by $T(x)=T_{i}(x)$ for $x\in S_i(X)$. When there exists distinct $i$ and $j$ satisfying $S_{i}(X)\cap S_{j}(X)\neq\emptyset,$ we say that the iterated function system is overlapping. In this case there exists $x$ for which we have a choice of $T_i$ satisfying $T_{i}(x)\in X$. In other words, there exists $x\in X$ for which we have a choice of forward trajectory. It is these overlapping iterated function systems that will be the main focus of this paper, and for which we will consider our motivating question.
	
	We finish this discussion on iterated function systems, and in particular on overlapping iterated functions systems, by emphasising that the study of these objects is currently a very active area of research. We refer the reader to the articles \cite{Hochman2,Hochman,Shmerkin,Shm,Var2,Var3} for recent advances in the study of overlapping iterated function systems and their associated self-affine sets.


	\subsection{Shrinking targets}	The general framework for shrinking target problems in $\mathbb{R}^d$ is the following: Let $T:X\to X$ be a continuous map defined on some Borel set $X\subset \mathbb{R}^d$. Given a sequence of points $\mathbf{x}=(x_n)_{n=1}^{\infty}\in X^{\N}$ and $(E_n)_{n=1}^{\infty}$ a sequence of Borel subsets of $\mathbb{R}^d,$  we associate the set 
	\[
	W(\mathbf{x},(E_n)):= \{x\in X:T^n(x)\in x_{n}+E_n \textrm{ for i.m. }n\in \mathbb{N}\}.
	\] 
	Here and throughout we use i.m. as a shorthand for infinitely many. Often $(E_n)$ is taken to be a nested sequence of sets containing the origin, and $\x$ is a constant sequence. As such the study of the sets $W(\mathbf{x},(E_n))$ is commonly known as the shrinking target problem. Typically one is interested in establishing measure-theoretic and dimension results for the sets  $W(\mathbf{x},(E_n))$. If one were to equip $X$ with a Borel probability measure $\mu$, one can try to determine $\mu(W(\mathbf{x},(E_n)))$. If $\sum_{n=1}^{\infty}\mu(\{x:T^n(x)\in x_{n}+E_n\})<\infty,$ then it is a simple consequence of the Borel Cantelli lemma that $\mu(W(\mathbf{x},(E_n)))=0.$ If $\sum_{n=1}^{\infty}\mu(\{x:T^n(x)\in x_{n}+E_n\})=\infty,$ then determining $\mu(W(\mathbf{x},(E_n)))$ is a much more challenging problem. Generally speaking, if the dynamical system $T:X\to X$ is mixing sufficiently quickly with respect to $\mu,$ then $\sum_{n=1}^{\infty}\mu(\{x:T^n(x)\in x_{n}+E_n\})=\infty$ implies $\mu(W(\mathbf{x},(E_n)))=1$. Numerous results exist verifying this principle, see for instance \cite{CheKle,Phi}. For more background on shrinking target problems we refer the reader to \cite{ BarTro,CheKle,HillVel,HillVel2,KoLiRa, KoiRam, LLVZW} and the references therein. 
	
	Our framework for studying shrinking targets for overlapping iterated function systems is the following: Given an IFS $\S=\{S_i\}_{i\in \I}$ with self-affine set $X$, a sequence $\x\in X^{\N}$, and a sequence of Borel sets $(E_n)$, we let 
	$$W(\S,\x,(E_n)):=\left\{x\in X:(T_{i_N}\circ \cdots \circ T_{i_1})(x)\in x_{N}+E_{N} \textrm{ for i.m. }(i_1,\ldots,i_N)\in \bigcup_{n=1}^{\infty}\I^n\right\}.$$ 
	
	Recall that to define an IFS we need a finite set of contracting matrices $\A=\{A_i\}_{i\in \I}$ and a finite set of translation vectors $\{t_i\}_{i\in \I}$. Each $S_i$ then satisfies $S_i(x)=A_{i}x+t_i$. As such given an IFS $\S=\{S_i\}_{i\in \I}$ with corresponding set of matrices $\A=\{A_i\}_{i\in \I},$ we can define 
	\begin{equation}	\label{eq:lambda}
		\lambda(\A)=\sum_{i\in \I}|Det(A_i)|.
	\end{equation}	
	We will often suppress the dependence of $\lambda(\A)$ on $\S$ from our notation. For the family of IFSs we will be interested in, the parameter $\lambda(\A)$ will determine whether $X$ typically has positive Lebesgue measure. In the context of shrinking targets, $\lambda(\A)$ will determine the fastest rate\footnote{On the exponential scale.} at which the Lebesgue measure of $E_n$ can converge to zero, yet we could still hope for $W(\S,\x,(E_n))$ to have positive Lebesgue measure (see Theorem \ref{Main thm}, Theorem \ref{exponential theorem}, and Theorem \ref{Convergent sum}). We will introduce some additional notation in the special case when $(E_n)$ is a sequence of balls centred at the the origin: Given an IFS $\S,$ $\mathbf{x}\in X^{\N},$ and $h:\mathbb{N}\to [0,\infty),$ we associate the set
	$$\left\{x\in X:(T_{i_N}\circ \cdots \circ T_{i_1})(x)\in B\left(x_{N}, \left(\frac{h(N)}{\lambda(\A)^N}\right)^{1/d}\right) \textrm{ for i.m. } (i_1\ldots i_N)\in \bigcup_{n=1}^{\infty}\I^n\right\}.$$
	We denote this set by $W(\S,\mathbf{x},h).$ In the special case when $\x=(y)_{n=1}^{\infty}$ is a constant sequence, we adopt the simpler notation $W(\S,y,(E_n))$ for $W(\S,\x,(E_n)),$ and $W(\S,y,h)$ for $W(\S,\mathbf{x},h).$

	
	
	\subsection{Recurrence}	The notion of recurrence is fundamental in dynamical systems. If $T:X\to X$ is a measure preserving transformation of a probability space $(X,\mathcal{B},\mu)$, the famous Poincar\'{e} recurrence theorem states that for any set $A\in\mathcal{B}$ satisfying $\mu(A)>0$, we have that $\mu$-almost every $x\in A$ satisfies $T^{n}(x)\in A$ for infinitely many $n\in \N$ (see \cite{Wal} for a proof of this statement). Under relatively weak assumptions this measure-theoretic statement can be seen to imply a metric one. If $X$ is equipped with a metric $d$ so that $(X,d)$ is separable and $\mathcal{B}$ is the Borel $\sigma$-algebra, then Poincar\'{e}'s recurrence theorem implies that for $\mu$-almost every $x\in X$ we have $$\liminf_{n\to\infty} d(T^n(x),x)=0.$$ It is natural to wonder whether this metric statement can be further improved upon and replaced with something more quantitative. With this goal in mind the following framework is natural: Let $X$ and $T$ be as above. Given a function $\psi:\N\to [0,\infty)$, we define the set of points that return at the rate $\psi:$
	\[
	R(\psi):=\{x\in X:d(T^n(x),x)\leq \psi(n)\,\textrm{ for i.m. }n\in \N\}.
	\]
	Just as for shrinking target sets, we are typically interested in establishing measure-theoretic and dimension results for the sets $R(\psi)$. The first result in this direction is due to Boshernitzan \cite{Bos}. He proved that if $(X,d)$ is a separable metric space, $\mu$ is a Borel probability measure, and $T:X\to X$ is a measure preserving transformation, then if the $\alpha$-dimensional Hausdorff measure is $\sigma$-finite on $(X,d)$, for $\mu$-almost every $x\in X$ we have 
	\[
	\liminf_{n\to\infty} n^{1/\alpha}d(T^n(x),x)<\infty.
	\]
	Moreover, if we also assume that $\mathcal{H}^{\alpha}(X)=0$, then Boshernitzan proved that for $\mu$-almost every $x\in X$ we have $$\liminf_{n\to\infty} n^{1/\alpha}d(T^n(x),x)=0.$$ The $\mu$-almost everywhere rate of recurrence Boshernitzan's result provides only depends upon the metric properties of the set $X$. However, it is natural to expect that the measure $\mu$ would also influence the recurrence behaviour of a $\mu$-typical point. This issue was addressed in a paper of Barreira and Saussol \cite{BarSau}. They proved that if $T:X\to X$ is a Borel measurable map defined on $X\subset\mathbb{R}^d$,  and $\mu$ is a $T$-invariant probability measure, then for $\mu$-almost every $x\in X$ we have $$\liminf_{n\to\infty}n^{1/\alpha}d(T^n(x),x)=0\textrm{ for any }\alpha>\liminf_{r\to 0}\frac{\log \mu(B(x,r))}{\log r}.$$  More recently, a number of papers have appeared that bring the quantitative recurrence theory more closely in line with the shrinking target theory. In particular, it has been shown that $\mu(R(\psi))$ is related to the convergence/divergence properties of $\sum_{n=1}^{\infty}\psi(n)^{\delta}$ for some appropriate $\delta>0$  (see \cite{BakFar,BarTro,ChangWuWu,HLSW,KKP,KleZhe}). A common assumption appearing in each of these papers was a uniformity assumption on the local behaviour of the measure $\mu$. It turns out that this assumption is essential if one wants $\mu(R(\psi))$ to be related to the convergence/divergence properties of $\sum_{n=1}^{\infty}\psi(n)^{\delta}$. In a recent paper together with Allen and B\'ar\'any \cite{ABB}, the first author proved that if $T:X\to X$ is the left shift on a topologically mixing subshift of finite type and $\mu$ is the Gibbs measure for some H\"{o}lder continuous potential satisfying a weak non-degeneracy condition, then there exists $\psi$ for which $\mu(R(\psi))$ is not determined by the convergence/divergence properties of $\sum_{n=1}^{\infty}\psi(n)^{\delta}$ for any $\delta$.
	
	For iterated function systems we study the following recurrence sets. Given an IFS $\S$ and a sequence of Borel sets $(E_n)$, we let 
	$$R(\S,(E_n)):=\left\{x\in X:(T_{i_N}\circ \cdots \circ T_{i_1})(x)\in x+E_N \textrm{ for i.m. } (i_1\ldots i_N)\in \bigcup_{n=1}^{\infty}\I^n \right\}.$$
	We also introduce the following more specific framework when our recurrence neighbourhoods are balls. Given an IFS $\S$ and $h:\mathbb{N}\to [0,\infty),$ we let $$R(\S,h):=\left\{x\in X:(T_{i_N}\circ \cdots \circ T_{i_1})(x)\in B\left(x, \left(\frac{h(N)}{\lambda(\A)^N}\right)^{1/d}\right)\, \textrm{ for i.m. } (i_1\ldots i_N)\in \bigcup_{n=1}^{\infty}\I^n\right\}.$$ Just as for the shrinking target problem, the parameter $\lambda(\A)$ plays an important role in the study of quantitative recurrence. On the exponential scale it describes the best rate of recurrence we could hope to observe for a Lebesgue typical point (see Theorem \ref{Main thm} and Theorem \ref{exponential theorem}). 
	
	\section{Statements of results}\label{sec:statements}
	We begin this section by introducing some more notation relating to iterated function systems. Given a finite set of invertible $d\times d$ matrices $\A=\{A_i\}_{i\in \I}$ satisfying $\|A_i\|<1$ for all $i\in \I$, one can associate a parameterised family of iterated function systems in the following way. To each $\tt=(t_1,\ldots, t_{\I})\in\mathbb{R}^{\#\I\cdot d}$ we associate the IFS  $\mathcal{S}_{\tt}=\{S_{i,\tt}(x)=A_ix+t_i\}_{i\in \I}.$ We also let $T_{i,\tt}=S_{i,\tt}^{-1}$ for each $i\in \I$ and $\tt$. We denote the corresponding self-affine set by $X_{\tt}.$ To each $\tt\in \mathbb{R}^{\# \I\cdot d}$ we associate a surjective projection map $\pi_{\tt}:\I^{\N}\to X_{\tt}$ given by $$\pi_{\tt}(\i)=\lim_{n\to\infty}(S_{i_1,\tt}\circ \cdots \circ S_{i_{n},\tt})(0)=t_{i_1}+\sum_{n=1}^{\infty}(A_{i_1}\circ \cdots \circ A_{i_{n}})t_{i_{n+1}}.$$ When we equip $\I^{\N}$ with the product topology then $\pi_{\tt}$ is also continuous. 
	
	This parameterised family of IFSs was originally studied by Falconer \cite{Fal2}. Under the assumption that $\|A_i\|<1/3$ for all $i\in \I$, he showed that for Lebesgue almost every $\tt$ the Hausdorff dimension of $X_{\tt}$ is equal to the affinity dimension of $\A$. See \cite{Fal2} for the definition of the affinity dimension. Similarly, he proved that when the parameter $\lambda(\A)$ from \eqref{eq:lambda} is strictly greater than $1$, then $X_{\tt}$ has positive Lebesgue measure for Lebesgue almost every $\tt$. Falconer's results were extended by Solomyak in \cite{Sol2} to the case where the set of matrices satisfies the weaker assumption that  $\|A_i\|<1/2$ for all $i\in \I$. For a set of matrices with $\lambda(\A)>1$, so that $X_\tt$ typically has positive Lebesgue measure, 
	it is natural to then ask whether $X_\tt$ also has interior points. Not much is known about this, but very recently, sufficient conditions for $X_\tt$ to almost surely have non-empty interior were given in \cite{FenFen}.

	We are interested in proving results on the Lebesgue measure of shrinking target sets and quantitative recurrence sets for $\S_{\tt}$ that hold for Lebesgue almost every $\tt$. Proving statements that only hold for Lebesgue almost every $\tt$ might at first appear to be unnatural. Particularly when compared to the traditional shrinking target framework where every point in the domain has a unique forward orbit. However, Theorem \ref{Exact overlap} and the discussion after its proof demonstrate that such a restriction is entirely necessary. Often there exists a dense set of $\tt$ for which the corresponding shrinking target sets and quantitative recurrence sets exhibit different behaviour to that observed for a Lebesgue typical $\tt$. 
	
	
	
	To prove positive measure results for $W(\S_{\tt},\mathbf{x},h)$ and $R(\S_{\tt},h)$ it is necessary for us to impose the following additional assumptions on the function $h$. Given $B\subset \N$ we define the {\it upper density} of $B$ to be $$\overline{d}(B):=\limsup_{n\to\infty}\frac{\#\{1\leq j\leq n:j\in B\}}{n}.$$ Given $\epsilon>0$ we let $$H_{\epsilon}:=\left\{h:\N\to [0,\infty):\sum_{n\in B}h(n)=\infty, \forall B\subset \N\, \textrm{ such that }\, \overline{d}(B)>1-\epsilon\right\}.$$ We then define 
	\begin{equation}\label{eq:H}
		H=\bigcup_{\epsilon>0}H_{\epsilon}.
	\end{equation} 
	We emphasise that $h:\mathbb{N}\to [0,\infty)$ given by $h(n)=1/n$ is contained in $H$. The reader may find it instructive to keep this example in mind. We say that $h:\mathbb{N}\to [0,\infty)$ is {\it decaying regularly} if  $$\inf_{n\in\mathbb{N}}\frac{h(n+1)}{h(n)}>0.$$ 
	With all of this terminology established we can now state our results.
	\begin{thm}
		\label{Main thm}
		Let $\A=\{A_i\}_{i\in \I}$ be a finite set of invertible $d\times d$ matrices satisfying the following properties:
		\begin{itemize}
			\item There exists a positive diagonal matrix $A$ such that $A_i=A$ for all $i\in \I$.
			\item $\|A\|<1/2$.
			\item $\lambda(\A)>1$.
		\end{itemize} Then the following statements are true:
		\begin{enumerate}
			\item  For Lebesgue almost every $\tt\in \R^{\#\I \cdot d}$, for any $\x\in X_{\tt}^{\N}$ and $h\in H$ we have that $W(\mathcal{S}_{\tt},\x,h)$ has positive Lebesgue measure.
			\item  For Lebesgue almost every $\tt\in \R^{\#\I \cdot d}$, for any $x\in X_{\tt}$ and $h\in H$ that is decaying regularly and decreasing, we have that Lebesgue almost every $y\in X_{\tt}$ is contained in $W(\mathcal{S}_{\tt},x,h)$.
			\item Let $U$ be an open subset of $\mathbb{R}^{\# \I \cdot d}$ and $\i\in \I^{\N}$. Assume that $\pi_{\tt}(\i)\in int(X_{\tt})$ for Lebesgue almost every $\tt\in U$. Then for Lebesgue almost every $\tt\in U$, for any $h\in H$ that is decaying regularly and decreasing, we have that Lebesgue almost every $x\in X_{\tt}$ is contained in $$\left\{x: \exists \j\in\I^{\N} \textrm{ such that }(T_{j_{n},\tt}\circ \cdots \circ T_{j_1,\tt})(x)\in B\left(\pi_{\tt}(\i),\left(\frac{h(n)}{ \lambda(\A)^{n}}\right)^{1/d}\right)\textrm{ for i.m. } n\in \N\right\}.$$
			\item For Lebesgue almost every $\tt\in \R^{\#\I \cdot d}$, for any $h\in H$ we that $ R(\mathcal{S}_{\tt},h)$ has positive Lebesgue measure. 		
		\end{enumerate}
	\end{thm}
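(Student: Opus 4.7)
The plan is to exploit the fact that the common linear part $A$ makes each cylinder map factor as $S_{i_1,\tt}\circ\cdots\circ S_{i_N,\tt}(x) = A^N x + c(\i,\tt)$, where $c(\i,\tt) = \sum_{k=0}^{N-1} A^k t_{i_{k+1}}$ depends linearly on $\tt$. I would begin by rewriting membership in $W(\S_\tt,\x,h)$ as $x\in\limsup_N V_N(\tt)$, where $V_N(\tt) := \bigcup_{\i\in\I^N}\bigl(A^N x_N + A^N E_N + c(\i,\tt)\bigr)$ and $E_N = B\bigl(0,(h(N)/\lambda(\A)^N)^{1/d}\bigr)$. Since $|{\det A}|^N = \lambda(\A)^N/(\#\I)^N$, each ellipsoid $A^N E_N$ has volume comparable to $h(N)/(\#\I)^N$, so summing over $\i\in\I^N$ yields a naive total comparable to $h(N)$; the definition of $H$ is then precisely what guarantees that $\sum_N h(N)$ diverges robustly after removing any set of upper density less than $\epsilon$, which is the right form of divergence for a Borel--Cantelli argument along sub-sequences of $N$.

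The technical core of parts (1) and (4) will be a Falconer--Solomyak transversality estimate yielding the second moment bound
\[
\int_U \mathrm{Leb}\bigl(V_N(\tt)\cap V_M(\tt)\cap Y\bigr)\,d\tt \;\lesssim\; \int_U \mathrm{Leb}(V_N(\tt)\cap Y)\,d\tt \cdot \int_U \mathrm{Leb}(V_M(\tt)\cap Y)\,d\tt,
\]
for a bounded open $U\subset\R^{\#\I\cdot d}$ and a compact reference set $Y$ chosen so that $Y\subset X_\tt$ for a.e. $\tt\in U$. By Fubini this reduces to estimating, for distinct $\i,\i'$, the $\tt$-measure of the set where $c(\i,\tt) - c(\i',\tt)$ falls in a small ellipsoid. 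Here I would invoke Solomyak transversality: writing $c(\i,\tt) - c(\i',\tt) = \sum_k A^k (t_{i_{k+1}} - t_{i'_{k+1}})$ and splitting off the first discrepant coordinate, the hypothesis $\|A\|<1/2$ controls the tail and gives a uniform lower bound on the relevant Jacobian. Coupling this second moment bound with the divergence of first moments via Chung--Erd\H{o}s would then give $\mathrm{Leb}(\limsup_N V_N(\tt)\cap Y) > 0$ on a positive-measure set of $\tt\in U$; Fubini together with exhaustion of the parameter space by open $U$ upgrades this to Lebesgue-a.e.\ $\tt$. To handle ``for any $\x$'' uniformly over a.e.\ $\tt$, I would use a diagonal argument over a countable dense family of symbolic sections $\omega\mapsto\pi_\tt(\omega)$, noting that the transversality constants are uniform in the target.

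For parts (2) and (3) I would then sharpen positive measure to full Lebesgue measure in $X_\tt$. The key additional observation is the forward invariance $S_{j,\tt}(W(\S_\tt,x,h)) \subset W(\S_\tt,x,h)$, which is immediate from the definition. A zero-one law driven by the ergodicity of the uniform Bernoulli measure on $\I^\N$, combined with the absolute continuity of its pushforward $\mu_\tt$ on $X_\tt$ (from Solomyak's theorem), should transport full measure from the symbolic side to $X_\tt$; the regular decay and monotonicity assumed of $h$ are what provide the geometric comparability needed in this step. Part (3) would then follow from part (2), since $\pi_\tt(\i)\in\mathrm{int}(X_\tt)$ guarantees that small balls around the target are eventually contained in $X_\tt$, so the argument applies verbatim along the fixed symbolic route $\j$. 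For part (4) on recurrence, the condition $(T_{i_N,\tt}\circ\cdots\circ T_{i_1,\tt})(x)\in B(x,r_N)$ rewrites as $(I-A^N)x - c(\i,\tt)\in A^N B(0,r_N)$, and since $\|A\|<1$ the map $x\mapsto (I-A^N)x$ is uniformly invertible; this reduces recurrence to the shrinking target framework of the previous paragraph up to a bounded Jacobian correction, yielding positive Lebesgue measure of $R(\S_\tt,h)$.

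The hardest step will be the second moment estimate, where transversality of $\tt\mapsto c(\i,\tt) - c(\i',\tt)$ must be quantitatively uniform across pairs $(\i,\i')$ of possibly very different lengths $N\neq M$, and this is precisely the regime where the strict inequality $\|A\|<1/2$ (rather than merely $\|A\|<1$) is decisive. A secondary difficulty is propagating from positive measure to full measure in parts (2)--(3), since the natural ergodicity statement lives on $\I^\N$ and must be pushed down through the merely $L^2$-regular density $\phi_\tt$, which is where the regularity hypotheses on $h$ should be spent.
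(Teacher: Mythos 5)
Your outline contains genuine gaps, the most serious of which is structural and concerns part (1). Your second-moment bound is an estimate \emph{integrated over} $\tt$, but the Chung--Erd\H{o}s/Kochen--Stone inequality (Lemma~\ref{Quasi-independence on average}) must be applied for a \emph{fixed} $\tt$: from the integrated bound you cannot extract, for almost every (or even for one explicit positive-measure set of) $\tt$, simultaneously the divergence of $\sum_N\L_{d}(V_N(\tt)\cap Y)$ and the pair-correlation bound for that same $\tt$. Note that for fixed $\tt$ the union bound only gives $\L_{d}(V_N(\tt))\ll h(N)$ from above; the matching lower bound genuinely fails on a dense set of parameters (Theorem~\ref{Exact overlap}), so pointwise divergence of first moments is not free. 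For the same reason your final step ``Fubini together with exhaustion by open $U$ upgrades a positive-measure set of good $\tt$ to a.e.\ $\tt$'' is invalid: there is no zero--one law in the parameter, and the exceptional set is dense. The paper's route is different in exactly this respect: the integrated pair-counting estimate (Lemma~\ref{Counting pairs}, via Lemma~\ref{Transverality estimate}) is fed into Lemma~\ref{Density lemma} to produce, for a.e.\ $\tt$, constants $c,s>0$ and a set of scales $n$ of upper density at least $1-\epsilon$ at which a proportion $c$ of the $\#\I^n$ translated ellipses are pairwise \emph{disjoint} (Lemma~\ref{All sequences}); only then, for this fixed typical $\tt$, are the disjoint subfamilies $Z_n$ built and the hypotheses of Lemma~\ref{Quasi-independence on average} verified deterministically by volume-counting. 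This is also precisely where the upper-density definition of $H$ is spent; your plan has no step producing such a density-$(1-\epsilon)$ set of well-separated scales. Finally, the uniformity ``for every $\x\in X_{\tt}^{\N}$'' is not obtained via a countable dense family (the targets shrink with $N$ and the family of sequences is uncountable); the paper instead uses that all maps share the linear part $A^n$, so $\{S_{\i,\tt}(x_n)\}_{\i\in\I^n}$ is a translate of one reference configuration and the separation statement transfers verbatim to all sequences at once.

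For parts (2)--(3) there are two further gaps. First, the inclusion $S_{j,\tt}(W(\S_{\tt},x,h))\subset W(\S_{\tt},x,h)$ is not ``immediate from the definition'': pushing forward by $S_{\j,\tt}$ turns a hit at time $|\k|$ with radius $(h(|\k|)/\lambda(\A)^{|\k|})^{1/d}$ into a hit at time $|\k|+|\j|$ with the \emph{same} radius, which must be compared with $(h(|\k|+|\j|)/\lambda(\A)^{|\k|+|\j|})^{1/d}$; the paper first proves that scaling the radii by an arbitrary constant does not change the measure (Lemma~\ref{arbitrary constant lemma}, via the covering Lemma~\ref{5r covering lemma} and Theorem~\ref{Lebesgue density theorem}) and only then absorbs the constant using regular decay. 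Second, and more seriously, your mechanism for upgrading positive to full measure --- ergodicity of the Bernoulli measure plus absolute continuity of its pushforward $\mu_{\tt}$ --- cannot work: absolute continuity gives $\mu_{\tt}\ll\L_{d}$, whereas transferring a $\mu_{\tt}$-full statement to a Lebesgue-full statement on $X_{\tt}$ requires $\L_{d}|_{X_{\tt}}\ll\mu_{\tt}$, i.e.\ equivalence, which is unavailable here (the paper only has it in the special Garsia setting via Mauldin--Simon); moreover the symbolic preimage of the relevant set is invariant under \emph{prepending} symbols, not shift-invariant, so no ergodic zero--one law applies directly. The paper avoids the symbolic route altogether, using Shmerkin's differentiation regularity (Lemma~\ref{Shmerkin lemma}) and the density-basis argument of Lemma~\ref{Full measure images}. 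Your reduction of part (4) to the shrinking-target setting via invertibility of $I-A^N$ does match the paper's Lemma~\ref{recurrence to shrinking}, but it inherits the part-(1) gap described above.
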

	We emphasise that in our second and third statements we are only considering targets centred at a single point. Moreover, the third statement only applies when $int(X_\tt)\neq \emptyset$ for Lebesgue almost every $\tt$ belonging to the open subset $U$. The significance of our third statement is that it gives sufficient conditions so that infinitely many targets are hit if we only consider the orbit arising from a single sequence $\j\in \I^{\N}$. 

	For a general set of matrices $\A$ satisfying $\|A_i\|<1/2$ for all $i\in \I,$ and for a general sequence of Borel sets $(E_n)$, we are not able to prove the appropriate analogue of Theorem \ref{Main thm}. However, if we weaken our expectations and restrict to sets $(E_n)$ with Lebesgue measure satisfying $\L_{d}(E_n)=\lambda(\A)^{-n},$ then we are able to prove a positive result. Importantly, this result also holds for a more general set of matrices.  This is the content of Theorem \ref{exponential theorem} below. In the statement of this theorem we will make use of the following notation. Given $C>0$ we let $W(\S,\x,C)$ and $R(\S,C)$ denote the shrinking target set and recurrent set corresponding to the constant function $h(n)=C$. Before stating Theorem \ref{exponential theorem} we define a useful notion that will be used in the proofs of Theorem \ref{Main thm} and Theorem \ref{exponential theorem}. This notion will allow us to upgrade certain positive measure statements to full measure statements.
	
	We call a family of open sets $\mathcal{V}$ for which the following properties hold a {\it density basis} for a Borel set $X$:
	\begin{itemize}
		\item For all $x\in X$ there are arbitrarily small $V\in \mathcal{V}$ containing $x.$
		\item For any Borel set $A\subset X$ we have $$\lim_{V\to x, x\in V\in\mathcal{V}}\frac{\L_{d}(V\cap A)}{\L_{d}(V)}=\chi_{A}(x)\, \textrm{ for Lebesgue almost every }x\in\mathbb{R}^d.$$
	\end{itemize}
	By $V\to x$ we mean that $\textrm{diam}(V\cup \{x\})\to 0$. Let $X$ be the self-affine set of an iterated function system. Assume that $\L_{d}(X)>0$. We say that the self-affine set $X$ is {\it differentiation regular} if there exists a density basis $\mathcal{V}$ for $X$ and a constant $\eta>0,$ such that for every $x\in X$ there exists a sequence $\{V_j(x)\}$ in $\mathcal{V}$ for which the following properties are satisfied:
	\begin{itemize}
		\item $x\in V_{j}(x)$ for all $j.$
		\item $V_{j}(x)\to x$ as $j\to \infty$
		\item For each $V_{j}(x)$ there exists a word $(i_1,\ldots,i_n)$ such that $$(S_{i_1}\circ \cdots \circ S_{i_n})(X)\subset V_{j}(x),\,\, \L_{d}((S_{i_1}\circ \cdots \circ S_{i_n})(X))\geq \eta \L_{d}(V_{j}(x)).$$
	\end{itemize} In \cite{Shm} Shmerkin proved the following statement.
	
	\begin{lemma}
		\label{Shmerkin lemma}
		Let $\{S_i\}_{i\in \I}$ be an iterated function system with self-affine set $X$. Assume that $\L_{d}(X)>0$ and that one of the following properties are satisfied:
		\begin{enumerate}
			\item $d=2$ and all the matrices $\{A_i\}$ are equal.
			\item All the matrices $A_i$ are simultaneously diagonalisable. 
			\item There is a finite set $W\subset \R^d$ of at least $d$ linearly independent elements such that $A_i(W)\subset W$ for all $i\in \I$. 
		\end{enumerate}
		Then $X$ is differentiation regular.
	\end{lemma}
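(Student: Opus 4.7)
The plan is to construct, in each of the three cases, a family $\mathcal{V}$ of open parallelepipeds that simultaneously serves as a density basis for $X$ and has, around every point $x\in X$, a shrinking sequence of elements each enclosing a cylinder of comparable Lebesgue measure. To start, I would fix a closed parallelepiped $P\supset X$ whose edge directions are adapted to the case at hand, and set $\eta_0=\L_{d}(X)/\L_{d}(P)>0$. For any finite word $u=(i_1,\ldots,i_n)$, let $S_u=S_{i_1}\circ\cdots\circ S_{i_n}$ and $P_u=S_u(P)$; then $S_u(X)\subset P_u$, and since both sets carry the same Jacobian factor $|\det(A_{i_1}\cdots A_{i_n})|$, one has
\[
\L_{d}(S_u(X))=\eta_0\,\L_{d}(P_u).
\]
My candidate density basis $\mathcal{V}$ would consist of small open thickenings $P_u^{\varepsilon}$, with $\varepsilon$ chosen so small (depending on $|u|$) that $\L_{d}(P_u^{\varepsilon})\leq 2\L_{d}(P_u)$. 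With $\eta=\eta_0/2$, the third bullet in the definition of differentiation regularity is then built in: for any $x\in X$, picking a symbolic coding $\i=(i_1,i_2,\ldots)\in\I^{\N}$ with $\pi(\i)=x$ and setting $V_j(x)=P_{(i_1,\ldots,i_j)}^{\varepsilon_j}$ gives $x\in S_{(i_1,\ldots,i_j)}(X)\subset V_j(x)$, $\operatorname{diam}(V_j(x))\to 0$, and the enclosed cylinder $S_{(i_1,\ldots,i_j)}(X)$ has Lebesgue measure at least $\eta\,\L_{d}(V_j(x))$.

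The substance of the argument lies in verifying that $\mathcal{V}$ is a density basis. The unifying observation is that in each of the three cases the bounding parallelepipeds $P_u$ have edges lying in a fixed finite set of directions in $\R^d$. In Case~(ii), after a linear change of coordinates that diagonalises the matrices $A_i$ simultaneously, each $P_u$ becomes an axis-parallel rectangle, and the Jessen--Marcinkiewicz--Zygmund theorem on the strong maximal operator in $\R^d$ delivers the density basis property. Case~(iii) is analogous: choosing $P$ with edges among $d$ linearly independent vectors of $W$, the invariance $A_i(W)\subset W$ ensures that every edge of every $P_u$ lies in the finite set of directions determined by $W$, and the maximal operator along any finite collection of fixed directions is bounded on $L^p$ for $p>1$ by iterating the one-dimensional Hardy--Littlewood bound over finitely many coordinate systems. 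In Case~(i), with $d=2$ and all $A_i=A$, if $A$ is diagonalisable over $\R$ we reduce to Case~(ii); otherwise $A$ is a rotation-scaling whose iterates $A^n(P)$ generate only a countable family of directions in the plane, to which a two-dimensional differentiation theorem in the spirit of C\'ordoba--Fefferman applies.

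The main obstacle is precisely Case~(i) with complex eigenvalues. There the parallelograms $A^n(P)$ have eccentricity growing geometrically like $(|\lambda_1|/|\lambda_2|)^n$ while their directions rotate by $n\theta$, so bounded-eccentricity and Besicovitch-type covering arguments are unavailable. The resolution would have to exploit the arithmetic structure of the angles $n\theta$ --- they all arise from iteration of a single linear map, so two transverse directions remain available at every scale --- in order to dominate the full maximal operator by a two-directional strong maximal operator and hence differentiate indicator functions of Borel subsets of $X$.
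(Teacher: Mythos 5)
The paper itself offers no proof of this lemma: it is quoted verbatim from Shmerkin [Shm], so the only possible comparison is with the standard argument behind that citation. Your general scheme is in that spirit and is sound as far as it goes: enclosing each cylinder $S_u(X)$ in the parallelepiped $P_u=S_u(P)$, noting $\L_{d}(S_u(X))=\eta_0\,\L_{d}(P_u)$ because both carry the Jacobian $|\det A_u|$, and reducing the density-basis property to classical differentiation theorems. In cases (2) and (3) the reduction works: after one linear change of variables (case (2)), or finitely many, one for each frame of $d$ linearly independent directions coming from $W$ (case (3)), the sets $P_u$ lie in the strong basis of axis-parallel boxes, the Jessen--Marcinkiewicz--Zygmund theorem (equivalently, $L^p$-boundedness of the iterated maximal operator for $p>1$) differentiates bounded functions, hence indicator functions, and a subfamily of a density basis inherits the density property at every point admitting arbitrarily small members. (Two small caveats: condition (3) must be read projectively, as a finite invariant set of \emph{directions} --- taken literally, $\|A_i\|<1$ together with $A_i(W)\subset W$ and $W$ finite forces $W\subseteq\{0\}$; and in case (2) one should decide whether diagonalisability is over $\R$ or $\C$, though the complex case reduces to products of intervals and two-dimensional quasi-balls and is also harmless.)

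The genuine gap is case (1), and your analysis of it is off in two ways. First, the subcase you single out as the main obstacle --- complex eigenvalues --- is actually the easy one: the non-real eigenvalues of a real $2\times 2$ matrix are complex conjugates and therefore have \emph{equal} modulus, so the eccentricity of $A^n(P)$ does not grow at all; $A$ is conjugate to $\rho R_\theta$, the sets $A^n(P)$ are comparable to balls of radius $\asymp\rho^n$ up to a fixed distortion, and the ordinary Lebesgue density theorem suffices, with no Córdoba--Fefferman-type input and no arithmetic of the angles $n\theta$. (Also, "countably many directions" is not by itself a sufficient hypothesis for differentiation along rectangles; lacunarity-type conditions are needed in general, so that appeal would not be a valid fallback.) Second, you omit the subcase that is genuinely delicate and is the reason the hypothesis is restricted to $d=2$: $A$ real, a single eigenvalue, not diagonalisable (a Jordan block). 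Then $A^n(P)$ is a sheared parallelogram of eccentricity $\asymp n$ whose long direction approaches the eigendirection at rate $\asymp 1/n$; it fills only a $\asymp 1/n$ fraction of its axis-parallel bounding box, so there is no reduction to the strong basis or to any fixed finite family of frames, and the direction set is not lacunary, so the standard directional maximal theorems do not apply either. What is needed there is a differentiation theorem for the planar basis generated by translates of the iterates $A^n(Q)$ of a single linear map, which is exactly the input used in Shmerkin's treatment; your proposal leaves this point unproved, as your final paragraph in effect concedes.
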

	Equipped with the notion of differentiation regular we can now state Theorem \ref{exponential theorem}.

	\begin{thm}
		\label{exponential theorem}
		Let $\A=\{A_i\}_{i\in \I}$ be a finite set of invertible $d\times d$ matrices satisfying the following properties:
		\begin{itemize}
			\item $\|A_i\|<1/2$ for all $i\in \I$.
			\item $\lambda(\A)>1$.
		\end{itemize}
		Then the following statements are true:
		\begin{enumerate}
			\item  Let $(i_{n,m})_{(n,m)\in \N\times \N}\in \I^{\N\times \N}$ be the sequence of centres for the shrinking targets, and $(E_n)$ be a sequence of Borel sets satisfying the following properties:
			\begin{itemize}
				\item There exists $Q>0$ such that $E_n\subset [-Q,Q]^d$ for all $n\in \N$.
				\item $\L_{d}(E_n)=\lambda(\A)^{-n}$ for all $n.$
				\item For all $s\in(0,1)$ and $n\in \N$ we have $s\cdot E_n\subset E_n$.
				\item There exists $C>0$ such that for each $n\in \N$ there exists $r_n>0$ satisfying  $\L_{d}\left([-r_n,r_n]^d\pm E_n\right)\leq C\L_{d}(E_n)$. 
			\end{itemize} Then for Lebesgue almost every $\tt\in \R^{\# \I\cdot d},$ we have
			$$\limsup_{n\to\infty}\L_{d}\left(\left\{x:\exists (i_1,\ldots,i_n)\in \I^n \textrm{ such that } (T_{i_n,\tt}\circ \cdots \circ T_{i_1,\tt})(x)\in \pi_{\tt}((i_{n,m})_{m=1}^{\infty})+E_n\right\}\right)>0.$$ Moreover, for Lebesgue almost every $\tt\in \R^{\# \I\cdot d},$ the set $W(\mathcal{S}_{\tt},(\pi_{\tt}((i_{n,m})_{m=1}^{\infty}))_{n=1}^{\infty},(E_n))$ has positive Lebesgue measure.
			\item Let $\i\in \I^{\N}$ and assume that $X_{\tt}$ is differentiation regular for Lebesgue almost every $\tt\in \R^{\# \I\cdot d}$. Then for Lebesgue almost every $\tt\in \R^{\# \I\cdot d},$ Lebesgue almost every $x\in X_{\tt}$ is contained in the set
			$$\bigcup_{C>0}W(S_{\tt},\pi_{\tt}(\i),C).$$
			\item  Let $(E_n)$ be a sequence of Borel sets satisfying the following properties:
			\begin{itemize}
				\item There exists $Q>0$ such that $E_n\subset [-Q,Q]^d$ for all $n\in \N$.
				\item $\L_{d}(E_n)=\lambda(\A)^{-n}$ for all $n.$
				\item For all $s\in(0,1)$ and $n\in \N$ we have $s\cdot E_n\subset E_n$.
				\item There exists $C>0$ such that for each $n\in \N$ there exists $r_n>0$ satisfying  $\L_{d}\left([-r_n,r_n]^d\pm E_n\right)\leq C\L_{d}(E_n)$.
			\end{itemize}
			Then for Lebesgue almost every $\tt\in \R^{\# \I\cdot d},$ we have
			$$\limsup_{n\to\infty}\L_{d}\left(\left\{x:\exists (i_1,\ldots,i_n)\in \I^n \textrm{ such that } (T_{i_n,\tt}\circ \cdots \circ T_{i_1,\tt})(x)\in x+E_n\right\}\right)>0.$$ Moreover, for Lebesgue almost every $\tt\in \R^{\# \I\cdot d},$ the set $R(\mathcal{S}_{\tt},(E_n))$ has positive Lebesgue measure. 
		\end{enumerate}
	\end{thm}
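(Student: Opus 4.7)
The plan is to prove all three parts via a Fubini and second-moment computation over the translation parameter $\tt$. For a word $\i = (i_1, \ldots, i_n) \in \I^n$, set $M_\i = A_{i_1} \cdots A_{i_n}$ and $v_{\i,\tt} = (S_{i_1,\tt} \circ \cdots \circ S_{i_n,\tt})(0)$. Inverting the defining condition, the event for $x$ at level $n$ is equivalent to $x \in \phi_{\i,\tt}(E_n)$ for an affine map with Jacobian comparable to $|\det M_\i|$: in Part 1 one has $\phi_{\i,\tt}(y) = v_{\i,\tt} + M_\i(y + \pi_\tt((i_{n,m})_m))$; in Part 3 the dependence of the target on $x$ itself produces $\phi_{\i,\tt}(y) = (I - M_\i)^{-1}(v_{\i,\tt} + M_\i y)$, well defined since $\|A_i\|<1/2$ forces $|\det(I - M_\i)|$ to stay bounded away from $0$ uniformly in $\i$ and $n$. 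Writing $B_n(\tt) := \bigcup_{\i \in \I^n}\phi_{\i,\tt}(E_n)$, one computes
$$\sum_{\i \in \I^n} \L_d(\phi_{\i,\tt}(E_n)) \asymp \sum_\i |\det M_\i|\,\L_d(E_n) = \lambda(\A)^n \cdot \lambda(\A)^{-n} = 1,$$
so the total candidate mass at every scale is of order one.

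\textbf{Parts 1 and 3.} Fix a compact $K \subset \R^{\#\I \cdot d}$ of positive measure. By Cauchy--Schwarz applied to $\sum_\i \chi_{\phi_{\i,\tt}(E_n)}$ tested against $\chi_{B_n(\tt)}$,
$$\L_d(B_n(\tt)) \geq \frac{\big(\sum_\i \L_d(\phi_{\i,\tt}(E_n))\big)^2}{\sum_{\i,\j \in \I^n} \L_d(\phi_{\i,\tt}(E_n) \cap \phi_{\j,\tt}(E_n))},$$
and a second Cauchy--Schwarz on the $\tt$-integral reduces the target $\int_K \L_d(B_n(\tt))\,d\tt \geq c_K > 0$ to showing $\int_K D_n(\tt)\,d\tt \lesssim 1$ uniformly in $n$, where $D_n(\tt)$ denotes the denominator above. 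The diagonal terms $\i = \j$ contribute $\asymp \L_d(K)$; the off-diagonal terms require a transversality estimate, where by Fubini one must bound, for each fixed $x$, the $\tt$-volume of parameters in $K$ for which $x$ lies simultaneously in $\phi_{\i,\tt}(E_n)$ and $\phi_{\j,\tt}(E_n)$. Since $v_{\i,\tt} - v_{\j,\tt}$ depends linearly on $\tt$ with non-degenerate derivative in the direction of the first coordinate where $\i$ and $\j$ disagree, a linear change of variables localises the question to the Lebesgue measure of a small thickening of $E_n$, and the fourth hypothesis on $(E_n)$ (tailored precisely to yield $\L_d([-r_n,r_n]^d \pm E_n) \lesssim \L_d(E_n)$) is used here. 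Once $\int_K D_n\,d\tt \lesssim 1$ is established, reverse Fatou (with the uniform dominating constant coming from the compact containment of every $B_n(\tt)$ forced by $E_n \subset [-Q,Q]^d$), followed by a covering of $\R^{\#\I \cdot d}$ by countably many such $K$'s, yields $\limsup_n \L_d(B_n(\tt)) > 0$ for almost every $\tt$; the positive-measure conclusions for $W(\S_\tt,\dots,(E_n))$ and $R(\S_\tt,(E_n))$ then follow from the general set-theoretic inequality $\L_d(\limsup_n B_n) \geq \limsup_n \L_d(B_n)$. The main obstacle will be controlling the off-diagonal sum in the regime where the common prefix of $\i$ and $\j$ is long, whose natural remedy is to stratify the double sum by common-prefix length and sum geometrically.

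\textbf{Part 2.} Given Part 1, for each $C > 0$ the set $W_C := W(\S_\tt, \pi_\tt(\i), C)$ has positive Lebesgue measure for a.e. $\tt$. From the definition $W_C$ is invariant under every $S_{j,\tt}$ in the sense $(S_{j_1,\tt} \circ \cdots \circ S_{j_n,\tt})(W_C) \subset W_C$: prepending letters to a witnessing sequence still witnesses the condition. Hence every cylinder $(S_{j_1,\tt} \circ \cdots \circ S_{j_n,\tt})(X_\tt)$ contains a relative $\L_d$-density of at least $\rho := \L_d(W_C)/\L_d(X_\tt) > 0$ of $W_C$. If $\L_d(X_\tt \setminus W_C) > 0$, pick a Lebesgue density point $x_0$ of the complement and sets $V_j \ni x_0$ from the differentiation-regular density basis with $V_j \to x_0$, each containing a cylinder of $\L_d$-proportion $\geq \eta$; then $\L_d(V_j \cap W_C)/\L_d(V_j) \geq \eta\rho > 0$, contradicting that the $W_C$-density at $x_0$ vanishes. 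Hence $W_C$ has full measure in $X_\tt$. Intersecting the Part 1 null sets over $C \in \N$ produces a single null set outside of which $\bigcup_{C \in \N} W_C \subset \bigcup_{C > 0} W_C$ has full measure in $X_\tt$, completing Part 2.
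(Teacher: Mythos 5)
Your blueprint for Parts 1 and 3 --- Fubini over $\tt$, a second-moment lower bound for $\L_d(B_n(\tt))$ via Cauchy--Schwarz, and a transversality estimate on $\int_K D_n(\tt)\,d\tt$ --- is sound and in fact somewhat cleaner than the paper's route, once a small misstep is repaired. The Cauchy--Schwarz inequality gives $\L_d(B_n(\tt)) \gg 1/D_n(\tt)$ with an absolute implied constant (since $\sum_\i \L_d(\phi_{\i,\tt}(E_n))\asymp 1$ uniformly), and once $\int_K D_n(\tt)\,d\tt \ll_R 1$ uniformly in $n$ is established the correct way to pass to an a.e.\ $\tt$ conclusion is to apply \emph{ordinary} Fatou to $D_n$: from $\int_K \liminf_n D_n(\tt)\,d\tt \le \liminf_n \int_K D_n(\tt)\,d\tt < \infty$ one has $\liminf_n D_n(\tt) < \infty$ for a.e.\ $\tt\in K$, and along a subsequence realising this $\liminf$ the Cauchy--Schwarz bound keeps $\L_d(B_{n_k}(\tt))$ bounded away from zero, whence $\limsup_n \L_d(B_n(\tt))>0$. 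As written you instead apply \emph{reverse} Fatou to $\L_d(B_n(\tt))$ in the variable $\tt$, which yields only $\int_K \limsup_n \L_d(B_n(\tt))\,d\tt > 0$ and hence only a positive-measure (not full-measure) set of good $\tt$; exhausting $\R^{\#\I\cdot d}$ by compacta does not repair this. That single step is a genuine gap, easily closed by the Fatou-on-$D_n$ argument above.

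By contrast the paper introduces a scaling parameter $s$ and works with $sE_n$ throughout: it uses Bonferroni rather than Cauchy--Schwarz to show $\int_K \L_d\bigl(\bigcup_\i S_{\i,\tt}(\cdots+sE_n)\bigr)\,d\tt = s^d\L_{\#\I\cdot d}(K) + \mathcal{O}_R(s^{2d})$, pairs this with the exact pointwise upper bound $\L_d\bigl(\bigcup_\i\cdots\bigr)\le s^d$, and by a Markov-type inequality deduces that $\L_d(\bigcup_\i\cdots)\ge s^d(1-\epsilon)$ outside a $\tt$-set of measure $\mathcal{O}_R(s^d/\epsilon)$; taking $s$ small makes the exceptional set arbitrarily small, and the hypothesis $s\cdot E_n\subset E_n$ transfers the conclusion back to $E_n$. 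This is quantitatively sharper (it controls how close to full measure the good $\tt$-set is and gives an explicit lower bound $\gg (s^*)^d$ on the measure of the limsup set), but requires the scaling parameter, the exact upper bound $\le s^d$, and the scaling-closure hypothesis; your argument needs none of these extras, only $\int_K D_n \ll_R 1$. Both routes rest on the same core input, a transversality bound for pairs of cylinders, which the paper proves by discretising $A_\j(s\cdot E_n)$ into small cubes of side $r_n^*$ before applying a Falconer--Solomyak transversality lemma (this is precisely where the hypothesis $\L_d([-r_n,r_n]^d\pm E_n)\le C\L_d(E_n)$ is used); you gesture at this but do not carry it out.

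For Part 2 your density-point argument is essentially the one the paper uses, routed there through a lemma about differentiation-regular self-affine sets, but the claimed invariance $S_{j,\tt}(W_C)\subset W_C$ is false: prepending the letter $j$ lengthens the witnessing word by one while the target radius in the definition of $W_C$ shrinks by the factor $\lambda(\A)^{1/d}$, so in fact $S_{j,\tt}(W_C)\subset W_{C\lambda(\A)^{1/d}}$ and more generally $S_{\j,\tt}(W_C)\subset W_{C\lambda(\A)^{|\j|/d}}$. This is harmless because the union $\bigcup_{C>0}W_C$ is then invariant under each $S_{j,\tt}$, so every cylinder still contains $S_{\j,\tt}(W_{C_0})$ for the fixed $C_0$ with $\L_d(W_{C_0})>0$ coming from Part 1, and your density-point contradiction goes through verbatim with $\bigcup_{C>0}W_C$ in place of an individual $W_C$. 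You should state that correction explicitly.
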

	Lemma \ref{Shmerkin lemma} lists a number of conditions on the iterated function system under which $X_\tt$ is differentiation regular, and the statement 2. of the above Theorem holds. 	
	
	As previously mentioned, $\lambda(\A)^{-n}$ is the fast rate at which the Lebesgue measure of $E_{n}$ can converge to zero, yet we could hope for $W(S,\x,(E_n))$ to have positive Lebesgue measure. If one considers a faster rate, it is natural to wonder whether one can obtain Hausdorff dimension results instead. With this goal in mind we introduce the following framework. Given an IFS $S$, $\x\in X^{\N}$ and $s>1$, we let 
	$$\left\{x\in X: \left(T_{i_N}\circ \cdots \circ T_{i_1}\right)(x)\in B\left(x_{N},\left(\frac{1}{\lambda(\A)^{sN}}\right)^{1/d}\right) \textrm{ for i.m. }(i_1,\ldots,i_N)\in \bigcup_{n=1}^{\infty}\I^n\right\}.$$ We denote this set by $W_{s}(S,\x)$. When $\x$ is a constant sequence, i.e. $\x=(y)_{n=1}^{\infty},$ then we use $W_{s}(S,y)$ to denote $W_{s}(S,\x).$
	As a corollary of the proof techniques of Theorem \ref{exponential theorem}, and a Mass Transference Principle of Wang and Wu \cite{WanWu}, we will prove the following result on the almost sure dimension of $W_{s}(S,\x).$. 
	
	\begin{thm}\label{thm:dim}
		Let $\A=\{A_i\}_{i\in \I}$ be a finite set of invertible $d\times d$ matrices satisfying the following properties:
		\begin{itemize}
			\item There exists a positive diagonal matrix $A$
			such that $A_i=A$ for all $i\in \I$, with diagonal entries $\lambda_1, \ldots, \lambda_d$. 	
			\item $\|A\|<1/2$.
			\item $\lambda(\A)>1$.
			\item There exists an open set $U\subset \mathbb{R}^{\# \I\cdot d}$ such that $X_{\tt}$ has non-empty interior for Lebesgue almost every $\tt\in U$. 
		\end{itemize}
		Then for any $\j\in \I^{\N},$ for Lebesgue almost every $\tt\in U,$ for any ball $B$ centred at $X_{\tt}$ we have $$\dim_{H}\left(W_{s}(S_{\tt},\pi_{\tt}(\j))\cap B\right)\geq \min_{p\in P}\left\{\# K_{1} + \# K_{2}\left(1-\frac{s-1}{dp}\right)+\frac{\# K_{3}}{dp}+\sum_{i\in K_{3}}\frac{\log \lambda_i} {p\log\lambda(\A)^{-1}}\right\}$$
		where $$a_i=\frac{\log \lambda_i} {\log\lambda(\A)^{-1}}+\frac{1}{d},$$	
		$P=\{a_i,a_i+\frac{s-1}{d}:1\leq i\leq d\},$ and $$K_{1}=\{i:a_i\geq p\},\, K_{2}=\left\{i:a_i+\frac{s-1}{d}\leq p\right\},\, K_{3}=\{1,\ldots,d\}\setminus (K_{1}\cup K_{2}).$$ 
	\end{thm}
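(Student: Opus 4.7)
The plan is to combine the positive Lebesgue measure statement of Theorem \ref{exponential theorem} with the rectangular Mass Transference Principle of Wang and Wu \cite{WanWu}. The starting observation is that both $W(\S_\tt, \pi_\tt(\j), C)$ and $W_{s}(\S_\tt, \pi_\tt(\j))$ are limsup sets of ellipsoids sharing the same centres: for a word $(i_1,\ldots,i_N)$ one has $x\in (T_{i_N,\tt}\circ\cdots\circ T_{i_1,\tt})^{-1}(B(y,r))=(S_{i_1,\tt}\circ\cdots\circ S_{i_N,\tt})(B(y,r))$, and because $A^N$ is diagonal with entries $\lambda_i^N$, this image is a rectangle of sidelengths $\lambda_i^N r$. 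The two families of ellipsoids therefore differ only by a common, $N$-dependent scaling, which is exactly the situation the Wang--Wu MTP is designed for.

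The first step is to produce full Lebesgue measure via Theorem \ref{exponential theorem}(2). Since the $A_i\equiv A$ are diagonal they are trivially simultaneously diagonalisable, so Lemma \ref{Shmerkin lemma}(2) implies that $X_\tt$ is differentiation regular for a.e.~$\tt$. Thus for a.e.~$\tt\in U$, Lebesgue a.e.~$x\in X_\tt$ lies in $\bigcup_{C>0}W(\S_\tt,\pi_\tt(\j),C)$. Combined with the hypothesis that $X_\tt$ has non-empty interior for a.e.~$\tt\in U$, any ball $B$ centred at $X_\tt$ contains an open set $V\subset int(X_\tt)$ on which the union has full Lebesgue measure. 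By monotonicity in $C$ and continuity of measure one can fix $C$ so large that $W(\S_\tt,\pi_\tt(\j),C)$ has Lebesgue measure arbitrarily close to full on $V$; a Lebesgue density point argument then localizes this to full Lebesgue measure on a smaller open subset of $V$, which is enough to trigger the Wang--Wu MTP.

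The second step is to reformulate the two limsup sets in the canonical rectangle form. Setting $r_N=\lambda(\A)^{-N}$, the sidelengths $\lambda_i^N\lambda(\A)^{-N/d}$ of the ellipsoids defining $W(\S_\tt,\pi_\tt(\j),C)$ become (up to a constant factor $C^{1/d}$) equal to $r_N^{a_i}$ with $a_i=\log\lambda_i/\log\lambda(\A)^{-1}+1/d$, while the sidelengths $\lambda_i^N\lambda(\A)^{-sN/d}$ of the ellipsoids defining $W_s(\S_\tt,\pi_\tt(\j))$ become $r_N^{a_i+(s-1)/d}$. In other words, $W_s(\S_\tt,\pi_\tt(\j))$ is a limsup of rectangles with the same centres as $W(\S_\tt,\pi_\tt(\j),C)$, but with each exponent shifted up uniformly by $(s-1)/d$. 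Applying the Wang--Wu MTP to this configuration yields the claimed lower bound: $P=\{a_i,\,a_i+(s-1)/d:1\le i\le d\}$ is the set of critical scales at which the effective covering dimension changes, and $K_1,K_2,K_3$ index the coordinate directions whose sidelengths at scale $r_N^p$ are respectively larger than, smaller than, or comparable to the target.

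I expect the main technical hurdle to be the passage from \emph{``$\bigcup_{C}W(\S_\tt,\pi_\tt(\j),C)$ has full Lebesgue measure''} to \emph{``$W(\S_\tt,\pi_\tt(\j),C)$ has full Lebesgue measure on an open set for a fixed $C$''}, since the Wang--Wu MTP operates on a single limsup sequence at a fixed scale. This is resolved both conceptually --- the min-over-$P$ dimension bound does not depend on $C$, because dilating every rectangle by a common constant preserves the Hausdorff dimension of the limsup --- and technically, by the density point argument alluded to above. Once this localization is in hand, matching the Wang--Wu output to the explicit min-over-$P$ formula in the statement is a routine computation using the correspondence $(a_i)\leftrightarrow(a_i+(s-1)/d)$.
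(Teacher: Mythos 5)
Your overall architecture --- rewrite $W(\S_\tt,\pi_\tt(\j),C)$ and $W_s(\S_\tt,\pi_\tt(\j))$ as limsup sets of rectangles centred at $S_{\i,\tt}(\pi_\tt(\j))$ with side lengths $\rho_N^{a_i}$ and $\rho_N^{a_i+(s-1)/d}$ for $\rho_N=\lambda(\A)^{-N}$, and feed this into the Wang--Wu mass transference principle --- is exactly the paper's. The genuine gap is in how you propose to verify the Wang--Wu hypothesis. That hypothesis is a \emph{local ubiquity} condition: one needs a single constant $c>0$ such that for \emph{every} ball $B$, $\limsup_{N\to\infty}\L_d\bigl(B\cap\bigcup_{\i\in\I^N}S_{\i,\tt}\bigl(\pi_\tt(\j)+R_N\bigr)\bigr)\geq c\,\L_d(B)$, where $R_N$ is the level-$N$ rectangle of exponents $a_i$; this is a statement about single-level unions, uniform over all balls. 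Your route goes through Theorem \ref{exponential theorem}.2, i.e.\ the measure of the limsup set $\bigcup_{C>0}W(\S_\tt,\pi_\tt(\j),C)$, then fixes $C$ so that $W(\S_\tt,\pi_\tt(\j),C)$ has nearly full measure on an open set, and claims a density-point argument upgrades this to full measure on a smaller open set. That upgrade is false as stated: a measurable set of near-full measure need not have full measure on any open subset (the paper even remarks after Theorem \ref{exponential theorem} that its conclusions are far from a zero--one law). More fundamentally, even full measure of the limsup set would not yield the required hypothesis: in the ubiquity condition all rectangles in the level-$N$ union carry the scale $\rho_N$, whereas in the limsup set each centre appears only at its own scale, so the mass of the limsup set can be spread across levels with $\L_d(B\cap U_N)\to 0$; no uniform-in-$B$ lower bound on $\limsup_N\L_d(B\cap U_N)$ can be extracted from measure statements about $W(\S_\tt,\pi_\tt(\j),C)$ alone.

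What is needed, and what the paper proves as Proposition \ref{ubiquity prop}, comes from a different pair of ingredients: the \emph{first} conclusion of Theorem \ref{exponential theorem}.1 (for the fixed $\tt$, the level-$N$ union $\bigcup_{\i\in\I^N}S_{\i,\tt}(\pi_\tt(\j)+E_N)$ has measure at least $c$ for infinitely many $N$), combined with a self-affine localization: a definite proportion of an arbitrary ball $B\subset X_\tt$ is covered by pairwise disjoint level-$L$ cylinders $S_{\i',\tt}(C)$ (via the covering Lemma \ref{5r covering lemma}), and the global level-$N$ bound is pushed into each cylinder, giving $\limsup_N\L_d\bigl(B\cap\bigcup_{\i\in\I^{N+L}}S_{\i,\tt}(\pi_\tt(\j)+E_N)\bigr)\gg c\,\L_d(B)$ with a constant independent of $B$. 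This cylinder localization is the main technical content of the proof of Theorem \ref{thm:dim} and is absent from your proposal; it also forces a small loss in the exponents (the shift by $L$ and the enlargement of $E_N$), so Wang--Wu is applied with perturbed exponents $a_i(\epsilon)=\frac{\log\lambda_i}{\log\lambda(\A)^{-1}}+\frac{1-\epsilon}{d}$ and $t_i(\epsilon)=\frac{s-1+2\epsilon}{d}$, and the stated formula is recovered by letting $\epsilon\to0$ at the end. Your second step (the dictionary $\rho_N$, $a_i$, $a_i+\frac{s-1}{d}$, and the identification of $K_1,K_2,K_3$) is correct and matches the paper.
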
	
	
	It follows from a result of Feng and Feng \cite{FenFen} that if we replace our assumption $\lambda(\A)>1$ with the stronger assumption $\#\I \cdot |Det(A)|^2>1,$ then our fourth assumption is immediately satisfied and we can in fact take $U=\mathbb{R}^{\# \I\cdot d}$.

	In Theorems \ref{Main thm} and \ref{exponential theorem} our statements for recurrence sets only establish positive Lebesgue measure. It is natural to expect that these recurrence sets in fact have full Lebesgue measure. Motivated by this shortcoming, we introduce a family of IFSs for which we can establish this stronger full measure statement. 
	
	Given $\lambda\in(1/2,1)$ we associate the IFS $S_{\lambda}=\{S_{0}(x)=\lambda x,S_{1}(x)=\lambda x +1\}.$ For each $S_{\lambda}$ the corresponding invariant set equals $[0,\frac{1}{1-\lambda}].$ For each $\lambda\in(1/2,1)$ we let $\mu_{\lambda}$ denote the law of the random variable $$\sum_{i=0}^{\infty}\epsilon_i \lambda^{i}$$ where each $\epsilon_i$ takes values $0$ and $1$ with equal probability. The probability measure $\mu_{\lambda}$ is known as the Bernoulli convolution corresponding to $\lambda$.  Determining the dimension of $\mu_{\lambda}$, and determining those $\lambda$ for which the corresponding Bernoulli convolution is absolutely continuous are two important problems that have attracted much attention. We refer the reader to \cite{Hochman2,Hochman,Shmerkin,Sol,Var2,Var3} for a more detailed survey of Bernoulli convolutions and for an overview of some recent results. 
	
	We call a number $\beta\in (1,2)$ a Garsia number if it is an algebraic integer with norm $\pm 2$, whose Galois conjugates are all of modulus strictly greater than $1$. This family of algebraic integers was first introduced in \cite{Gar}, where it was shown that if $\lambda$ is the reciprocal of a Garsia number then $\mu_{\lambda}$ is absolutely continuous with bounded density. Examples of Garsia numbers include $2^{1/n}$ for $n\geq 1$, and $1.08162\ldots$ the appropriate root of $x^6+x^5-x-2=0$. For more on Garsia numbers we refer the reader to the survey \cite{HarPan} by Hare and Panju. Our main result for this family of IFSs is the following.
	
	\begin{thm}
		\label{Garsia thm}
		Let $\lambda\in(1/2,1)$ be such that $\lambda^{-1}$ is a Garsia number. Then for any $h:\mathbb{N}\to [0,\infty)$ satisfying $\sum_{n=1}^{\infty}h(n)=\infty$, we have that Lebesgue almost every $x\in [0,\frac{1}{1-\lambda}]$ is contained in $R(S_{\lambda},h).$
	\end{thm}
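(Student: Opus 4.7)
The plan is to recast $x\in R(S_\lambda,h)$ as a one-dimensional Diophantine-approximation problem involving the $N$-th partial sums in $\lambda$, and then close by a divergent Borel--Cantelli argument localised to every subinterval of $X=[0,1/(1-\lambda)]$. A direct calculation gives $(T_{i_N}\circ\cdots\circ T_{i_1})(x)=\lambda^{-N}(x-\phi_N(\i))$ with $\phi_N(\i):=\sum_{k=1}^{N}i_k\lambda^{k-1}$, and since $d=1$ and $\lambda(\A)=2\lambda$, the defining condition of $R(S_\lambda,h)$ rearranges to: for infinitely many $N$ there is $\i\in\{0,1\}^N$ with $|(1-\lambda^N)x-\phi_N(\i)|<h(N)/2^N$. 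Writing $P_N:=\phi_N(\{0,1\}^N)$ and letting $A_N$ be the set of $x\in X$ for which this holds, we have $R(S_\lambda,h)=\limsup_N A_N$. If $h(N)$ exceeds a threshold depending only on $\lambda$ for infinitely many $N$, then $A_N$ already covers $X$ for those $N$ and the conclusion is immediate; so by truncating $h$ without destroying the divergence of $\sum h(n)$ we may reduce to the case where the target intervals at each level are pairwise disjoint.

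The Garsia hypothesis on $\lambda^{-1}$ supplies the two ingredients I need. First, the Garsia separation lemma states that any polynomial of degree $<N$ with coefficients in $\{-1,0,1\}$ that is not identically zero takes a value at $\lambda$ of modulus at least $c\cdot 2^{-N}$ for some $c=c(\lambda)>0$; equivalently, the $2^N$ points of $P_N$ are all distinct and $c\cdot 2^{-N}$-separated. Secondly, the Bernoulli convolution $\mu_\lambda$ is absolutely continuous with bounded density $\rho$. From the separation, $A_N$ is a disjoint union of $2^N$ intervals of length $\asymp h(N)/2^N$, so $\L(A_N)\asymp h(N)$ and hence $\sum_N\L(A_N)=\infty$. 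The same separation, combined with the weak convergence $\mu_\lambda^{(N)}:=2^{-N}\sum_{p\in P_N}\delta_p\to\mu_\lambda$ (an immediate consequence of the convolution identity $\mu_\lambda=\mu_\lambda^{(N)}*(\lambda^N\cdot\mu_\lambda)$), upgrades this to the localised estimate $\L(B\cap A_N)\asymp\mu_\lambda(B)\,h(N)$ for every subinterval $B\subseteq X$ and every sufficiently large $N$.

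The crucial technical step is a quasi-independence bound between levels. Using the product structure $P_M=P_N+\lambda^N P_{M-N}$ together with the Garsia separation of $P_M$ to count, for each $p\in P_N$, the number of $q\in P_M$ lying within $h(N)/2^N$ of $p$, one obtains for $N<M$
\[
\L(B\cap A_N\cap A_M)\ \le\ K\,\mu_\lambda(B)\bigl(h(N)h(M)+h(M)\cdot 2^{N-M}\bigr)
\]
with $K$ depending only on $\lambda$. Summed over $N,M\le T$, the first contribution is $\asymp(\sum_{N\le T}h(N))^2$ while the second is only $O(\sum_{N\le T}h(N))$, so the Chung--Erd\H{o}s form of the second Borel--Cantelli lemma applied inside $B$ produces $\L(B\cap\limsup_N A_N)\ge c_0\,\mu_\lambda(B)$ with $c_0>0$ independent of $B$.

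Finally, to upgrade positive measure to full measure I use that for Garsia $\lambda^{-1}$ the density $\rho$ is strictly positive at Lebesgue-almost every point of $X$ (a consequence of the self-similar functional equation for $\rho$ together with the known regularity of Bernoulli convolutions at Garsia parameters). At any Lebesgue point $x$ of $\rho$ with $\rho(x)>0$, the ratio $\mu_\lambda(B(x,r))/\L(B(x,r))$ tends to $\rho(x)>0$ as $r\to 0$, so the inequality of the previous paragraph forces $\liminf_{r\to 0}\L(B(x,r)\cap\limsup_N A_N)/\L(B(x,r))>0$, and the Lebesgue density theorem then gives $x\in\limsup_N A_N$ for Lebesgue-almost every $x$. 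The principal obstacle throughout is the quasi-independence estimate with constants uniform in $B$: translating the Garsia separation into a sharp counting bound on the ``offspring'' of each $p\in P_N$ inside $P_M$ is the essential input, and this is precisely where the arithmetic strength of the Garsia hypothesis (rather than merely the absolute continuity of $\mu_\lambda$) is decisive.
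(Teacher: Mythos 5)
Your proposal follows essentially the same route as the paper's proof: recast the recurrence condition as a Diophantine inequality at the periodic points $\pi_\lambda(\a^\infty)$ (your partial sums $\phi_N$, up to the scale factor $1-\lambda^N$), invoke Garsia separation together with absolute continuity and a.e.~positivity of the Bernoulli convolution density (Garsia plus Mauldin--Simon) to control counts in a window, run a quasi-independence bound through Kochen--Stone/Chung--Erd\H{o}s to get positive local measure, and finish with the Lebesgue density theorem. The only substantive difference is presentational: you localise to arbitrary subintervals $B$ and claim uniform constants, whereas the paper fixes a Lebesgue point $x'$ and sends the radius $r\to 0$, which makes the dependence of ``sufficiently large $N$'' on the scale explicit and avoids any appearance of claiming uniformity over all $B$ at once.
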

	The corresponding shrinking target analogue of Theorem \ref{Garsia thm} was obtained in \cite{Bakapprox}. We emphasise that our method of proof for Theorem \ref{Garsia thm} is different to that given in \cite{Bakapprox}.
	
	Using the mass transference principle of Beresnevich and Velani \cite{BerVel}, we will use Theorem \ref{Garsia thm} to prove the following result which applies when $R(S_{\lambda},h)$ has zero Lebesgue measure.
	
	\begin{thm}
		\label{GarsiaMTP}
		Let $\lambda\in(1/2,1)$ be such that $\lambda^{-1}$ is a Garsia number and $h:\mathbb{N}\to [0,\infty)$ be bounded. Then for any ball $B$ contained in $[0,\frac{1}{1-\lambda}]$ we have 
		$$\sum_{n=1}^{\infty}2^{n(1-s)}h(n)^s<\infty\implies \mathcal{H}^{s}(B\cap R(S_{\lambda},h))=0$$ and $$\sum_{n=1}^{\infty}2^{n(1-s)}h(n)^s=\infty\implies \mathcal{H}^{s}(B\cap R(S_{\lambda},h))=\mathcal{H}^{s}(B).$$ 
	\end{thm}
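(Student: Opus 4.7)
The plan is to express $R(S_\lambda, h) \cap B$ as the limsup of an explicit family of intervals indexed by finite words, and then handle the two halves by, respectively, the Hausdorff--Cantelli lemma and the Beresnevich--Velani mass transference principle applied on top of Theorem \ref{Garsia thm}. First I would identify the approximating intervals. For each word $\i = (i_1, \ldots, i_n) \in \{0,1\}^n$, a direct calculation using $T_{i}(x) = (x - t_i)/\lambda$ shows that $(T_{i_n} \circ \cdots \circ T_{i_1})(x) - x$ is an affine function of $x$ with slope $(1-\lambda^n)/\lambda^n$ and a constant term depending only on $\i$. Hence the set $I_{\i} = \{x : |(T_{i_n} \circ \cdots \circ T_{i_1})(x) - x| \leq h(n)/(2\lambda)^n\}$ is an interval centred at some $c_{\i}$, depending only on $\i$, of half-length $h(n)/(2^n(1-\lambda^n))$, and $R(S_\lambda, h)$ is the limsup of the family $\{I_{\i}\}$ taken over all words of all lengths.

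For the convergence half I would apply Hausdorff--Cantelli to the $2^n$ intervals of diameter comparable to $h(n)/2^n$ at level $n$. This gives $\sum_n \sum_{\i \in \{0,1\}^n} |I_{\i}|^s \asymp \sum_n 2^{n(1-s)} h(n)^s$, which is finite under the convergence hypothesis, and therefore $\mathcal{H}^s(B \cap R(S_\lambda, h)) = 0$.

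For the divergence half, I would introduce the companion rate function $\tilde h(n) := 2^{n(1-s)} h(n)^s (1-\lambda^n)^{1-s}$, chosen so that the interval at level $n$ corresponding to $\i$ in $R(S_\lambda, \tilde h)$ has the same centre $c_{\i}$ as $I_{\i}$, but with half-length equal to the $s$-th power $(h(n)/(2^n(1-\lambda^n)))^s$ of the half-length of $I_{\i}$. Since $(1-\lambda^n)^{1-s}$ is bounded above and below away from zero, $\sum_n \tilde h(n) = \infty$ is equivalent to the divergence hypothesis, so Theorem \ref{Garsia thm} applied to $\tilde h$ guarantees that $R(S_\lambda, \tilde h)$ has full Lebesgue measure in $[0,1/(1-\lambda)]$. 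This is precisely the hypothesis of the Beresnevich--Velani mass transference principle, which in the $1$-dimensional Lebesgue setting with dimension function $f(r) = r^s$ transfers full Lebesgue measure of the limsup of the blown-up intervals into full $s$-dimensional Hausdorff measure of the limsup of the original intervals, giving $\mathcal{H}^s(B \cap R(S_\lambda, h)) = \mathcal{H}^s(B)$.

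The crucial, and mildly delicate, step is the alignment of the centres in the divergence half: the observation that $c_{\i}$ depends only on the word $\i$ and not on the rate function is what allows the swap $h \leftrightarrow \tilde h$ to merely rescale radii, so that the hypothesis of the mass transference principle is a verbatim instance of Theorem \ref{Garsia thm}. The only other thing to check is that the factors $(1-\lambda^n)^{1-s}$ do not affect the divergence test, which is routine since they lie in a compact subinterval of $(0,1]$; everything else is bookkeeping.
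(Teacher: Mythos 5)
Your proposal is correct and follows essentially the same route as the paper: your explicit affine computation of $(T_{i_n}\circ\cdots\circ T_{i_1})(x)-x$ reproduces exactly the interval representation of Lemma \ref{Recurrence interpretation} (balls of radius $h(n)/(2^n(1-\lambda^n))$ centred at the fixed points $\pi_{\lambda}(\overline{\a}^{\infty})$), the convergence half is the paper's covering (Hausdorff--Cantelli) argument, and the divergence half is precisely the paper's application of the Beresnevich--Velani mass transference principle on top of Theorem \ref{Garsia thm}, with your companion function $\tilde h$ making explicit the rescaling the paper leaves implicit. No gaps.
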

	
	\begin{remark}
		It is interesting to compare Theorems \ref{Main thm}, \ref{exponential theorem}, and \ref{Garsia thm} with existing results on the shrinking target problem and quantitative recurrence. The most significant difference between our work and the existing body of work is that we establish positive measure results when our target sets/recurrence neighbourhoods shrink to zero exponentially fast. This is perhaps not surprising as we consider multiple forward orbits. Nevertheless, it is a significant change to the case of a single orbit where for a shrinking target set or a recurrence set to have positive Lebesgue measure, the target sets/recurrence neighbourhoods must typically shrink to zero at a polynomial rate.
	\end{remark}
\begin{remark}
	In this paper we consider the Lebesgue measure of certain limsup sets defined using a self-affine iterated function system. There are many other natural measures one can consider supported on the self-affine set, e.g. self-affine measures. It is natural to ask what is the measure of these limsup sets with respect to these other measures. The techniques of this paper do not immediately apply in this context.
\end{remark}
	\begin{remark}
		We finish this introductory section by drawing a comparison between this paper and \cite{Bak}. In \cite{Bak} the first author studied the following family of limsup sets: Given $\S$ an IFS, $z\in X$, and $(B_n)$ a sequence of balls, we let
		$$Q(\S,z,(B_n)):=\left\{x\in X:x\in (S_{i_1}\circ \cdots \circ S_{i_N})(z)+B_N \textrm{ for i.m. }(i_1,\ldots,i_N)\in \bigcup_{n=1}^{\infty}\I^n\right\}.$$ In \cite{Bak} the first author studied the sets $Q(\S,z,(B_n))$ for the parameterised family of affine iterated function systems appearing in Theorems \ref{Main thm} and \ref{exponential theorem}. In the special case where each matrix defining the family is a similarity, i.e. satisfies $\|Ax-Ay\|=r\|x-y\|$ for all $x,y\in\mathbb{R}^d$ for some $r\in (0,1)$, then the methods used in \cite{Bak} can be used to prove weak versions of Theorems \ref{Main thm} and \ref{exponential theorem}. On a technical level, what makes the analysis of this paper particularly challenging when compared to \cite{Bak}, is that instead of working with limsup sets defined by a sequence of balls, the limsup sets we eventually work with are defined using ellipses or more exotic shapes. Because of this potentially more complicated geometry, the arguments from \cite{Bak} do not apply and new ideas are required. 
	\end{remark}

	The rest of the paper is structured as follows. In Section \ref{Preliminaries} we establish some notation and collect some technical results that we will use in the proofs of our theorems. In Section \ref{Basic results} we prove a number of straightforward theorems that demonstrate how a recurrence set or a shrinking target set can have zero Lebesgue measure. These theorems highlight some of the technical obstacles that need to be overcome to prove our results. In Section \ref{Section 5} we prove Theorem \ref{Main thm} and in Section \ref{Section 6} we prove Theorem \ref{exponential theorem}. In Section \ref{Section 7} we prove Theorem \ref{thm:dim} and in Section \ref{Garsia section} we prove Theorems \ref{Garsia thm} and \ref{GarsiaMTP}.
	
	\section{Preliminaries}
	\label{Preliminaries}
	In this section we introduce some notation and collect some technical results that we will use in the proofs of our main theorems. 
	
	\subsection{Notation}
	Suppose that an IFS $\{S_i\}_{i\in \I}$ is given. We let $\I^{*}=\bigcup_{n=1}^{\infty}\I^n$. For a word $\i=(i_1,\ldots,i_n)\in \I^*$ we let $S_{\i}=S_{i_1}\circ \cdots \circ S_{i_n}$ and $T_{\i}=T_{i_1}\circ \cdots \circ T_{i_n}$. Similarly, given a finite set of matrices $\{A_i\}_{i\in \I}$ and $\i=(i_1,\ldots,i_n)\in \I^*,$ we let $A_{\i}=A_{i_1}\circ \cdots \circ A_{i_n}$. We denote the length of a word $\i\in \I^{*}$ by $|\i|$. Given two distinct words $\i,\j\in \I^{n}$ we let $|\i\wedge \j|=\min\{k:i_{k}\neq j_k\}$, and let $\i\j$ denote their concatenation. We also let $\i^{\infty}$ denote the element of $\I^{\N}$ corresponding to the infinite concatenation of $\i$ with itself. For $\i=(i_1,\ldots,i_n)\in \I^{*}$ we let $\overline{\i}=(i_n,\ldots,i_1)$. We emphasise that $S_{\i}^{-1}=T_{\overline{\i}}$.
	
	Given two real valued functions $f,g:X\to \mathbb{R}$ defined on some set $X$, we write $f\ll g$ if there exists $C>0$ such that $|f(x)|\leq C\cdot g(x)$ for all $x\in X$. We will also on occasion write $f=\O(g)$ which will have the same meaning as $f\ll g$. We write $f\asymp g$ if $f\ll g$ and $g\ll f$. When we want to emphasise a dependence of the underlying constant on some other parameter, for instance $R$, we write $f\ll_{R} g$ or $f=\O_{R}(g).$

	We let $\L_{d}$ denote the Lebesgue measure on $d$-dimensional Euclidean space.
	\subsection{Technical results}
	The following standard lemma is due to Kochen and Stone \cite{KocSto}. For a proof of this lemma see either \cite[Lemma 2.3]{Har} or \cite[Lemma 5]{Spr}. 
	\begin{lemma}
		\label{Quasi-independence on average}
		Let $(X,\mathcal{B},\mu)$ be a finite measure space and $E_n\in \mathcal{B}$ be a sequence of sets such that $\sum_{n=1}^{\infty}\mu(E_n)=\infty$. Then $$\mu\left(\limsup_{n\to\infty} E_n\right)\geq \limsup_{Q\to\infty}\frac{\left(\sum_{n=1}^{Q}\mu(E_n)\right)^2}{\sum_{n,m=1}^{Q}\mu(E_n\cap E_m)}.$$
	\end{lemma}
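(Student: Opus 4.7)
The plan is to prove this via the classical second-moment (Cauchy--Schwarz) argument, which reduces the problem to a finite-$Q$ estimate on $\mu\bigl(\bigcup_{n=1}^{Q} E_n\bigr)$ and then passes to a suitable limit.

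First I would fix $Q\in \N$ and define the counting function $S_Q:=\sum_{n=1}^{Q}\chi_{E_n}$. Observe that $S_Q$ vanishes off the set $U_Q:=\bigcup_{n=1}^{Q}E_n$, so $S_Q=S_Q\cdot \chi_{U_Q}$ pointwise. The Cauchy--Schwarz inequality applied in $L^{2}(\mu)$ then yields
\[
\left(\int S_Q\,d\mu\right)^{2}=\left(\int S_Q\cdot \chi_{U_Q}\,d\mu\right)^{2}\le \left(\int S_Q^{2}\,d\mu\right)\mu(U_Q).
\]
Since $\int S_Q\,d\mu=\sum_{n=1}^{Q}\mu(E_n)$ and $\int S_Q^{2}\,d\mu=\sum_{n,m=1}^{Q}\mu(E_n\cap E_m)$, rearranging gives the key finite-$Q$ bound
\[
\mu(U_Q)\ge \frac{\bigl(\sum_{n=1}^{Q}\mu(E_n)\bigr)^{2}}{\sum_{n,m=1}^{Q}\mu(E_n\cap E_m)}.
\]

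Next I would promote this estimate from $U_Q$ to $\limsup_n E_n=\bigcap_{N}\bigcup_{n\ge N}E_n$. Fix $N\ge 1$. The same Cauchy--Schwarz argument applied to the tail sequence $(E_n)_{n\ge N}$ (which still has divergent measure sum since we only removed finitely many terms) gives
\[
\mu\Bigl(\bigcup_{n\ge N}E_n\Bigr)\ge \limsup_{Q\to\infty}\frac{\bigl(\sum_{n=N}^{Q}\mu(E_n)\bigr)^{2}}{\sum_{n,m=N}^{Q}\mu(E_n\cap E_m)}.
\]
Because $\sum_{n=1}^{Q}\mu(E_n)\to\infty$, the numerator changes by a relatively negligible amount when one truncates the first $N-1$ terms; one checks that the difference between $\bigl(\sum_{n=1}^{Q}\mu(E_n)\bigr)^{2}$ and $\bigl(\sum_{n=N}^{Q}\mu(E_n)\bigr)^{2}$ is $O\bigl(\sum_{n=1}^{Q}\mu(E_n)\bigr)$, while the denominator changes by at most $2(N-1)\sum_{n=1}^{Q}\mu(E_n)$, which is dominated by the quadratic growth of the numerator. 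Hence the limsup on the right is independent of $N$.

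Finally I would let $N\to\infty$. Since $\mu$ is finite and the sets $\bigcup_{n\ge N}E_n$ form a decreasing sequence with intersection $\limsup_n E_n$, continuity of measure from above gives $\mu(\bigcup_{n\ge N}E_n)\searrow \mu(\limsup_n E_n)$. Combining with the $N$-independent lower bound from the previous step yields the stated inequality. The only subtle point in the whole argument is the verification in step two that the Cauchy--Schwarz ratio has the same $\limsup$ whether one starts the sum at $1$ or at $N$; this is where the divergence hypothesis $\sum_n\mu(E_n)=\infty$ is genuinely used.
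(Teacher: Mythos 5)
Your argument is correct: the finite-$Q$ Cauchy--Schwarz bound on $\mu\bigl(\bigcup_{n=N}^{Q}E_n\bigr)$, the observation that truncating finitely many terms changes the numerator only by $O\bigl(\sum_{n\le Q}\mu(E_n)\bigr)$ and can only shrink the denominator, and continuity from above for the finite measure $\mu$ together give exactly the stated inequality. The paper does not prove this lemma itself but cites Kochen--Stone (via Harman and Sprind\v{z}uk), and the proof in those references is essentially the same second-moment argument you give, so there is nothing further to add.
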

	Given a Borel set $A\subset \mathbb{R}^d$, we say that $x\in \mathbb{R}^d$ is a density point for $E$ if $$\lim_{r\to 0}\frac{\L_{d}(B(x,r)\cap E)}{\L_{d}(B(x,r))}=1.$$ The following result, known as the Lebesgue density theorem, will play an important role in our analysis. For a proof see \cite{Mat}.
	
	\begin{thm}
		\label{Lebesgue density theorem}
		Let $E\subset\mathbb{R}^d$ be a Borel set. Then for Lebesgue almost every $x\in\mathbb{R}^d$ we have 
		$$\lim_{r\to 0}\frac{\L_{d}(B(x,r)\cap E)}{\L_{d}(B(x,r))}=\chi_{E}(x).$$ In particular, Lebesgue almost every $x\in E$ is a density point for $E$.
	\end{thm}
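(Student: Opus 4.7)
The plan is to approach this via the Hardy--Littlewood maximal function combined with a density argument. Since $\chi_E$ is locally in $L^1(\R^d)$, the stated result is a special case of the Lebesgue differentiation theorem: for every $f \in L^1_{\mathrm{loc}}(\R^d)$ and Lebesgue almost every $x \in \R^d$,
\[
\lim_{r\to 0}\frac{1}{\L_{d}(B(x,r))}\int_{B(x,r)} f\, d\L_{d} = f(x).
\]
Taking $f = \chi_E$ then recovers the theorem, so I would focus on proving this stronger differentiation statement and deduce Theorem \ref{Lebesgue density theorem} as an immediate corollary.

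First I would establish the weak-type $(1,1)$ estimate for the centred maximal operator $Mf(x) := \sup_{r > 0} \L_{d}(B(x,r))^{-1}\int_{B(x,r)}|f|\, d\L_{d}$, namely $\L_{d}(\{x : Mf(x) > t\}) \leq C_d\, t^{-1}\|f\|_{L^1}$ for a dimensional constant $C_d$. The key input is a Vitali-type covering lemma: from any family of balls with uniformly bounded radii one can extract a countable disjoint subfamily whose $5$-fold enlargements still cover the union. Summing measures across the disjoint subfamily, and using the defining inequality $\int_{B}|f|\, d\L_d > t\, \L_{d}(B)$ on each selected ball, yields the maximal inequality.

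Second, I would run a standard approximation argument. Continuous, compactly supported functions are dense in $L^1(\R^d)$, and for any such $g$ the desired limit equals $g(x)$ pointwise by uniform continuity on compact sets. Given $f \in L^1_{\mathrm{loc}}$, decompose $f = g + h$ with $g$ continuous and $\|h\|_{L^1}$ arbitrarily small. The set on which the averaging limit deviates from $f(x)$ by more than $\varepsilon$ is contained in $\{Mh > \varepsilon/2\} \cup \{|h| > \varepsilon/2\}$, whose Lebesgue measure is at most $(2C_d + 2)\|h\|_{L^1}/\varepsilon$ by the maximal inequality together with Chebyshev's inequality. Letting $\|h\|_{L^1} \to 0$ gives almost-everywhere convergence, and the ``in particular'' clause follows on applying the main statement to $E$ directly and observing that $\chi_E(x)=1$ for $x \in E$.

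The main technical obstacle is the weak-type $(1,1)$ maximal inequality, which rests on the $5r$-covering lemma available in $\R^d$; in a more abstract metric measure setting one would instead invoke the Besicovitch covering theorem or impose a doubling condition on $\L_{d}$ in order to push the same argument through. Since the paper cites Mattila \cite{Mat} for this classical result, I would not attempt to streamline this further.
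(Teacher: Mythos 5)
The paper does not actually prove this statement: it is quoted as the classical Lebesgue density theorem and the proof is delegated to Mattila \cite{Mat}, so there is no internal argument to compare against. Your proposal is the standard textbook proof of the stronger Lebesgue differentiation theorem (weak-type $(1,1)$ bound for the centred Hardy--Littlewood maximal operator via the $5r$-covering lemma, followed by approximation by continuous compactly supported functions), and it is correct in substance and entirely in the spirit of the cited reference. One small point to tidy up: since $E$ may have infinite Lebesgue measure, $\chi_E$ need not belong to $L^1(\R^d)$, and more generally an $f\in L^1_{\mathrm{loc}}$ cannot be decomposed globally as $f=g+h$ with $\|h\|_{L^1}$ arbitrarily small. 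The standard fix is to localise first: it suffices to prove the differentiation statement for almost every $x\in B(0,N)$ for each fixed $N$, and for such $x$ and $r<1$ the averages only involve $f\chi_{B(0,N+1)}\in L^1(\R^d)$, to which your maximal-function and density argument applies verbatim; taking a countable union over $N$ and over $\varepsilon=1/k$ finishes the proof. With that one-line adjustment your argument is complete, and the ``in particular'' clause follows exactly as you say by evaluating at points of $E$.
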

	The following lemma is one of Bonferroni's inequalities. We include a proof for the sake of completeness.
	
	\begin{lemma}
		\label{Bonferroni}
		Let $(X,\mathcal{B},\mu)$ be a measure space and $E_1,\ldots, E_n\in \mathcal{A}$. Then $$\mu\left(\bigcup_{k=1}^{n}E_k\right)\geq \sum_{k=1}^{n}\mu(E_k)-\sum_{1\leq k<k'\leq n}\mu(E_k\cap E_{k'}).$$
	\end{lemma}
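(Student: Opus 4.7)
The plan is to give a short inductive proof, since the statement is a classical Bonferroni-type inequality and induction on $n$ separates the two elementary ingredients one needs: finite additivity for disjoint unions plus subadditivity.

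The base case $n=1$ is the trivial identity $\mu(E_1)\geq \mu(E_1)$. For $n=2$ one uses exact inclusion-exclusion, $\mu(E_1\cup E_2)=\mu(E_1)+\mu(E_2)-\mu(E_1\cap E_2)$, which holds with equality in any measure space.

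For the inductive step, assume the inequality for $n-1$, set $A=\bigcup_{k=1}^{n-1}E_k$, and write
\[
\mu\!\left(\bigcup_{k=1}^{n}E_k\right)=\mu(A\cup E_n)=\mu(A)+\mu(E_n)-\mu(A\cap E_n).
\]
The inductive hypothesis lower-bounds $\mu(A)$ by $\sum_{k=1}^{n-1}\mu(E_k)-\sum_{1\leq k<k'\leq n-1}\mu(E_k\cap E_{k'})$, while finite subadditivity applied to $A\cap E_n=\bigcup_{k=1}^{n-1}(E_k\cap E_n)$ gives $\mu(A\cap E_n)\leq \sum_{k=1}^{n-1}\mu(E_k\cap E_n)$. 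Combining these two bounds produces exactly the desired inequality at level $n$, so the induction closes.

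There is no real obstacle here; the only thing to be a little careful about is the direction of each inequality (a lower bound on $\mu(A)$ and an upper bound on $\mu(A\cap E_n)$, both of which push the right-hand side in the correct direction when we subtract). As an alternative one-line proof I could instead work pointwise: for $N(x):=\#\{k:x\in E_k\}$, verify the pointwise inequality $N(x)-\binom{N(x)}{2}\leq \chi_{\bigcup_k E_k}(x)$ by checking the three cases $N(x)=0,1,2$ and noting that for $N(x)\geq 3$ the left-hand side is nonpositive, and then integrate against $\mu$. Either route gives the lemma in a few lines.
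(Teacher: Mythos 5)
Your inductive argument is correct: the base cases are trivial, and in the inductive step the identity $\mu(A\cup E_n)=\mu(A)+\mu(E_n)-\mu(A\cap E_n)$ together with the induction hypothesis for $\mu(A)$ and subadditivity applied to $A\cap E_n=\bigcup_{k=1}^{n-1}(E_k\cap E_n)$ yields exactly the claimed bound, with all inequalities pointing the right way. This is, however, a genuinely different route from the paper, which proves the pointwise inequality $\chi_{\cup_{k=1}^{n}E_k}(x)\geq \sum_{k=1}^{n}\chi_{E_k}(x)-\sum_{1\leq k<k'\leq n}\chi_{E_k\cap E_{k'}}(x)$ (by noting that if $x$ lies in exactly $j\geq 1$ of the sets then the pair sum is at least $j-1$) and then integrates; your ``alternative one-line proof'' via $N(x)-\binom{N(x)}{2}\leq \chi_{\bigcup_k E_k}(x)$ is essentially that same argument, with the sharper count $\binom{N(x)}{2}$ in place of the paper's cruder lower bound $j-1$. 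The trade-off is minor: the induction isolates the two measure-theoretic ingredients (two-set inclusion--exclusion and finite subadditivity) and needs no combinatorial bookkeeping, while the pointwise proof is a single integration step and makes the mechanism transparent. One small caution for your route: the rewrite $\mu(A\cup E_n)=\mu(A)+\mu(E_n)-\mu(A\cap E_n)$ involves a subtraction that is only legitimate when $\mu(A\cap E_n)<\infty$; this is harmless here, since if some $\mu(E_k\cap E_{k'})=\infty$ the asserted inequality is vacuous (its right-hand side is not defined, or one argues it holds trivially), and in the paper's applications the measures involved are finite, but it is worth a clause saying so.
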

	\begin{proof}
		We will show that for any $x\in X$ we have 
		\begin{equation}
			\label{Bonferroni function}
			\chi_{\cup_{k=1}^{n}E_k}(x)\geq \sum_{k=1}^{n}\chi_{E_k}(x)-\sum_{1\leq k<k'\leq n}\chi_{E_k\cap E_{k'}}(x).
		\end{equation} Using this inequality and integrating with respect to $\mu$ yields our result. If $x\notin \cup_{k=1}^{n}E_k$ then \eqref{Bonferroni function} holds trivially. Now suppose that $x\in \cup_{k=1}^{n}E_k$. Let $j\geq 1$ be such that $\sum_{k=1}^{n}\chi_{E_k}(x)=j.$ After relabelling our sets we may assume that $x\in E_1, x\in E_2,\ldots, x\in E_j$, and $x\notin \cup_{k=j+1}^{n}E_k$. Then we have $$\sum_{1\leq k<k'\leq n}\chi_{E_k\cap E_{k'}}(x)\geq \sum_{k=2}^{j}\chi_{E_1\cap E_k}(x)=j-1.$$ Using this inequality, we observe $$\chi_{\cup_{k=1}^{n}E_k}(x)=1=j-(j-1)\geq \sum_{k=1}^{n}\chi_{E_k}(x)-\sum_{1\leq k<k'\leq n}\chi_{E_k\cap E_{k'}}(x).$$ Therefore \eqref{Bonferroni function} holds and our proof is complete. 
	\end{proof}

	The following technical framework is adapted from \cite{Bak}. Let $\Omega$ denote a metric space equipped with a Borel measure $\eta$, and let $X$ denote some compact subset of $\mathbb{R}^d$. For each $n\in\mathbb{N},$ we assume that we are given a Borel set $E_n$ and a finite set of continuous functions $\{f_{l,n}:\Omega\to X\}_{l=1}^{R_n}$. We are interested in the distribution of the elements of  $\{f_{l,n}(\omega)+s\cdot E_{n}\}_{l=1}^{R_n}$ for a $\eta$-typical $\omega$ and for small $s>0$. We say that a set $Y\subset \mathbb{R}^d$ is {\it $(s,E_n)$-separated} if for all $ x,x'\in Y$ such that $x\neq x'$ we have $$(x+sE_n)\cap (x'+sE_n)=\emptyset. $$ Given $\omega\in \Omega,$ $s>0$, and $n\in\mathbb{N}$ we let $$T(\omega,s,n):=\max\{\# Y:Y\subset\{1,\ldots, R_n\} \textrm{ and }\{f_{l,n}(\omega)\}_{l\in Y} \textrm{ is }(s,E_n)\textrm{ separated}\}.$$ 
	We also let $$R(\omega,s,n):=\{(l,l')\in \{1,\ldots,R_n\}^2: l\neq l',\, (f_{l,n}(\omega)+s\cdot E_n)\cap (f_{l',n}(\omega)+s\cdot E_n)\neq \emptyset\}.$$
	The following lemma follows from Lemmas 3.2 and 3.5 from \cite{Bak}. In these lemmas the sets $E_n$ were always taken to be balls, but the proofs still apply with only minor notational changes in the more general setting where the sets $E_n$ are only assumed to be Borel sets.
	\begin{lemma}
		\label{Density lemma}
		Assume that there exists $G:(0,\infty)\to (0,\infty)$ satisfying $\lim_{s\to 0}G(s)=0$, such that for all $n\in\mathbb{N}$ we have $$\int_{\Omega}\frac{\#R(\omega,s,n)}{R_n}\, d\eta\leq G(s).$$ Then we have the following information about the upper density		
		$$\eta\left(\bigcap_{\epsilon>0} \bigcup_{c,s>0}\left\{\omega:\overline{d}(n:T(\omega,s,n)>c\cdot R_n)\geq 1-\epsilon\right\}\right)=\eta(\Omega).$$
	\end{lemma}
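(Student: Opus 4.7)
The plan is to reduce the statement to an $L^1$ Fatou-type estimate on the average overlap ratio $f_n(\omega) := \#R(\omega, s, n)/R_n$, via a deterministic combinatorial inequality relating $T$ to $\#R$.

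First I would prove the pointwise bound $T(\omega, s, n) \geq R_n - \tfrac{1}{2}\#R(\omega, s, n)$. View $R(\omega, s, n)$ as (twice) the edge count of the intersection graph on $\{1, \ldots, R_n\}$, where $\{l, l'\}$ is an edge iff $(f_{l,n}(\omega) + sE_n) \cap (f_{l',n}(\omega) + sE_n) \neq \emptyset$. Any graph with $m$ edges admits a vertex cover of size at most $m$ (pick one endpoint per edge), so its independence number, which is exactly $T(\omega, s, n)$, is at least $R_n - m$. Specialising to $c = 1/2$ this yields the set inclusion
\[
\{n : f_n(\omega) < 1\} \subset \{n : T(\omega, s, n) > R_n/2\}.
\]

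Next, for each $\epsilon > 0$ I would consider the measurable set
\[
\tilde B_\epsilon := \bigcap_{c, s \in \mathbb{Q}_{>0}} \bigl\{\omega \in \Omega : \overline{d}(\{n : T(\omega, s, n) > cR_n\}) < 1 - \epsilon\bigr\},
\]
which trivially contains the complement of $\bigcup_{c, s > 0}\{\omega : \overline{d}(\{n : T(\omega,s,n) > cR_n\}) \geq 1 - \epsilon\}$, since a rational intersection is at least as large as the full one. For $\omega \in \tilde B_\epsilon$ and any rational $s > 0$, taking $c = 1/2$ and invoking the inclusion from the previous step gives $\overline{d}(\{n : f_n(\omega) < 1\}) < 1-\epsilon$, which rewrites as
\[
\liminf_{N \to \infty} \frac{\#\{n \leq N : f_n(\omega) \geq 1\}}{N} > \epsilon.
\]
Hence for all sufficiently large $N$ (depending on $\omega$ and $s$) we have $\sum_{n=1}^N f_n(\omega) \geq \#\{n \leq N : f_n(\omega) \geq 1\} > \epsilon N$, so $\liminf_N \tfrac{1}{N}\sum_{n=1}^N f_n(\omega) \geq \epsilon$ throughout $\tilde B_\epsilon$.

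Applying Fatou's lemma to the nonnegative averages $g_N := \tfrac{1}{N}\sum_{n=1}^N f_n$ together with the hypothesis $\int f_n\, d\eta \leq G(s)$ yields
\[
\epsilon\, \eta(\tilde B_\epsilon) \leq \int_{\Omega} \liminf_N g_N \, d\eta \leq \liminf_N \int g_N \, d\eta \leq G(s).
\]
Letting $s \to 0$ along the rationals forces $\eta(\tilde B_\epsilon) = 0$, and since the complement of the target set lies inside the countable union $\bigcup_{k\in\mathbb{N}} \tilde B_{1/k}$, this completes the proof. The one step that requires genuine thought is the combinatorial bound $T \geq R_n - \#R/2$; the rest is soft measure theory, with the only technical annoyance being the reduction of the uncountable union over $s > 0$ to the countable family $\mathbb{Q}_{>0}$.
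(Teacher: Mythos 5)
Your proof is correct, and it is worth noting that the paper does not actually prove this lemma in-text: it is stated as following from Lemmas 3.2 and 3.5 of \cite{Bak}, with the remark that the arguments there (written for balls $E_n$) go through for general Borel sets with only notational changes. Your argument is a self-contained substitute with the same two-part structure one expects from that source: a deterministic counting bound, here $T(\omega,s,n)\ge R_n-\tfrac12\#R(\omega,s,n)$ (the complement of a vertex cover of the intersection graph is an independent set, and any index set realising pairwise disjoint translates is $(s,E_n)$-separated, so $T$ dominates the independence number), followed by a Markov/Fatou argument on the Ces\`aro averages of $f_n=\#R(\omega,s,n)/R_n$, with the reduction to rational $c,s$ and to $\epsilon=1/k$ justified by monotonicity of the sets in $\epsilon$ and the fact that restricting the intersection to rational parameters only enlarges the exceptional set. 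A genuine benefit of your write-up is that it makes completely transparent why the geometry of $E_n$ is irrelevant — only the incidence structure of the translates enters — which is precisely the generalisation from balls to Borel sets that the paper asserts without detail.

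Two small points to tidy. First, $T(\omega,s,n)$ is in general only bounded \emph{below} by the independence number of the intersection graph (if two indices give the same point, the separation condition on the point set does not register the repetition), but a lower bound is all you use, so the parenthetical ``exactly'' should be ``at least''. Second, you assert that $\tilde B_\epsilon$ is measurable; this requires measurability of $\omega\mapsto T(\omega,s,n)$, which you do not check (and for a general Borel set $E_n$ the overlap condition involves the difference set $sE_n-sE_n$, which need not be Borel). This is easily sidestepped: for each rational $s$ your argument in fact shows $\tilde B_\epsilon\subset\bigl\{\omega:\liminf_{N}\tfrac1N\sum_{n\le N}f_n(\omega)\ge\epsilon\bigr\}$, a measurable set of measure at most $G(s)/\epsilon$, so $\tilde B_\epsilon$ is contained in a measurable null set, which suffices for the conclusion; measurability of $\#R(\omega,s,n)$ itself is already implicit in the integral hypothesis, so this is the same level of glossing as in the statement of the lemma.
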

	Finally we need the following estimate, a {\it transversality} condition in the self-affine context.
	
	\begin{lemma}
		\label{Transverality estimate}
		Let $\{A_i\}_{i\in \I}$ be a finite set of invertible $d\times d$ matrices satisfying $\|A_i\|<1/2$ for all $i\in \I$. Let $\i,\j\in \I^{\mathbb{N}}$ be such that $\i\neq \j$ and let $R>0$ be arbitrary. Then for any Borel set $E\subset\mathbb{R}^d$ we have 
		\[
		\L_{\#\I\cdot d}\left(\tt\in [-R,R]^{\#\I\cdot d}:\pi_{\tt}(\i)-\pi_{\tt}(\j)\in E\right)\ll_{R} |Det(A_{i_1,\ldots i_{|\i\wedge \j|-1}})^{-1}|\cdot\L_{d}(E). 
		\]	
		
	\end{lemma}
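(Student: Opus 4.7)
The plan is to view $\tt \mapsto \pi_{\tt}(\i)-\pi_{\tt}(\j)$ as a linear map from $\R^{\#\I\cdot d}$ to $\R^d$ and to isolate a $d$-dimensional direction along which its derivative has determinant comparable to $|\det M|$, where $M := A_{i_1\ldots i_{k-1}}$ and $k := |\i\wedge\j|$. A Fubini argument then reduces the measure estimate to an affine change of variables in this direction.

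Using $\pi_{\tt}(\i) = \sum_{n\geq 0}A_{i_1\ldots i_n}t_{i_{n+1}}$ (with empty product equal to $I$) and cancelling the terms $n \leq k-2$, on which $\i$ and $\j$ agree, a direct computation gives that the coefficients of $t_{i_k}$ and $t_{j_k}$ in $\pi_{\tt}(\i) - \pi_{\tt}(\j)$ equal
\[
C_{i_k} = M(I + \alpha - \beta), \qquad C_{j_k} = M(-I + \gamma - \delta),
\]
where
\[
\alpha = \sum_{m\geq 1,\ i_{k+m}=i_k}A_{i_k\ldots i_{k+m-1}}, \qquad \gamma = \sum_{m\geq 1,\ i_{k+m}=j_k}A_{i_k\ldots i_{k+m-1}},
\]
and $\beta,\delta$ are the analogues formed from $\j$. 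Writing $\rho := \max_i \|A_i\| < 1/2$, the difference
\[
\alpha - \gamma = \sum_{m\geq 1}\bigl(\mathbf{1}[i_{k+m}=i_k]-\mathbf{1}[i_{k+m}=j_k]\bigr)A_{i_k\ldots i_{k+m-1}}
\]
and its $\j$-analogue $\delta - \beta$ have coefficients in $\{-1,0,+1\}$ in front of each product (this uses crucially that $i_k \neq j_k$), so $\|\alpha-\gamma\|,\ \|\delta-\beta\| \leq \sum_{m\geq 1}\rho^m = \rho/(1-\rho)$. Consequently
\[
C_{i_k} - C_{j_k} = M\bigl(2I + (\alpha - \gamma) + (\delta - \beta)\bigr) = M(2I + E),
\]
with $\|E\| \leq 2\rho/(1-\rho) < 2$; hence the smallest singular value of $2I+E$ is at least $2(1-2\rho)/(1-\rho) > 0$, and $|\det(C_{i_k}-C_{j_k})| \geq c\,|\det M|$ for some constant $c = c(\A) > 0$.

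To finish, perform the linear change of variables $(t_{i_k},t_{j_k}) \mapsto (u,v) := ((t_{i_k}-t_{j_k})/2,\ (t_{i_k}+t_{j_k})/2)$, whose Jacobian is a constant depending only on $d$, leaving the remaining $t_l$ unchanged. In these coordinates $\pi_{\tt}(\i)-\pi_{\tt}(\j) = (C_{i_k}-C_{j_k})u + F$, where $F$ depends only on $v$ and on $(t_l)_{l\neq i_k,j_k}$. Applying Fubini with the innermost integration in $u$, for any choice of outer variables the admissible set $\{u : (C_{i_k}-C_{j_k})u + F \in E\}$ is an affine preimage of $E$ under the invertible matrix $C_{i_k}-C_{j_k}$, and so has Lebesgue measure at most $|\det(C_{i_k}-C_{j_k})|^{-1}\L_d(E) \leq c^{-1}|\det M|^{-1}\L_d(E)$. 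The outer variables $v$ and $(t_l)_{l\neq i_k,j_k}$ range over subsets of $[-R,R]^d$ whose total measure is at most $(2R)^{(\#\I-1)d}$, and combining yields the bound $\ll_R |\det M|^{-1}\L_d(E)$.

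The main obstacle is conceptual: under the bare assumption $\|A_i\| < 1/2$ neither $C_{i_k}$ nor $C_{j_k}$ alone need be uniformly invertible, because $\|\alpha-\beta\|$ and $\|\gamma-\delta\|$ can each approach $2\rho/(1-\rho)$, which can exceed $1$. The point is that their difference \emph{is} uniformly invertible: the $\pm I$ terms combine constructively into $2I$, while the disjoint-indicator structure forced by $i_k \neq j_k$ keeps the perturbation strictly below $2$ in norm, making the single change of variable $u = (t_{i_k}-t_{j_k})/2$ enough to carry out a Fubini bound.
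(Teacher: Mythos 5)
Your proof is correct, and it takes a genuinely different route from the paper's. The paper follows the Falconer--Solomyak transversality argument: after factoring out $M=A_{i_1\ldots i_{k-1}}$ (with $k=|\i\wedge\j|$) it keeps $t_{i_k}$ (or $t_{j_k}$) as the single integration variable, whose coefficient is $I+E_{i_k}$ (resp.\ $-I+E_{j_k}$), and it cites Solomyak \cite{Sol2} for the non-trivial dichotomy that, under $\|A_i\|<1/2$, at least one of these two matrices has inverse bounded by a constant independent of $\i,\j$; Fubini is then applied in that one coordinate. You instead integrate along the direction $u=(t_{i_k}-t_{j_k})/2$, whose coefficient is the difference $C_{i_k}-C_{j_k}=M(2I+E)$, and the cancellation of indicators forced by $i_k\neq j_k$ gives $\|E\|\leq 2\rho/(1-\rho)<2$ with $\rho=\max_i\|A_i\|<1/2$, so this matrix is \emph{always} uniformly invertible, with $|Det(C_{i_k}-C_{j_k})|\gg |Det(M)|$, and no dichotomy is needed; the rest of your Fubini bookkeeping (constant Jacobian of the $(u,v)$ change of variables, outer variables confined to $[-R,R]^d$ blocks) is sound. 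What each approach buys: yours is self-contained and makes transparent exactly where the threshold $\|A_i\|<1/2$ enters, eliminating the external citation; the paper's is shorter on the page and stays aligned with the standard machinery of \cite{Fal2,Sol2}, which the authors reuse elsewhere. Both yield the same bound $\ll_{R}|Det(A_{i_1\ldots i_{k-1}})^{-1}|\cdot\L_{d}(E)$.
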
Here and throughout, if $|\i\wedge \j|=1$ then $A_{i_1,\ldots i_{|\i\wedge \j|-1}}$ is simply the identity matrix.
	\begin{proof}
		This statement essentially follows from an argument due to Solomyak \cite{Sol2}, which is in turn based upon an argument due to Falconer \cite{Fal2}. We include the details for the sake of completeness.
		
		Let $\i \neq \j\in \I^\N$. We start by observing that  $$\pi_{\tt}(\i)-\pi_{\tt}(\j)=t_{i_1}+\sum_{n=1}^{\infty}A_{i_1\ldots i_n}t_{i_{n+1}}-t_{j_1}-\sum_{n=1}^{\infty}A_{j_1\ldots j_n}t_{j_{n+1}}.$$ Which by the definition of $|\i\wedge \j|$ can be rewritten as 
		\begin{align*}
			\pi_{\tt}(\i)-\pi_{\tt}(\j)&=\sum_{n=|\i\wedge \j|-1}^{\infty}A_{i_1\ldots i_n}t_{i_{n+1}}-\sum_{n=|\i\wedge \j|-1}^{\infty}A_{j_1\ldots j_n}t_{j_{n+1}}\\
			&=A_{i_{1}\ldots i_{|\i\wedge \j|-1}}\left(t_{i_{|\i\wedge \j|}}-t_{j_{|\i\wedge \j|}}+\sum_{n=|\i\wedge \j|}^{\infty}A_{i_{|\i\wedge \j|}\ldots i_n}t_{i_{n+1}}-\sum_{n=|\i\wedge \j|}^{\infty}A_{j_{|\i\wedge \j|}\ldots j_n}t_{j_{n+1}}\right).
		\end{align*}
		If we group terms in these summations according to common values of $i\in \I$ we can express the above as $$A_{i_{1}\ldots i_{|\i\wedge \j|-1}}\left(t_{i_{|\i\wedge \j|}}-t_{j_{|\i\wedge \j|}}+\sum_{i\in \I}\left(\sum_{n\geq |\i\wedge \j|:i_{n+1}=i}A_{i_{|\i\wedge \j|}\ldots i_n}t_{i}-\sum_{n\geq |\i\wedge \j|:j_{n+1}=i}A_{j_{|\i\wedge \j|}\ldots j_n}t_{i}\right)\right).$$ Now for each $i\in \I$ we let $E_i:\mathbb{R}^d \to \mathbb{R}^d$ be the linear map defined by 
		\[E_{i}(t_i)=\sum_{n\geq |\i\wedge \j|:i_{n+1}=i}A_{i_{|\i\wedge \j|}\ldots i_n}t_{i}-\sum_{n\geq |\i\wedge \j|:j_{n+1}=i}A_{j_{|\i\wedge \j|}\ldots j_n}t_{i}.\]
		Solomyak in \cite{Sol2} proved that under the assumptions of this lemma, there exists $C>0$ independent of $\i$ and $\j$ such that either $\|(I+E_{i_{|\i\wedge \j|}})^{-1}\|<C$ or $\|(-I+E_{j_{|\i\wedge \j|}})^{-1}\|<C$. Let us suppose $\|(I+E_{i_{|\i\wedge \j|}})^{-1}\|<C$. The other case is handled similarly.
		
		It follows from the  above that $\pi_{\tt}(\i)-\pi_{\tt}(\j)\in E$ is equivalent to $$t_{i_{|\i\wedge \j|}}+E_{i_{|\i\wedge \j|}}(t_{i_{|\i\wedge \j|}})\in t_{j_{|\i\wedge \j|}}-\sum_{i\in \I\setminus\{i_{|\i\wedge \j|}\}}E_{i}(t_i)+A_{i_{1}\ldots i_{|\i\wedge \j|-1}}^{-1}(E),$$ which in turn is equivalent to 
		\begin{equation}
			\label{t_{i_1} inclusion}
			t_{i_{|\i\wedge \j|}}\in (I+E_{i_{|\i\wedge \j|}})^{-1}\left(t_{j_{|\i\wedge \j|}}-\sum_{i\in \I\setminus\{i_{|\i\wedge \j|}\}}E_{i}(t_i)+A_{i_{1}\ldots i_{|\i\wedge \j|-1}}^{-1}(E)\right).
		\end{equation} Let us now fix a set of vectors $\{t_i\}_{i\in \I\setminus\{i_{|\i\wedge \j|}\}}$ and consider $t_{i_{|\i\wedge\j|}}$ as a variable. Using the fact that $\|(I+E_{i_{|\i\wedge \j|}})^{-1}\|<C,$ and $$\L_{d}\left(t_{j_{|\i\wedge \j|}}-\sum_{i\in \I\setminus\{i_{|\i\wedge \j|}\}}E_{i}(t_i)+A_{i_{1}\ldots i_{|\i\wedge \j|-1}}^{-1}(E)\right)=|Det(A_{i_1,\ldots i_{|\i\wedge \j|-1}})^{-1}|\cdot \L_{d}(E),$$ we see that \eqref{t_{i_1} inclusion} implies that 
		\[
		\mathcal{L}_{d}\left(t_{i_{|\i\wedge \j|}}\in\mathbb{R}^d:\eqref{t_{i_1} inclusion}\textrm{ is satisfied for the fixed} \{t_i\}_{i\in \I\setminus\{i_{|\i\wedge \j|}\}}\right)\ll  |Det(A_{i_1,\ldots i_{|\i\wedge \j|-1}})^{-1}|\cdot \L_{d}(E).
		\]
		Now applying Fubini's theorem, it follows that 
		$$\L_{\# \I\cdot d}\left(\tt\in [-R,R]^{\# \I\cdot d}: \eqref{t_{i_1} inclusion} \textrm{ is satisfied}\right)\ll_{R}  |Det(A_{i_1,\ldots i_{|\i\wedge \j|-1}})^{-1}|\cdot \L_{d}(E).$$

	\end{proof}

	\section{Basic results}
	\label{Basic results}
	Before moving on to the proofs of our main theorems, we explore the ways in which the conclusions of these theorems can fail. The proofs of the following statements serve as a warm up for what is to follow.
	
	\begin{thm}
		\label{Exact overlap}
		Let $\S$ be an iterated function system. Suppose that there exists $\j',\j\in \I^{*}$ such that $\j'\neq \j$ and $S_{\j'}=S_{\j}.$ Then for any $\x\in X^{\N}$ and sequence of Borel sets satisfying $\L_{d}(E_n)\ll \lambda(\A)^{-n},$ the set $W(\S,\x,(E_n))$ has zero Lebesgue measure. Similarly, the set $R(\S,(E_n))$ has zero Lebesgue measure. 
	\end{thm}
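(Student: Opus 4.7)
The strategy is Borel--Cantelli, with the exact overlap supplying the crucial exponential saving. Rewriting $(T_{i_N}\circ\cdots\circ T_{i_1})(x)=S_{\i}^{-1}(x)$ with $\i=(i_1,\ldots,i_N)$, one sees $W(\S,\x,(E_n))=\limsup_{N\to\infty}\bigcup_{\i\in\I^N}S_{\i}(x_N+E_N)$, and $R(\S,(E_n))$ is a limsup of the affine preimages $\{x:S_{\i}^{-1}(x)-x\in E_N\}$. The naive bound $\sum_{\i\in\I^N}\L_d(S_{\i}(x_N+E_N))=\lambda(\A)^N\L_d(E_N)$ is only $\O(1)$, not summable in $N$; the exact overlap is needed to collapse words into equivalence classes.

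Set $G_N:=\{S_{\i}:\i\in\I^N\}$ and $c(S):=\#\{\i\in\I^N:S_{\i}=S\}$. The plan is to show that for some $\rho>1$ independent of $N$,
$$\sum_{S\in G_N}|\det A_S| \le \lambda(\A)^N\,\rho^{-N},$$
so that $\L_d\bigl(\bigcup_{\i\in\I^N}S_{\i}(x_N+E_N)\bigr)\le\sum_{S\in G_N}|\det A_S|\,\L_d(E_N)=\O(\rho^{-N})$; Borel--Cantelli then forces $\L_d(W(\S,\x,(E_n)))=0$. First I reduce to the case $|\j|=|\j'|=k$: if the lengths differ, replace $\j,\j'$ by $\j\j',\j'\j$, which have common length and still satisfy $S_{\j\j'}=S_{\j'\j}=S_{\j}^2$, and which are distinct as words because otherwise $\j,\j'$ would commute in the free monoid, hence be powers of a single word $w$, forcing $S_w^\ell=\mathrm{id}$ for some $\ell\ge 1$ and contradicting that $S_w$ is a strict contraction. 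Now for $\i\in\I^N$ with $k\mid N$, split $\i=B_1\cdots B_{N/k}$ into length-$k$ blocks and set $r(\i):=\#\{l:B_l\in\{\j,\j'\}\}$. Each such block can be toggled independently between $\j$ and $\j'$ without altering $S_{\i}$, yielding $c(S_{\i})\ge 2^{r(\i)}$.

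Let $\mu$ be the Bernoulli probability measure on $\I^N$ with weights $p_i=|\det A_i|/\lambda(\A)$, so that $\mu(\{\i\})=|\det A_{\i}|/\lambda(\A)^N$. Since $A_{\i'}=A_{\i}$ whenever $S_{\i'}=S_{\i}$, grouping words in $\I^N$ by their image in $G_N$ gives
$$\sum_{S\in G_N}|\det A_S|=\sum_{\i\in\I^N}\frac{|\det A_{\i}|}{c(S_{\i})}=\lambda(\A)^N\,\mathbb{E}_\mu\!\left[\frac{1}{c(S_{\i})}\right]\le \lambda(\A)^N\,\mathbb{E}_\mu\!\left[2^{-r(\i)}\right].$$
Under $\mu$ the blocks $B_l$ are i.i.d., and $q:=\mu(\{B_l\in\{\j,\j'\}\})=2|\det A_{\j}|/\lambda(\A)^k>0$. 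A direct product computation gives $\mathbb{E}_\mu[2^{-r(\i)}]=(1-q/2)^{N/k}=\rho^{-N}$ with $\rho=(1-q/2)^{-1/k}>1$, which is exactly what is needed. (Remainder blocks when $k\nmid N$ cost only a bounded multiplicative constant.)

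For $R(\S,(E_n))$ the same grouping works: the set $\{x:S_{\i}^{-1}(x)-x\in E_N\}$ is an affine preimage of $E_N$ under a map with linear part $A_{\i}^{-1}-I$, and since $\|A_{\i}\|\le(\max_i\|A_i\|)^N\to 0$ the matrix $I-A_{\i}$ is uniformly invertible for large $N$, giving $\L_d(\{x:S_{\i}^{-1}(x)-x\in E_N\})\ll|\det A_{\i}|\,\L_d(E_N)$. The rest of the argument is identical. I expect the main obstacle to be the combinatorial reduction to $|\j|=|\j'|$ together with the clean identification $c(S_{\i})\ge 2^{r(\i)}$; after that, the single-line exponential-moment identity $(1-q/2)^{N/k}$ does all the heavy lifting.
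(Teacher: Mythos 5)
Your proof is correct and follows essentially the same route as the paper: reduce to $|\j|=|\j'|=k$ via the pair $\j\j',\j'\j$, decompose words of length $N$ into length-$k$ blocks, exploit the exact overlap to gain a fixed factor per block, and conclude by Borel--Cantelli, with the recurrence case reduced to the same count via the uniform bound $\L_d(\{x:S_{\i}^{-1}(x)-x\in E_N\})\ll |\det A_{\i}|\,\L_d(E_N)$. The only differences are bookkeeping: the paper simply replaces every $\j$-block by $\j'$ so the union runs over $(\I^{k}\setminus\{\j\})^{\lfloor N/k\rfloor}$, and it bounds the recurrence Jacobian via the Neumann series $(A_{\i}^{-1}-I)^{-1}=\sum_{l\geq1}A_{\i}^{l}$, whereas your multiplicity/exponential-moment computation and your $|\det(I-A_{\i})|$ estimate produce exactly the same decay $(1-|\det A_{\j}|/\lambda(\A)^{k})^{\lfloor N/k\rfloor}$ (and your explicit check that $\j\j'\neq\j'\j$ is a detail the paper leaves implicit).
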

	\begin{proof}
		Let $\j', \j\in \I^{*}$ be such that $\j'\neq \j$ and $S_{\j'}=S_{\j}$. By considering $S_{\j'\j}$ and $S_{\j\j'}$ if necessary, we may assume that $\j'$ and $\j$ have the same length. Let us suppose $|\j'|=|\j|=k$ for some $k\in\mathbb{N}$. Then we have by the definition \eqref{eq:lambda} of $\lambda (\A)$ that
		\[
		\gamma:=\sum_{\i\in \I^{k}\setminus\{\j\}}\frac{|Det(A_\i)|}{\lambda(\A)^k}<\sum_{\i\in \I^{k}}\frac{|Det(A_\i)|}{\lambda(\A)^k}=1.\]
		Now let $\x=(x_n)\in X^{\N}$ be arbitrary, and let $(E_n)$ be an arbitrary sequence of Borel sets satisfying $\L_{d}(E_n)\ll \lambda(\A)^{-n}$. We start by proving that $W(S,\x,(E_n))$ has zero Lebesgue measure. Observe that for any $n>k$ we have 
		\begin{align*}
			&\left\{x:(T_{j_n}\circ \cdots \circ T_{j_1})(x)\in x_n+E_n\textrm{ for some }(j_1,\ldots,j_n)\in \I^n\right\}\\
			=&\bigcup_{(j_1,\ldots, j_n)\in \I^{n}}S_{j_1\ldots j_n}\left(x_n+E_n\right)\\
			=&\bigcup_{\i \in (\I^{k}\setminus\{\j\})^{\lfloor n/k\rfloor}}\bigcup_{\i' \in \I^{n-k\cdot \lfloor n/k\rfloor}}S_{\i \i'}\left(x_n+E_n\right).
		\end{align*}
		Now using that $\L_{d}(E_n)\ll \lambda(\A)^{-n}$, we have the following for $n>k$
		\begin{align*}
			&\L_{d}\left(\left\{x:(T_{j_n}\circ \cdots \circ T_{j_1})(x)\in x_n+E_n\textrm{ for some }(j_1,\ldots,j_n)\in \I^n\right\}\right)\\
			\ll & \sum_{\i \in (\I^{k}\setminus\{\j\})^{\lfloor n/k\rfloor}}\sum_{\i' \in \I^{n-k\cdot \lfloor n/k\rfloor}} |Det(A_{\i \i'})|\lambda(\A)^{-n}\\
			= & \sum_{\i \in (\I^{k}\setminus\{\j\})^{\lfloor n/k\rfloor}}|Det(A_\i)|\lambda(\A)^{-k\cdot \lfloor n/k\rfloor}\sum_{\i' \in \I^{n-k\cdot \lfloor n/k\rfloor}}| Det(A_{ \i'}|)\lambda(\A)^{-n+k\cdot \lfloor n/k\rfloor}
		\end{align*}
		The second sum in this product is equal to $1$ by the definition \eqref{eq:lambda} of $\lambda(\A)$. Continuing from here, using the definition of $\gamma$ we have
		\begin{align*}
			\sum_{\i \in (\I^{k}\setminus\{\j\})^{\lfloor n/k\rfloor}}|Det(A_\i)|\lambda(\A)^{-k\cdot \lfloor n/k\rfloor}
			=&\left( \sum_{\i \in (\I^{k}\setminus\{\j\})}|Det(A_\i)|\lambda(\A)^{-k}\right)^{\lfloor n/k\rfloor}\\
			=&\gamma^{\lfloor n/k\rfloor}.
		\end{align*}
		Hence $$\L_{d}\left(\left\{x:(T_{j_n}\circ \cdots \circ T_{j_1})(x)\in x_n+E_n\textrm{ for some }(j_1,\ldots,j_n)\in \I^n\right\}\right)\ll \gamma^{\lfloor n/k\rfloor}$$
		We know that $\gamma\in(0,1)$, hence it follows from the estimate above that
		\begin{align*}
			&\sum_{n=1}^{\infty}\L_{d}\left(\left\{x:(T_{j_n}\circ \cdots \circ T_{j_1})(x)\in x_n+E_n\textrm{ for some }(j_1,\ldots,j_n)\in \I^n\right\}\right)\\
			\ll&\sum_{n=1}^{\infty}\gamma^{\lfloor n/k\rfloor}<\infty.
		\end{align*}
		Applying the Borel Cantelli lemma our result follows.
		
		Now we bring our attention to proving the recurrence result. Again we assume that $(E_n)$ is a sequence of Borel sets satisfying $\L_{d}(E_n)\ll \lambda(\A)^{-n}$. Recall that each $S_i$ is of the form $S_{i}(x)=A_{i}x+t_i$. Therefore $T_{i}(x)=S_{i}^{-1}(x)=A_{i}^{-1}(x-t_i).$ Similarly we have $$T_{\i}(x)=A_{\i}^{-1}x-\sum_{k=1}^{|\i|}A_{i_1\ldots i_{k}}^{-1}t_{i_{k}}$$for any $\i\in \I^*.$ Using this identity, we observe the following for any $\i\in \I^n$:
		\begin{align*}
			T_{\i}(x)-x\in E_n
			\iff &A_{\i}^{-1}(x)-x\in \sum_{k=1}^{n}A_{i_1\ldots i_{k}}^{-1}t_{i_{k}}+E_n \\
			\iff &x\in (A_{\i}^{-1}-I)^{-1}\left(\sum_{k=1}^{n}A_{i_1\ldots i_{k}}^{-1}t_{i_{k}}+E_n\right).
		\end{align*}
		Recall that $(A_{\i}^{-1}-I)^{-1}=\sum_{l=1}^{\infty}A_{\i}^{l}.$ It is useful to think of the operator $\sum_{l=1}^{\infty}A_{\i}^{l}$ as $A_{\i}$ composed with $\sum_{l=1}^{\infty}A_{\i}^{l-1}.$ Because $\|A_i\|<1$ for all $i\in \I$, the operator $\sum_{l=1}^{\infty}A_{\i}^{l-1}$ satisfies $$\L_{d}\left(\sum_{l=1}^{\infty}A_{\i}^{l-1}E\right)\ll \L_{d}(E)$$ for any Borel set $E$. Now applying $A_{\i}$ to $\sum_{l=1}^{\infty}A_{\i}^{l-1}E$ we see that 
		\begin{equation}
			\label{Determinant is bounding}
			\L_{d}\left(\sum_{l=1}^{\infty}A_{\i}^{l}E\right)\ll |Det(A_{\i})|\L_{d}(E)
		\end{equation} for any Borel set $E$. Using \eqref{Determinant is bounding}, together with our assumption $\L_{d}(E_n)\ll \lambda(\A)^{-n}$, it follows that for any $\i \in \I^n$ we have
		\begin{align}
			\label{Recurrence measure}
			\L_{d}\left(\left\{x:T_{\i}(x)-x\in E_n\right\}\right)
			=&\L_{d}\left(\left\{x\in (A_{\i}^{-1}-I)^{-1}\left(\sum_{k=1}^{n}A_{i_1\ldots i_{k}}^{-1}t_{i_{k}}+E_n\right)\right\}\right)\nonumber \\
			=&\L_{d}\left(\left\{x\in \sum_{l=1}^{\infty}A_{\i}^{l}\left(\sum_{k=1}^{n}A_{i_1\ldots i_{k}}^{-1}t_{i_{k}}+E_n\right)\right\}\right)\nonumber \\
			\ll & |Det(A_{\i})|\L_{d}\left(\sum_{k=1}^{n}A_{i_1\ldots i_{k}}^{-1}t_{i_{k}}+E_n\right)\nonumber\\
			\ll & |Det(A_{\i})|\lambda(\A)^{-|n| }.
		\end{align}
		We now observe that for any $n>k$ we have 
		\begin{align*}
			&\left\{x:(T_{j_n}\circ \cdots T_{j_1})(x)-x\in E_n\textrm{ for some }(j_1,\ldots,j_n)\in \I^n\right\}\\
			=&\bigcup_{\i \in (\I^{k}\setminus\{\j\})^{\lfloor n/k\rfloor}}\bigcup_{\i' \in \I^{n-k\cdot \lfloor n/k\rfloor}}\left\{x:T_{\i' \i}(x)-x\in E_n \right\}.
		\end{align*}
		Using this equality and \eqref{Recurrence measure}, the rest of the proof follows from an analogous argument to that used to prove that $W(\S,\x,(E_n))$ has zero Lebesgue measure. 
	\end{proof}
	Theorem \ref{Exact overlap} is straightforward but it exhibits one of the main difficulties in proving our theorems. For many choices of $\{A_i\}_{i\in \I},$ there is a dense set of $\tt\in\mathbb{R}^{\#\I \cdot d}$ such that the corresponding IFS $S_{\tt}$ admits two distinct words $\i,\j\in \I^{*}$ satisfying $S_{\i,\tt}=S_{\j,\tt}$. This means that there is a dense set of exceptions for which the conclusions of Theorems \ref{Main thm} and \ref{exponential theorem} do not hold. This set of exceptions is what makes our analysis challenging. The following statement shows that the presence of two distinct words satisfying $S_{\i}=S_{\j}$ is not the only mechanism preventing positive measure from occurring; it shows that if the cylinder sets are aligned in an extreme manner, then this can lead to our limsup sets having zero Lebesgue measure.
	
	\begin{thm}
		For each $1\leq l\leq d$, let $\S_{l}=\{S_{i}\}_{i\in \I_{l}}$ be an IFS acting on $\mathbb{R}$ satisfying the following properties:
		\begin{itemize}
			\item There exists $\lambda_{l}\in (0,1)$ such that for each $i\in \I_{l}$ we have $S_{i}(x)=\lambda_{l}x+t_{i}$ for some $t_{i}\in \mathbb{R}$.
			\item $[0,1]=\cup_{i\in \I_{l}}S_{i}([0,1])$. 
		\end{itemize}
		Let $\mathfrak{S}$ be the product IFS acting on $\mathbb{R}^d$ given by $$\mathfrak{S}=\{S_{(i_1,\ldots,i_d)}(\textbf{x})=(\lambda_1x_1 +t_{i_1},\ldots, \lambda_{d}x_d+t_{i_d})\}_{(i_1,\ldots,i_d)\in \I_1\times \cdots \times \I_d}.$$ Assume that there exists $1\leq l_1<l_2<d$ such that $\#\I_{l_1}\cdot \lambda_{l_1}<\#\I_{l_2}\cdot \lambda_{l_2}.$ Then for any sequence $\x\in ([0,1]^d)^{\N},$ we have $\L_{d}(W(\mathfrak{S},\x,1))=0.$ Similarly $\L_{d}(R(\mathfrak{S},1)))=0.$ 
	\end{thm}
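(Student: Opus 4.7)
The plan is to exploit the product structure by projecting the problem onto the coordinate where the competition between branching and contraction is least favourable. Since each matrix is the common diagonal $\mathrm{diag}(\lambda_1,\ldots,\lambda_d)$, one computes
$$\lambda(\A)=\sum_{\i\in\I_1\times\cdots\times\I_d}|Det(A_{\i})|=\prod_{l=1}^{d}\#\I_l\cdot\prod_{l=1}^{d}\lambda_l=\prod_{l=1}^{d}\alpha_l,$$
with the shorthand $\alpha_l:=\#\I_l\cdot\lambda_l$. The hypothesis $\alpha_{l_1}<\alpha_{l_2}$ forces the $\alpha_l$ not all to be equal, so the strict AM--GM inequality yields
$$\min_{1\leq l\leq d}\alpha_l<\Bigl(\prod_{l=1}^{d}\alpha_l\Bigr)^{1/d}=\lambda(\A)^{1/d}.$$
Fix $l^*$ achieving this minimum and let $\Pi_{l^*}\colon\R^d\to\R$ denote projection onto the $l^*$-th coordinate.

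For the shrinking target statement, the target radius is $r_N:=\lambda(\A)^{-N/d}$. If $x\in W(\mathfrak{S},\x,1)$ then for infinitely many $N$ there exists a word $\i\in(\I_1\times\cdots\times\I_d)^N$ satisfying $T_{\i}(x)\in B(x_N,r_N)$; projecting onto coordinate $l^*$ and writing $\i^{l^*}\in\I_{l^*}^{N}$ for the corresponding string in alphabet $\I_{l^*}$ gives $T_{\i^{l^*}}(\Pi_{l^*}(x))\in B(\Pi_{l^*}(x_N),r_N)$. Hence $\Pi_{l^*}(x)\in\limsup_N G_N$ where
$$G_N:=\bigcup_{\i^{l^*}\in\I_{l^*}^{N}}S_{\i^{l^*}}\bigl(B(\Pi_{l^*}(x_N),r_N)\bigr).$$
Each $S_{\i^{l^*}}$ contracts by $\lambda_{l^*}^N$, so $\L_{1}(G_N)\leq 2\,\#\I_{l^*}^{N}\lambda_{l^*}^{N}r_N=2\alpha_{l^*}^N/\lambda(\A)^{N/d}$, which is summable by the choice of $l^*$. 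Borel--Cantelli then gives $\L_{1}(\limsup_N G_N)=0$, and since $W(\mathfrak{S},\x,1)\subseteq\Pi_{l^*}^{-1}(\limsup_N G_N)$, Fubini's theorem yields $\L_{d}(W(\mathfrak{S},\x,1))=0$.

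For the recurrence set $R(\mathfrak{S},1)$ the projection argument is identical: $T_{\i}(x)-x\in B(0,r_N)$ implies $T_{\i^{l^*}}(\Pi_{l^*}(x))-\Pi_{l^*}(x)\in B(0,r_N)$. Following the one-dimensional calculation from the proof of Theorem \ref{Exact overlap}, the single-word set $\{y\in\R:T_{\i^{l^*}}(y)-y\in B(0,r_N)\}$ has length $\ll\lambda_{l^*}^{N}r_N$ (since the expanding map $y\mapsto\lambda_{l^*}^{-N}y-c$ has inverse of Lipschitz constant $\asymp\lambda_{l^*}^{N}$), and summing over the $\#\I_{l^*}^{N}$ choices of $\i^{l^*}$ recovers the same summable bound $\alpha_{l^*}^N/\lambda(\A)^{N/d}$. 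Borel--Cantelli and Fubini then conclude $\L_{d}(R(\mathfrak{S},1))=0$.

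The conceptual heart of the argument is the observation that along coordinate $l^*$ the target radius $r_N=\lambda(\A)^{-N/d}$ reflects the geometric mean of the $\alpha_l$, while the covering scale in that direction is controlled by the smallest $\alpha_l$; the strict AM--GM inequality forces $\alpha_{l^*}<\lambda(\A)^{1/d}$, so the projected cover has exponentially summable length. I do not foresee any real technical obstacle; the only point to check carefully is that $W$ and $R$ lie inside $\Pi_{l^*}$-preimages of Borel limsup sets of linear measure zero, so that Fubini applies directly.
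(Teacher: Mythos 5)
Your proposal is correct and follows essentially the same route as the paper: both single out the coordinate $l^*$ minimising $\#\I_l\cdot\lambda_l$, note that the strict inequality among these quantities forces $\#\I_{l^*}\lambda_{l^*}<\lambda(\A)^{1/d}$, bound the $N$-th level union through that coordinate (the paper by enclosing it in a thin one-dimensional union times $[0,1]^{d-1}$, you by projecting and invoking Fubini, which is the same estimate), and finish with Borel--Cantelli. Your explicit one-dimensional computation for the recurrence set, via the interval of length $\asymp\lambda_{l^*}^{N}r_N$ per word, correctly fills in the step the paper omits and matches its intended argument.
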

	
	\begin{proof}
		We begin our proof by remarking that for the product IFS $\mathfrak{S}$ the self-affine set is $[0,1]^d$. We also remark that for this IFS we have $$\lambda(\A)=\prod_{l=1}^{d}\# \I_{l} \cdot \lambda_{l}.$$ Without loss of generality, we can assume that $\#\I_{1}\cdot \lambda_{1}=\min_{1\leq l\leq d}\{\#\I_{l}\cdot \lambda_{l}\}$ and that $\#\I_{1}\cdot \lambda_{1}<\#\I_{2}\cdot \lambda_{2}.$ These statements imply that
		$$\# \I_{1}\cdot \lambda_{1}<\left(\prod_{l=1}^d\# \I_{l}\cdot \lambda_{l}\right)^{1/d},$$ and therefore
		\begin{equation}
			\label{narrow rectangles}
			\frac{\# \I_{1}\cdot \lambda_{1}}{\lambda(\A)^{1/d}}<1.
		\end{equation} Let us now fix a sequence $\x=(x_n)\in ([0,1]^d)^{\N}.$ For any $\i \in (\I_1\times \cdots \times \I_{d})^n$ the first coordinate of $S_{\i}(x_n)$ is uniquely determined by $x_n$ and the $\I_{1}$ components in $\i$. This implies that 
		\begin{align*}
			&\left(\bigcup_{\i\in (\I_1\times \cdots \times \I_{d})^n} S_{\i}\left(B\left(x_n,\frac{1}{\lambda(\A)^{n/d}}\right)\right)\right)\cap [0,1]^d\\
			\subseteq & \left(\bigcup_{\i\in (\I_1\times\cdots\times\I_d)^n}S_{\i}(x_n)+\prod_{1\leq l\leq d}\left[\frac{-\lambda_{l}^n}{\lambda(\A)^{n/d}},\frac{\lambda_{l}^n}{\lambda(\A)^{n/d}}\right]\right)\cap [0,1]^d\\
			\subseteq&\left(\bigcup_{\j\in \I_1^n}S_{\j}(\pi_{1}(x_n))+\left[\frac{-\lambda_{1}^n}{\lambda(\A)^{n/d}},\frac{\lambda_{1}^n}{\lambda(\A)^{n/d}}\right]\right)\times [0,1]^{d-1}.
		\end{align*} 
		Here $\pi_{1}:\mathbb{R}^d\to\mathbb{R}$ is the projection map to the first coordinate. It follows from the above that 
		$$\L_{d}\left(\left(\bigcup_{\i\in (\I_1\times \cdots \times \I_{d})^n} S_{\i}\left(B\left(x_n,\frac{1}{\lambda(\A)^{n/d} }\right)\right)\right)\cap [0,1]^d\right)\leq \frac{2\# \I_{1}^n\cdot \lambda_{1}^n}{\lambda(\A)^{n/d}}.$$
		Using this inequality, together with \eqref{narrow rectangles}, we obtain $$\sum_{n=1}^{\infty}\L_{d}\left(\left(\bigcup_{\i\in (\I_1\times \cdots \times \I_{d})^n} S_{\i}\left(B\left(x_n,\frac{1}{\lambda(\A)^{n/d} }\right)\right)\right)\cap [0,1]^d\right)\ll \sum_{n=1}^{\infty}\frac{\# \I_{1}^n\cdot \lambda_{1}^n}{\lambda(\A)^{n/d}}<\infty.$$ The fact $\L_{d}(W(\mathfrak{S},(\x_n),1))=0$ now follows from the Borel-Cantelli lemma. 
		
		The proof that $\L_{d}(R(\mathfrak{S},1)))=0$ follows by a similar application of the Borel-Cantelli lemma. We omit the proof of this statement. 
		
	\end{proof}
	The following statement shows that in the statement of Theorem \ref{Main thm}, it is absolutely necessary to include a divergence assumption on the function $h$.
	\begin{thm}
		\label{Convergent sum}
		Let $\S$ be an iterated function system and assume that $h:\N\to [0,\infty)$ satisfies $\sum_{n=1}^{\infty}h(n)<\infty$. Then $W(\S,\x,h)$ has zero Lebesgue measure for any $\x\in X^{\N},$ and $R(\S,h)$ has zero Lebesgue measure.
	\end{thm}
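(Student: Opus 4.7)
The plan is a direct application of the convergent Borel--Cantelli lemma to each of the two limsup sets, driven by the determinant identity $\sum_{\i\in \I^n} |Det(A_\i)| = \lambda(\A)^n$, which is immediate from \eqref{eq:lambda} and the multiplicativity of $Det$.

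First I would handle $W(\S,\x,h)$. Setting $r_n = (h(n)/\lambda(\A)^n)^{1/d}$ and
$$A_n := \bigcup_{\i\in \I^n} S_\i\bigl(B(x_n,r_n)\bigr),$$
one sees at once that $W(\S,\x,h)=\limsup_n A_n$, since for $\i=(i_1,\ldots,i_n)$ the condition $(T_{i_n}\circ\cdots\circ T_{i_1})(x)\in B(x_n,r_n)$ is equivalent to $x\in S_{\i}(B(x_n,r_n))$, and having i.m.\ words in $\bigcup_n \I^n$ satisfy this condition is equivalent to lying in $A_n$ for infinitely many $n$ (each $\I^n$ being finite). Since each $S_\i$ is affine with linear part $A_\i$, it scales Lebesgue measure by $|Det(A_\i)|$, and a union bound together with the determinant identity gives $\L_d(A_n)\ll h(n)$. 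Convergence of $\sum_n h(n)$ then yields $\L_d(W(\S,\x,h))=0$ by Borel--Cantelli.

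For $R(\S,h)$ I would repeat the scheme with the centres depending on $x$. For each $\i\in \I^n$, the rearrangement carried out in the recurrence portion of the proof of Theorem~\ref{Exact overlap} rewrites $\{x:T_\i(x)\in B(x,r_n)\}$ as $(A_\i^{-1}-I)^{-1}(v_\i+B(0,r_n))$ for an explicit vector $v_\i$, and the estimate \eqref{Determinant is bounding} derived there gives
$$\L_d\bigl(\{x:T_\i(x)\in B(x,r_n)\}\bigr)\ll |Det(A_\i)|\cdot r_n^d = |Det(A_\i)|\cdot h(n)\lambda(\A)^{-n}.$$
Summing over $\i\in\I^n$ and invoking the determinant identity once more bounds the Lebesgue measure of the set of $x$ satisfying the recurrence condition for some word of length $n$ by a constant multiple of $h(n)$. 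A second application of Borel--Cantelli then finishes the argument.

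There is essentially no obstacle here: the needed estimates are already present in the proof of Theorem~\ref{Exact overlap}, and the only conceptual change is that summability of $h$ replaces the geometric decay of $\L_d(E_n)$ as the source of the convergent Borel--Cantelli sum. The one small bookkeeping point is the equivalence between ``i.m.\ words in $\I^*$'' and ``$x\in A_n$ for infinitely many $n$'', which holds because $\I^n$ is finite for each $n$.
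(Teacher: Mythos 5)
Your proposal is correct and follows essentially the same route as the paper: a union bound over $\I^n$ combined with the identity $\sum_{\i\in\I^n}|Det(A_\i)|=\lambda(\A)^n$ and the convergent Borel--Cantelli lemma for $W(\S,\x,h)$, and for $R(\S,h)$ the same reduction via the rearrangement and estimate \eqref{Determinant is bounding} from the proof of Theorem \ref{Exact overlap}, which is exactly what the paper invokes.
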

	
	\begin{proof}
		Let us fix $h:\N\to [0,\infty)$ satisfying $\sum_{n=1}^{\infty}h(n)<\infty$. We will only prove that $W(\S,\x,h)$ has zero Lebesgue measure for any $\x\in X^{\mathbb{N}}$. The proof that $R(\S,h)$ has zero Lebesgue measure follows using the same arguments used in the proof of Theorem \ref{Exact overlap}. 
		
		Let $\x=(x_n)\in X^{\mathbb{N}}$ be fixed. For any $n\in \N$ we have
		\begin{align*}
			&\L_{d}\left(\left\{x:T_{\i}(x)\in B\left(x_n,\left(\frac{h(n)}{\lambda(\A)^n}\right)^{1/d}\right)\textrm{ for some }\i\in \I^n\right\}\right)\\
			\leq &\sum_{\i\in \I^n}\L_{d}\left(S_{\i} \left(B\left(x_n,\left(\frac{h(n)}{\lambda(\A)^n}\right)^{1/d}\right)\right)\right)\\
			=&h(n)\L_{d}(B(0,1))\cdot \sum_{\i\in \I^n}\frac{|Det(A_\i)|}{\lambda(\A)^n}\\
			=&h(n)\L_{d}(B(0,1)).
		\end{align*}
		Therefore, by our assumption on $h$ we have
		$$\sum_{n=1}^{\infty}\L_{d}\left(x:T_{\i}(x)\in B\left(x_n,\left(\frac{h(n)}{\lambda(\A)^n}\right)^{1/d}\right)\textrm{ for some }\i\in \I^n\right)
		\ll  \sum_{n=1}^{\infty}h(n)<\infty.$$ The result now follows by the Borel Cantelli lemma. 
	\end{proof}

	\section{Proof of Theorem \ref{Main thm}}
	\label{Section 5}
	
	
	The first step in the proof of Theorem \ref{Main thm} is to obtain information about the distribution of the ellipses $$\left\{S_{\j,\tt}\left(B\left(\pi_{\tt}(\i),\frac{s}{\lambda(\A)^{n/d}}\right)\right)\right\}_{\j\in \I^n}$$ for small values of $s$ and for a typical $\tt$. The following lemma provides that information. It is convenient at this point to introduce some additional notation. Suppose a set of matrices $\A=\{A_i\}$ and a vector $\tt\in \R^{\# \I\cdot d}$ are given, then for each $\i \in \I^{\N}, s>0$ and $n\in \mathbb{N}$ we let $$B_{\tt}(s,\i,n):=B\left(\pi_{\tt}(\i),\frac{s}{\lambda(\A)^{n/d}}\right).$$ The following lemma is based upon an argument due to Benjamini and Solomyak \cite{BenSol}, which is in turn based upon an argument due to Peres and Solomyak \cite{PerSol}.
	
	\begin{lemma}
		\label{Counting pairs}
		Let $\{A_i\}$ be a collection of matrices satisfying the assumptions of Theorem \ref{Main thm}. Then for any $R>0$, $n\in\mathbb{N},$ $s>0$ and $\i\in \I^{\N},$ we have
		$$\int_{[-R,R]^{\#\I \cdot d}} \#\left\{\j,\k\in \I^n:\j\neq \k,\, S_{\j,\tt}\left(B_{\tt}(s,\i,n)\right)\cap S_{\k,\tt}\left(B_{\tt}(s,\i,n)\right)\neq \emptyset\right\}\, d\tt=\mathcal{O}_{R}(\#\I^n\cdot  s^d).$$
	\end{lemma}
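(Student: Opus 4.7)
My plan is to reduce the ellipsoid-intersection count to the transversality estimate of Lemma~\ref{Transverality estimate}. The key observation is that the hypothesis $A_i=A$ gives $A_{\j}=A^n$ for every $\j\in\I^n$, so the image $S_{\j,\tt}(B_{\tt}(s,\i,n))$ is simply the translate of the \emph{fixed} ellipsoid $A^n B(0,s/\lambda(\A)^{n/d})$ centred at $S_{\j,\tt}(\pi_{\tt}(\i))=\pi_{\tt}(\j\i)$. Consequently, the intersection condition for two such translates reduces to a single containment
$$
\pi_{\tt}(\j\i)-\pi_{\tt}(\k\i)\in E_n := A^n B\!\left(0,\tfrac{2s}{\lambda(\A)^{n/d}}\right),
$$
and $\L_d(E_n)\asymp|\det(A)|^n s^d/\lambda(\A)^n$.

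Since $\j\neq\k$ both have length $n$, the sequences $\j\i$ and $\k\i$ in $\I^{\N}$ agree in precisely their first $|\j\wedge\k|-1$ entries, and the common prefix of matrices is $A^{|\j\wedge\k|-1}$. I would then feed $E_n$ directly into Lemma~\ref{Transverality estimate} to bound the Lebesgue measure of the parameters $\tt\in[-R,R]^{\#\I\cdot d}$ for which a given pair $(\j,\k)$ causes a collision by $\ll_R|\det(A)|^{n-|\j\wedge\k|+1}s^d/\lambda(\A)^n$. A Fubini/Tonelli step then turns the integral in the statement into a sum of these per-pair contributions.

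The final step is a combinatorial sum organised by the value $m:=|\j\wedge\k|$. Elementary counting shows there are exactly $(\#\I-1)\#\I^{2n-m}$ ordered pairs with $|\j\wedge\k|=m$. The crucial algebraic ingredient is the identity $\#\I\cdot|\det(A)|=\lambda(\A)$, which is immediate from \eqref{eq:lambda} when $A_i=A$; substituting it, the level-$m$ contribution collapses to $\asymp \#\I^n s^d/\lambda(\A)^{m-1}$. Summing over $m=1,\dots,n$ then yields a geometric series with ratio $1/\lambda(\A)<1$, bounded independently of $n$, which gives the desired $\mathcal{O}_R(\#\I^n s^d)$.

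The main obstacle is precisely this bookkeeping: one must verify that the $|\det(A)|^n$ coming from $\L_d(E_n)$, the transversality loss $|\det(A)|^{-(|\j\wedge\k|-1)}$, and the pair count $\#\I^{2n-m}$ conspire so that the large factor $\lambda(\A)^n$ cancels the small factor $\lambda(\A)^{-n}$ exactly, leaving a sum over $m$ that converges only because $\lambda(\A)>1$. Once this alignment is confirmed, the proof is essentially a one-line application of Fubini plus a geometric-series estimate.
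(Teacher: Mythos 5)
Your proposal is correct and follows essentially the same route as the paper's proof: reduce each image $S_{\j,\tt}(B_{\tt}(s,\i,n))$ to a translate $\pi_{\tt}(\j\i)+E_n$ of a fixed ellipsoid, apply Lemma \ref{Transverality estimate} to the pair $\j\i,\k\i$ to get the per-pair bound $\ll_R |Det(A)|^{n-|\j\wedge\k|+1}s^d/\lambda(\A)^n$, and then sum over the common-prefix length, where the identity $\lambda(\A)=\#\I\cdot|Det(A)|$ and $\lambda(\A)>1$ give a convergent geometric series and the bound $\mathcal{O}_R(\#\I^n s^d)$. Your exact count $(\#\I-1)\#\I^{2n-m}$ of ordered pairs at prefix level $m$ is a slightly sharper version of the paper's bookkeeping, but the argument is the same.
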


	\begin{proof}
		Let $A$ be such that $A_{i}=A$ for all $i\in \I$. Such a matrix exists by our underlying assumptions. We start our proof by remarking that for any $n\in \N$ and $s>0,$ there exists an ellipse $$E_{n}=A^n\left(B\left(0,\frac{s}{\lambda(\A)^{n/d}}\right)\right),$$ such that for any $\i\in \I^{\N}$ and $\j\in \I^n,$ we have 
		\begin{equation}
			\label{introduce ellipse}
			S_{\j,\tt}\left(B_{\tt}(s,\i,n)\right)=\pi_{\tt}(\j \i)+E_n.
		\end{equation}
		We also remark that 
		\begin{equation}
			\label{En measure}
			\L_{d}(E_n)=\frac{|Det(A)|^ns^{d}\L_{d}(B(0,1))}{\lambda(\A)^n}.
		\end{equation}  It follows from \eqref{introduce ellipse} that for two words $\j,\k\in \I^n,$ we have $$S_{\j,\tt}(B_{\tt}(s,\i,n))\cap S_{\k,\tt}(B_{\tt}(s,\i,n))\neq \emptyset$$  if and only if $$\pi_{\tt}(\j\i)-\pi_{\tt}(\k\i)\in E_n-E_n.$$ Because $E_n$ is convex and symmetric around the origin, we have $E_n-E_n=2E_n.$ Therefore $$S_{\j,\tt}(B_{\tt}(s,\i,n))\cap S_{\k,\tt}(B_{\tt}(s,\i,n))\neq \emptyset$$  if and only if
		\begin{equation}
			\label{ellipse inclusion}
			\pi_{\tt}(\j\i)-\pi_{\tt}(\k\i)\in 2E_n.
		\end{equation}  
		We now rewrite our integral as follows
		\begin{align*}
			&\int_{[-R,R]^{\#\I \cdot d}} \#\left\{\j,\k\in \I^n:\j\neq \k,\, S_{\j,\tt}\left(B_{\tt}(s,\i,n)\right)\cap S_{\k,\tt}\left(B_{\tt}(s,\i,n)\right)\neq \emptyset\right\}\, d\tt\\
			=& \int_{[-R,R]^{\#\I \cdot d}} \sum_{\j\in \I^n}\sum_{k=1}^{n}\sum_{\k\in \I^n:|\j\wedge \k|=k}\chi_{\left\{\tt':S_{\j,\tt'}\left(B_{\tt}(s,\i,n)\right)\cap S_{\k,\tt'}\left(B_{\tt}(s,\i,n)\right)\neq \emptyset\right\}}(\tt)\, d\tt\\
			=& \int_{[-R,R]^{\#\I \cdot d}} \sum_{\j\in \I^n}\sum_{k=1}^{n}\sum_{\k\in \I^n:|\j\wedge \k|=k}\chi_{\left\{\tt':\pi_{\tt'}(\j\i)-\pi_{\tt'}(\k\i)\in 2E_n\right\}}(\tt)\, d\tt.
		\end{align*}In the final line we used the fact that $S_{\j,\tt}(B_{\tt}(s,\i,n))\cap S_{\k,\tt}(B_{\tt}(s,\i,n))\neq \emptyset$ is equivalent to \eqref{ellipse inclusion}.
		
		We remark that when $A_i=A$ for all $i\in \I$, then our formula for $\lambda(\A)$ can be simplified to $$\lambda(\A)=\# \I \cdot |Det(A)|.$$ Now using this identity together with Lemma \ref{Transverality estimate} and \eqref{En measure}, we obtain the following
		\begin{align*}
			&\int_{[-R,R]^{\#\I \cdot d}} \sum_{\j\in \I^n}\sum_{k=1}^{n}\sum_{\k\in \I^n:|\i\wedge \j|=k}\chi_{\left\{\tt':\pi_{\tt'}(\j\i)-\pi_{\tt'}(\k\i)\in 2E_n\right\}}(\tt)\, d\tt \\
			\ll_{R}& \sum_{\j\in \I^n}\sum_{k=1}^{n}\sum_{\k\in \I^n:|\i\wedge \j|=k} |Det(A)|^{-k+1}\frac{s^d}{\#\I^n}\\
			\ll_{R}&s^d\sum_{\i\in \I^n}\sum_{k=1}^{n} \frac{1}{\# \I^{k}|Det(A)|^{k-1}}\\
			\ll_{R}&s^d\sum_{\i\in \I^n}\sum_{k=1}^{n}\frac{1}{\lambda(\A)^k}\\
			\ll_{R}&s^d\sum_{\i\in \I^n}\sum_{k=1}^{\infty}\frac{1}{\lambda(\A)^k}\\
			\ll_{R}&s^d\sum_{\i\in \I^n}1\\
			\ll_{R}&s^d\#\I^n.
		\end{align*}
		In the penultimate line we used our assumption $\lambda(\A)>1$ to conclude that $\sum_{k=1}^{\infty}\lambda(\A)^{-k}=\O(1).$		
	\end{proof}

	\subsection{Proof of Theorem \ref{Main thm}.1}
	\begin{lemma}
		\label{All sequences}
		Let $\{A_i\}_{i\in \I}$ be a finite set of matrices satisfying the assumptions of Theorem \ref{Main thm} and let $R>0$ be arbitrary. For Lebesgue almost every $\tt\in [-R,R]^{\# \I\cdot d}$, for any $\epsilon>0$ there exists $c,s>0$ such that for any sequence $(x_n)\in X_{\tt}^{\N},$ if
		\begin{align*}
			T(\tt,s,n):=\max\left\{\# Y:Y\subset \I^n\,\textrm{ and } \{S_{\i,\tt}(x_n)\}_{\i \in Y} \textrm{ is }\left(s,A^n\left(B\left(0,\frac{1}{\lambda(\A)^{n/d}}\right)\right)\right)\textrm{-separated}\right\}
		\end{align*} 
		then $$\overline{d}\left(\{n:T(\tt,s,n)>c\cdot \#\I^n\}\right)\geq 1-\epsilon.$$
	\end{lemma}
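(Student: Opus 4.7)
The plan is to apply Lemma \ref{Density lemma} after a key preliminary observation that eliminates the dependence on the sequence $(x_n)$.

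First I would exploit the assumption that $A_i=A$ for all $i\in \I$. A direct computation gives $S_{\j,\tt}(x) = A^{|\j|}x + v_\j(\tt)$ with $v_\j(\tt):=\sum_{k=1}^{|\j|} A^{k-1} t_{j_k}$ a continuous function of $\tt$, so in particular
$$S_{\j,\tt}(x) - S_{\k,\tt}(x) = v_\j(\tt) - v_\k(\tt) \qquad \text{for all } \j,\k\in \I^n,\ x\in \R^d.$$
Since $E_n = A^n B(0,\lambda(\A)^{-n/d})$ is convex and centrally symmetric (an origin-centred ellipse), the intersection condition $(S_{\j,\tt}(x)+sE_n)\cap(S_{\k,\tt}(x)+sE_n)\neq\emptyset$ is equivalent to $v_\j(\tt)-v_\k(\tt)\in 2sE_n$, which is independent of $x$. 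Consequently, for fixed $\tt$ the quantity $T(\tt,s,n)$ does not depend on the choice of $(x_n)\in X_\tt^{\N}$, and it suffices to prove the statement for one convenient sequence, e.g.\ $x_n = \pi_\tt(\i_0)$ for an arbitrary fixed $\i_0\in \I^{\N}$.

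Next I would invoke Lemma \ref{Density lemma} with $\Omega=[-R,R]^{\#\I\cdot d}$ under Lebesgue measure, $R_n=\#\I^n$, continuous functions $f_{\j,n}(\tt):= S_{\j,\tt}(\pi_\tt(\i_0))=\pi_\tt(\j\i_0)$ for $\j\in \I^n$ (mapping into a fixed compact subset of $\R^d$ since $\|A\|<1/2$ and $\tt\in[-R,R]^{\#\I\cdot d}$), and the sets $E_n$ above. With this setup $\#R(\tt,s,n)$ counts exactly the pairs appearing in Lemma \ref{Counting pairs} applied to $\i=\i_0$, because $(f_{\j,n}(\tt)+sE_n)\cap(f_{\k,n}(\tt)+sE_n)\neq\emptyset$ coincides with $S_{\j,\tt}(B_\tt(s,\i_0,n))\cap S_{\k,\tt}(B_\tt(s,\i_0,n))\neq\emptyset$. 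Lemma \ref{Counting pairs} therefore gives
$$\int_{\Omega}\frac{\#R(\tt,s,n)}{R_n}\,d\tt \;\leq\; C_R\, s^d =: G(s),$$
and $G(s)\to 0$ as $s\to 0$. Lemma \ref{Density lemma} then produces, for Lebesgue almost every $\tt\in[-R,R]^{\#\I\cdot d}$ and every $\epsilon>0$, constants $c,s>0$ with $\overline{d}(\{n:T(\tt,s,n)>c\cdot\#\I^n\})\geq 1-\epsilon$. Combined with the first-paragraph observation that this estimate is insensitive to $(x_n)$, this is the desired conclusion.

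I do not foresee any serious obstacle: once the cancellation $S_{\j,\tt}(x)-S_{\k,\tt}(x)=v_\j(\tt)-v_\k(\tt)$ is noticed, the proof is a clean reduction to Lemmas \ref{Counting pairs} and \ref{Density lemma}. The hypothesis that all $A_i$ coincide is precisely what enables this cancellation; without it one would have to build in uniform control over $(x_n)$, which is presumably why the general setting of Theorem \ref{exponential theorem} is restricted to a single centre and leans on differentiation regularity.
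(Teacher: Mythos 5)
Your proposal is correct and follows essentially the same route as the paper: reduce to a single reference sequence via the observation that, since all $A_i=A$, the differences $S_{\j,\tt}(x)-S_{\k,\tt}(x)$ (equivalently, the configurations $\{S_{\i,\tt}(x_n)\}_{\i\in\I^n}$ up to translation) do not depend on $x$, and then apply Lemma \ref{Counting pairs} together with Lemma \ref{Density lemma}. The paper merely performs these two steps in the opposite order (first the almost-sure statement for a fixed sequence, then the translation argument to cover all $(x_n)$), which is an immaterial difference.
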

	\begin{proof}
		Let $(i_{n,m}')\in \I^{\N\times \N}$ be arbitrary. Applying Lemma \ref{Density lemma}\footnote{In this lemma take $\Omega=[-R,R]^{\# \I\cdot d}$, $\eta$ to be the Lebesgue measure restricted to $[-R,R]^{\# \I\cdot d},$ and for each $n\in \N$ let $\{f_{l,n}\}_{l=1}^{R_n}=\{f_{\i,n}\}_{\i\in \I^n}$ where $f_{\i,n}(\tt)=S_{\i,\tt}(\pi_{\tt}((i_{n,m}')_m))$.}  and Lemma \ref{Counting pairs}, we know that for Lebesgue almost every $\tt\in [-R,R]^{\# \I\cdot d},$ for any $\epsilon>0$ there exists $c,s>0$ such that if $T'(\tt,s,n)$ is given by 
		\[
		\max\left\{\# Y:Y\subset \I^n\,\textrm{ and } \left\{S_{\i,\tt}(\pi_\tt((i_{n,m}')_{m=1}^{\infty}))\right\}_{\i \in Y} \textrm{ is }\left(s,A^n\left(B\left(0,\frac{1}{\lambda(\A)^{n/d}}\right)\right)\right)\textrm{-separated}\right\}
		\]
		then 
		\[\overline{d}\left(\{n:T'(\tt,s,n)>c\cdot \#\I^n\}\right)\geq 1-\epsilon.\]
		In other words, our desired conclusion holds for the specific choice of sequence $(\pi_{\tt}((i_{n,m}')_m))_n$. To complete our proof, we now need to show that for any $\tt\in[-R,R]^{\# \I\cdot d}$ for which our desired conclusion holds for the specific sequence $(\pi_{\tt}((i_{n,m}')_m))_n$, the same conclusion simultaneously holds for any sequence $(x_n)\in X_{\tt}^{\N}$ for the same choice of parameters. This fact follows from the simple observation that for any $n\in \N$ and $\i\in \I^{n},$ we have 
		\[
		S_{\i,\tt}(x_n)-S_{\i,\tt}(\pi_{\tt}((i_{n,m}')_{m=1}^{\infty}))=A^nx_n-A^n\pi_{\tt}((i_{n,m}')_{m=1}^{\infty}).
		\]
		Crucially the right hand side of the above does not depend upon $\i$. This observation implies that for each $n\in\mathbb{N},$ $\tt\in \R^{\# \I\cdot d}$, and $x_n\in X_{\tt},$ the sets $\{S_{\i,\tt}(x_n)\}_{\i\in \I^n}$ and $\{S_{\i,\tt}(\pi_{\tt}((i_{n,m}')_{m=1}^{\infty}))\}_{\i\in \I^n}$ are translates of each other. Therefore
		\begin{align*}
			&	\max\left\{\# Y:Y\subset \I^n\,\textrm{ and } \{S_{\i,\tt}((i_{n,m}')_{m=1}^{\infty})\}_{\i \in Y} \textrm{ is }\left(s,A^n\left(B\left(0,\frac{1}{\lambda(\A)^{n/d}}\right)\right)\right)\textrm{-separated}\right\}\\
			=&\max\left\{\# Y:Y\subset \I^n\,\textrm{ and } \{S_{\i,\tt}(x_n)\}_{\i \in Y} \textrm{ is }\left(s,A^n\left(B\left(0,\frac{1}{\lambda(\A)^{n/d}}\right)\right)\right)\textrm{-separated}\right\}
		\end{align*} for any $n\in \N$ and $s>0$. Therefore our desired conclusion holding for the specific sequence $(\pi_{\tt}((i_{n,m}')_m))_n$ immediately implies the same conclusion for all $(x_n)\in X_{\tt}^{\N}$ for the same choice of parameters. This completes the proof.
	\end{proof}

	
	With Lemma \ref{All sequences} we can now prove Theorem \ref{Main thm}.1.
	\begin{proof}[Proof of Theorem \ref{Main thm}.1]
		To prove our statement, it suffices to show that the desired conclusion holds for Lebesgue almost every $\tt\in [-R,R]^{\#\I\cdot d}$ where $R>0$ is arbitrary. In what follows $R$ will be fixed. Now let $\tt$ belong to the full measure set of parameters for which the conclusion of Lemma \ref{All sequences} is satisfied. Let $\x=(x_n)\in X_{\tt}^{\N}$ and $h\in H$ be arbitrary. We now set out to prove that $W(S_{\tt},\x,h)$ has positive Lebesgue measure.
		
		It follows from the definition of $H$ (see \eqref{eq:H}) and Lemma \ref{All sequences}, that there exists $c,s>0$ such that if we let 
		$$T(n)=\max\left\{\# Y:Y\subset \I^n\,\textrm{ and } \{S_{\i,\tt}(x_n)\}_{\i \in Y} \textrm{ is }\left(s,A^n\left(B\left(0,\frac{1}{\lambda(\A)^{n/d}}\right)\right)\right)-\textrm{separated}\right\}$$ for each $n\in \N$, and $$\mathcal{N}=\{n\in \N:T(n)\geq c\cdot \# \I^n\}$$ then $$\sum_{n\in\mathcal{N}}h(n)=\infty.$$
		We now fix such a $c$ and $s$. 
		
		For each $n\in \mathcal{N}$ we let $W_n\subset \I^{n}$ be a set of words satisfying $\# W_n=T(n)$ and $$\{S_{\i,\tt}(x_n)\}_{\i \in W_n}\textrm{ is }\left(s,A^n\left(B\left(0,\frac{1}{\lambda(\A)^{n/d}}\right)\right)\right)-\textrm{separated}.$$ Instead of working directly with Euclidean balls it is more convenient to work with balls defined using the supremum norm. As such, replacing $s$ with a potentially smaller constant if necessary, we may assume that for each $n\in \N$ we have that 
		\begin{equation}
			\label{separated rectangles}
			\{S_{\i,\tt}(x_n)\}_{\i \in W_n}\textrm{ is }\left(s,A^n\left(\prod_{i=1}^{d}\left[\frac{-1}{\lambda(\A)^{n/d}},\frac{1}{\lambda(\A)^{n/d}}\right]\right)\right)-\textrm{separated}.
		\end{equation}

		Let $h':\N\to [0,\infty)$ be defined according to the rule $h'(n)=\min\{h(n),s^d\}.$ It follows immediately from the definition of $h'$ that for $n\in\mathcal{N}$ and $\i\in W_{n},$ we have 
		\begin{equation}
			\label{InclusionA}	S_{\i,\tt}\left(B\left(x_n,\left(\frac{h'(n)}{\lambda(\A)^n}\right)^{1/d}\right)\right)\subseteq S_{\i,\tt}\left(B\left(x_n,\left(\frac{h(n)}{\lambda(\A)^n}\right)^{1/d}\right)\right).
		\end{equation}
		Moreover, by the definition of $W_n,$ we also know that for  distinct $\i$ and $\j$ in $W_n$ we have \begin{equation}
			\label{Disjoint}S_{\i,\tt}\left(B\left(x_n,\left(\frac{h'(n)}{\lambda(\A)^n}\right)^{1/d}\right)\right)\bigcap S_{\j,\tt}\left(B\left(x_n,\left(\frac{h'(n)}{\lambda(\A)^n}\right)^{1/d}\right)\right)=\emptyset.
		\end{equation} It also follows from the definition that  $\sum_{n\in \mathcal{N}}h'(n)=\infty$.

		
		Again motivated by a desire to work with balls defined with respect to the supremum norm rather than the Euclidean norm, we define $\tilde{h}:\N\to [0,\infty)$ according to the rule $\tilde{h}(n)=h'(n)/d$. Let us denote 
		\[
		\mathcal H(n)=\left(\frac{\tilde h(n)}{\lambda(\A)^n}\right)^{\tfrac 1d}.
		\]We observe that  
		\begin{equation}
			\label{InclusionB}x_n+\prod_{i=1}^d\left[-\mathcal H(n),\mathcal H(n)\right]\subseteq B\left(x_n,\left(\frac{h'(n)}{\lambda(\A)^{n}}\right)^{1/d}\right)
		\end{equation} for all $n\in \N$. Crucially, since $\sum_{n\in \mathcal{N}}h'(n)=\infty$ we have 
		\begin{equation}
			\label{Measure divergence}
			\sum_{n\in \mathcal{N}}\tilde{h}(n)=\infty.
		\end{equation}
		For each $n\in \mathbb{N}$ we let 
		\[
		Z_{n}:=\bigcup_{\i\in W_n}S_{\i,\tt}\left(x_n+\prod_{i=1}^d\left[-\mathcal H(n), \mathcal H(n)\right]\right).
		\]
		It follows from \eqref{Disjoint} and \eqref{InclusionB} that the union defining $Z_n$ is disjoint. To prove our result, we will study the set $$Z_{\infty}:=\bigcap_{l=1}^{\infty}\bigcup_{n\in \mathcal{N}:n\geq l}Z_n.$$ It follows from \eqref{InclusionA} and \eqref{InclusionB} that $$Z_{\infty}\subseteq W(\S_{\tt},(x_n)_{n=1}^{\infty},h).$$ Therefore to prove our result, it suffices to show that $Z_{\infty}$ has positive Lebesgue measure. To do this we will use Lemma \ref{Quasi-independence on average}. We start by proving that the divergence assumption of Lemma \ref{Quasi-independence on average} is satisfied. The union defining $Z_n$ is disjoint and hence for each $n\in \mathcal{N}$ we have 
		\begin{equation}
			\label{Zn measure}
			\L_{d}(Z_n)=\sum_{\i\in W_n}\frac{|Det(A)|^n\tilde{h}(n)\L_{d}(B(0,1))}{\lambda(\A)^n}\asymp \frac{\# \I^n\cdot |Det(A)|^n\tilde{h}(n)\L_{d}(B(0,1))}{\lambda(\A)^n}\asymp \tilde{h}(n).
		\end{equation}
		The first $\asymp$ relation follows as $\#W_{n}\asymp \# \I^n$, and the second uses that $\lambda(\A)=\# \I\cdot |Det(\A)|$ when $A_i=A$ for all $i\in \I$.

		It follows now from \eqref{Measure divergence} and \eqref{Zn measure} that  $\sum_{n\in\mathcal{N}}\L_{d}(Z_n)=\infty.$ Hence we satisfy the first criterion of Lemma \ref{Quasi-independence on average}. It remains now to obtain meaningful bounds for $\L_{d}(Z_n\cap Z_{n'})$ for distinct $n,n'\in \mathcal{N}$. This we do below.

		We begin by remarking that since $A$ is a positive diagonal matrix, there exists $\lambda_1,\lambda_2,\ldots,\lambda_d\in (0,1)$ such that
		$$A=\begin{bmatrix}
			\lambda_1 &  &  & \\
			& \lambda_2 & &\\
			&  & \ddots & \\
			&  &  & \lambda_{d}
		\end{bmatrix}.$$
		For each $n\in \mathcal{N}$ and $\i\in W_n,$ we have $$S_{\i,\tt}\left(x_n+\prod_{i=1}^d\left[-\mathcal H(n),\mathcal H(n)\right]\right)=S_{\i,\tt}(x_n)+E_n$$ where $$E_{n}=\prod_{i=1}^{d}\left[-\lambda_i^n\mathcal H(n),\lambda_i^n\mathcal H(n)\right].$$ We also let $$E_{n}'=\prod_{i=1}^{d}\left[-\frac{s\lambda_i^n}{\lambda(\A)^{n/d}},\frac{s\lambda_i^n}{\lambda(\A)^{n/d}}\right].$$ Importantly we have $E_n\subset E_{n}'$ for all $n\in\N$. It also follows from \eqref{separated rectangles} that for $n\in \mathcal{N}$ and distinct $\i,\j\in W_n,$ we have
		$$\left(S_{\i,\tt}(x_n)+E_n'\right)\cap \left( S_{\j,\tt}(x_n)+E_{n}'\right)=\emptyset.$$
		
		We now set out to bound $\L_{d}(Z_n\cap Z_{n'})$ for $n'>n$. We split our analysis into two cases. Without loss of generality we may assume that $\lambda_{1}=\max_{1\leq i\leq d}\{\lambda_i\}$.
		\medskip

		\noindent\textbf{Case 1. $n<n'\leq n+\lfloor \frac{1/d \log \tilde{h}(n)-\log s}{\log \lambda_1/\lambda(\A)^{1/d}}\rfloor.$}	
		
		If $n'$ satisfies $n<n'\leq n+\lfloor \frac{1/d \log \tilde{h}(n)-\log s}{\log \lambda_1/\lambda(\A)^{1/d}}\rfloor$ then \begin{equation}
			\label{thick rectangle}
			\frac{\lambda_{1}^{n'}s}{\lambda(\A)^{n'/d}}\geq \lambda_{1}^n\mathcal H(n).
		\end{equation} Put more informally, \eqref{thick rectangle} means that the rectangle $E_{n'}'$ is wider in the first coordinate than $E_{n}.$ 
		
		
		If $n<n'\leq n+\lfloor \frac{1/d \log \tilde{h}(n)-\log s}{\log \lambda_1/\lambda(\A)^{1/d}}\rfloor$ so \eqref{thick rectangle} holds, then for each $\i\in W_n$ if $\j\in W_{n'}$ is such that $$\left(S_{\j,\tt}(x_{n'})+E_{n'}'\right)\cap \left(S_{\i,\tt}(x_n)+E_n\right)\neq \emptyset$$ then 
		\begin{align*}
			S_{\j,\tt}(x_{n'})+E_{n'}'\subseteq\left(S_{\i,\tt}(x_n)+\prod_{i=1}^{d}\left[-\lambda_i^n \mathcal H(n)-\frac{2\lambda_{i}^{n'}s}{\lambda(\A)^{n'/d}},\lambda_i^n\mathcal H(n)+\frac{2\lambda_{i}^{n'}s}{\lambda(\A)^{n'/d}}\right]\right)
		\end{align*}Now using \eqref{thick rectangle} we see that if $\j\in W_{n'}$ is such that $$\left(S_{\j,\tt}(x_{n'})+E_{n'}'\right)\cap \left(S_{\i,\tt}(x_n)+E_n\right)\neq \emptyset,$$  then
		\begin{align*}
			&S_{\j,\tt}(x_{n'})+E_{n'}'\\
			\subseteq & \left(S_{\i,\tt}(x_n)+\left(\left[-\frac{3\lambda_{1}^{n'}s}{\lambda(\A)^{n'/d}},\frac{3\lambda_{1}^{n'}s}{\lambda(\A)^{n'/d}}\right]\times\prod_{i=2}^{d}\left[-\lambda_i^n\mathcal H(n)-\frac{2\lambda_{i}^{n'}s}{\lambda(\A)^{n'/d}},\lambda_i^n\mathcal H(n)+\frac{2\lambda_{i}^{n'}s}{\lambda(\A)^{n'/d}}\right]\right)\right)
		\end{align*} Now using the fact that the elements of the set $\{S_{\j,\tt}(x_{n'})+E_{n'}'\}_{\j\in W_{n'}}$ are disjoint, it follows from a volume argument that
		\begin{align}
			\label{Counting rectangles}
			&\#\left\{\j\in W_{n'}:\left(S_{\j,\tt}(x_{n'})+E_{n'}'\right)\cap \left(S_{\i,\tt}(x_n)+E_n\right)\neq \emptyset\right\}\nonumber\\
			\ll & \frac{\left(\frac{3\lambda_{1}^{n'}s}{\lambda(\A)^{n'/d}}\times \prod_{i=2}^{d}\left(\lambda_i^n\mathcal H(n)+\frac{2\lambda_{i}^{n'}s}{\lambda(\A)^{n'/d}}\right)\right)}{\prod_{i=1}^{d}\frac{2\lambda_{i}^{n'}s}{\lambda(\A)^{n'/d}}}\nonumber\\
			\ll &\prod_{i=2}^d\left (\lambda_i^n\mathcal H(n)\left(\frac{\lambda_i^{n'} s}{\lambda(\A)^{n'/d}}\right)^{-1}+1\right)
		\end{align}
		Using \eqref{Counting rectangles}, we now see that 
		\begin{align*}
			\L_{d}(Z_n\cap Z_{n'})=&\sum_{\i\in W_n}\L_{d}\left((S_{\i,\tt}(x_n)+E_n)\cap Z_{n'}\right)\\
			\leq &\sum_{\i \in W_n} \# \left\{\j\in W_{n'}:\left(S_{\j,\tt}(x_{n'})+E_{n'}'\right)\cap \left(S_{\i,\tt}(x_n)+E_n\right)\neq \emptyset\right\}\L_{d}(E_{n'})\\
			\ll &\# W_n\times \frac{\tilde{h}(n')|Det(A)|^{n'}}{\lambda(\A)^{n'}}\times\prod_{i=2}^d\left (\lambda_i^n\mathcal H(n)\left(\frac{\lambda_i^{n'} s}{\lambda(\A)^{{n'}/d}}\right)^{-1}+1\right).
		\end{align*}
		To continue, note that $\#W_n\asymp (\#\I)^n.$ Using this fact together with the identity $\lambda(\A)=\# \I\cdot  |Det(A)|$, we see that the above satisfies
		\begin{align*}
			&\# W_n\times \frac{\tilde{h}(n')|Det(A)|^{n'}}{\lambda(\A)^{n'}}\times\prod_{i=2}^d\left (\lambda_i^n\mathcal H(n)\left(\frac{\lambda_i^{n'} s}{\lambda(\A)^{{n'}/d}}\right)^{-1}+1\right)\\
			\ll &  \frac{\tilde{h}(n')}{\# \I^{n'-n}}\times\prod_{i=2}^d\left (\lambda_i^n\mathcal H(n)\left(\frac{\lambda_i^{n'} s}{\lambda(\A)^{n'/d}}\right)^{-1}+1\right)\\
			=&\frac{\tilde{h}(n')}{\# \I^{n'-n}}\left(1+\sum_{J\subset \{2,\ldots,d\}, J\neq\emptyset}\prod_{i\in J}\lambda_i^n\mathcal H(n)\left(\frac{\lambda_i^{n'} s}{\lambda(\A)^{n'/d}}\right)^{-1}\right)\\
			\ll&\frac{\tilde{h}(n')}{\# \I^{n'-n}}\left(1+\sum_{J\subset \{2,\ldots,d\}, J\neq\emptyset}\tilde{h}(n)^{\# J/d}\lambda(\A)^{(n'-n)\# J/d}\prod_{i\in J}\lambda_i^{n-n'}\right)\\
			\leq  &\frac{\tilde{h}(n')}{\# \I^{n'-n}}+\tilde{h}(n')\tilde{h}(n)^{1/d}\sum_{J\subset \{2,\ldots,d\}, J\neq\emptyset}\frac{\lambda(\A)^{(n'-n)\# J/d}}{\#\I^{n'-n}}\prod_{i\in J}\lambda_i^{n-n'},
		\end{align*}
		where in the penultimate line we have used the substitution $\mathcal H(n)=\left(\frac{\tilde{h}(n)}{\lambda(\A)^n}\right)^{1/d}$. Now making use of the identity $\lambda(\A)=\# \I\cdot  |Det(A)|$ again, as well as the fact that $|Det A|=\prod_{i=1}^d\lambda_i$, we obtain
		\begin{align*}
			&\frac{\tilde{h}(n')}{\# \I^{n'-n}}+\tilde{h}(n')\tilde{h}(n)^{1/d}\sum_{J\subset \{2,\ldots,d\}, J\neq\emptyset}\frac{\lambda(\A)^{(n'-n)\# J/d}}{\#\I^{n'-n}}\prod_{i\in J}\lambda_i^{n-n'}\\
			= & \frac{\tilde{h}(n')}{\# \I^{n'-n}}+\tilde{h}(n')\tilde{h}(n)^{1/d}\sum_{J\subset \{2,\ldots,d\}, J\neq\emptyset}\frac{\# \I^{(n'-n)\# J/d}|Det(A)|^{(n'-n)\# J/d}}{\#\I^{n'-n}}\prod_{i\in J}\lambda_i^{n-n'}\\
			= & \frac{\tilde{h}(n')}{\# \I^{n'-n}}+\tilde{h}(n')\tilde{h}(n)^{1/d}\sum_{J\subset \{2,\ldots,d\}, J\neq\emptyset}\# \I^{(n'-n) (\#J-d)/d}|Det(A)|^{(n'-n)\# J/d}\prod_{i\in J}\lambda_i^{n-n'}\\
			= & \frac{\tilde{h}(n')}{\# \I^{n'-n}}+\tilde{h}(n')\tilde{h}(n)^{1/d}\sum_{J\subset \{2,\ldots,d\}, J\neq\emptyset}\# \I^{(n'-n) (\#J-d)/d}\prod_{i=1}^d\lambda_i^{(n'-n)\# J/d}\prod_{i\in J}\lambda_i^{n-n'}\\
			\leq & \frac{\tilde{h}(n')}{\# \I^{n'-n}}+\tilde{h}(n')\tilde{h}(n)^{1/d}\sum_{J\subset \{2,\ldots,d\}, J\neq\emptyset}\# \I^{(n'-n) (\#J-d)/d}\prod_{i=1}^d\lambda_i^{(n'-n)\# J/d}\prod_{i=1}^d\lambda_i^{n-n'}\\
			=& \frac{\tilde{h}(n')}{\# \I^{n'-n}}+\tilde{h}(n')\tilde{h}(n)^{1/d}\sum_{J\subset \{2,\ldots,d\}, J\neq\emptyset}\# \I^{(n'-n) (\#J-d)/d}\prod_{i=1}^d\lambda_i^{(n'-n)(\# J-d)/d}\\
			=& \frac{\tilde{h}(n')}{\# \I^{n'-n}}+\tilde{h}(n')\tilde{h}(n)^{1/d}\sum_{J\subset \{2,\ldots,d\}, J\neq\emptyset}(\# \I \cdot |Det(A)|)^{(n'-n) (\#J-d)/d}\\
			=& \frac{\tilde{h}(n')}{\# \I^{n'-n}}+\tilde{h}(n')\tilde{h}(n)^{1/d}\sum_{J\subset \{2,\ldots,d\}, J\neq\emptyset}\lambda(\A)^{(n'-n) (\#J-d)/d}\\
			\leq& \frac{\tilde{h}(n')}{\# \I^{n'-n}}+\tilde{h}(n')\tilde{h}(n)^{1/d}\sum_{J\subset \{2,\ldots,d\}, J\neq\emptyset}\lambda(\A)^{-(n'-n)/d}\\
			\ll & \frac{\tilde{h}(n')}{\# \I^{n'-n}}+\tilde{h}(n')\tilde{h}(n)^{1/d}\lambda(\A)^{-(n'-n)/d}.
		\end{align*} 
		In the penultimate inequality we used that $\lambda(\A)^{(n'-n) (\#J-d)/d}\leq \lambda(\A)^{-(n'-n)/d}$ for any $J\subset \{2,\ldots,d\}$ such that $J\neq\emptyset$. In the final inequality we used that $\sum_{J\subset \{2,\ldots,d\}, J\neq\emptyset}1=\O(1).$ Summarising the above, we have shown that if $n<n'\leq n+\lfloor \frac{1/d \log h(n)-\log s}{\log \lambda_1/\lambda(\A)^{1/d}}\rfloor,$ then
		\begin{equation}
			\label{bound 1}
			\L_{d}(Z_n\cap Z_{n'})\ll \frac{\tilde{h}(n')}{\# \I^{n'-n}}+\tilde{h}(n')\tilde{h}(n)^{1/d}\lambda(\A)^{-(n'-n)/d}.
		\end{equation}
		\medskip
		
		\noindent \textbf{Case 2. $n'>n+\lfloor \frac{1/d \log \tilde{h}(n)-\log s}{\log \lambda_1/\lambda(\A)^{1/d}}\rfloor$.}

		We now concentrate on bounding $\L_{d}(Z_n\cap Z_n')$ for $n'>n+\lfloor \frac{1/d \log \tilde{h}(n)-\log s}{\log \lambda_1/\lambda(\A)^{1/d}}\rfloor.$ If $n'>n+\lfloor \frac{1/d \log \tilde{h}(n)-\log s}{\log \lambda_1/\lambda(\A)^{1/d}}\rfloor,$ then recalling the definition of $\mathcal H(n)$ and using that $\lambda_{1}=\max_{1\leq i\leq d}\{\lambda_{i}\}$ we have
		\begin{equation}
			\label{small rectangles}
			\frac{\lambda_{i}^{n'}s}{\lambda(\A)^{n'/d}}\leq  \lambda_{i}^n\left(\frac{\tilde{h}(n)}{\lambda(\A)^n}\right)^{1/d}\leq \lambda_i^n\mathcal H(n)
		\end{equation}for all $1\leq i\leq d$. Now by a similar argument to that given in Case 1, we have that if $\i\in W_n$ and
		$\j\in W_{n'}$ are such that $$\left(S_{\j,\tt}(x_{n'})+E_{n'}'\right)\cap \left(S_{\i,\tt}(x_n)+E_n\right)\neq \emptyset$$ then by \eqref{small rectangles} we have
		$$S_{\j,\tt}(x_{n'})+E_{n'}'\subseteq  \left(S_{\i,\tt}(x_n)+\prod_{i=1}^{d}\left[-3\lambda_i^n\mathcal H(n),3\lambda_i^n\mathcal H(n)\right]\right).
		$$ Therefore by a volume argument, we have that for any $\i\in W_{n}$
		\begin{align*}
			\#\left\{\j\in W_{n'}:\left(S_{\j,\tt}(x_{n'})+E_{n'}'\right)\cap \left(S_{\i,\tt}(x_n)+E_n\right)\neq \emptyset\right\}\ll & \frac{\prod_{i=1}^{d}\lambda_i^n\mathcal H(n)}{\prod_{i=1}^{d}\frac{\lambda_i^{n'}}{\lambda(\A)^{n'/d}}}. 
		\end{align*}
		Recalling the definition of $\mathcal H(n)$, we can continue from this estimate and obtain
		\begin{align*}		
			\#\left\{\j\in W_{n'}:\left(S_{\j,\tt}(x_{n'})+E_{n'}'\right)\cap \left(S_{\i,\tt}(x_n)+E_n\right)\neq \emptyset\right\}&\ll \tilde{h}(n) \frac{\prod_{i=1}^d\lambda_i^{n-n'}}{\lambda(\A)^{n-n'}}\\
			&\leq \tilde{h}(n)\frac{|Det(A)|^{n-n'}}{\# \I^{n-n'}|Det(A)|^{n-n'}}\\
			&= \frac{\tilde{h}(n)}{\#\I^{n-n'}}. 
		\end{align*}
		Using this bound, it follows that for $n'>n+\lfloor \frac{1/d \log h(n)-\log s}{\log \lambda_1/\lambda(\A)^{1/d}}\rfloor$ we have 
		\begin{align}
			\label{bound2}
			\L_{d}(Z_n\cap Z_{n'})
			\ll &\sum_{\i\in W_{n}}\#\left\{\j\in W_{n'}:\left(S_{\j,\tt}(x_{n'})+E_{n'}'\right)\cap \left(S_{\i,\tt}(x_n)+E_n\right)\neq \emptyset\right\}\cdot \L_{d}(E_{n'})\nonumber\\
			\ll &\sum_{\i\in W_{n}}\frac{\tilde{h}(n)}{\#\I^{n-n'}} \frac{|Det(A)|^{n'}\tilde{h}(n')}{\lambda(\A)^{n'}}\nonumber\\
			\leq & \#\I^{n}\frac{\tilde{h}(n)}{\#\I^{n-n'}} \frac{\tilde{h}(n')}{\# \I^{n'}}\nonumber \\
			=&\tilde{h}(n)\tilde{h}(n')
		\end{align}
		\medskip

		We are now ready to apply the estimates from Cases 1 and 2. Combining \eqref{bound 1} with \eqref{bound2}, we see that for any $n'>n$ we have 
		\begin{equation}
			\label{Universal bound}
			\L_{d}(Z_n\cap Z_n')\ll \frac{\tilde{h}(n')}{\# \I^{n'-n}}+\tilde{h}(n')\tilde{h}(n)^{1/d}\lambda(\A)^{-(n'-n)/d}+ \tilde{h}(n)\tilde{h}(n').
		\end{equation}
		Fix $Q\in \N$. Applying \eqref{Universal bound} we have
		\begin{align*}
			&\sum_{n,n'\in\mathcal{N}\cap\{1,\ldots,Q\}}\L_{d}(Z_n\cap Z_n')\\
			\ll & \sum_{n\in\mathcal{N}\cap \{1,\ldots,Q\}}\L_{d}(Z_n)+\underbrace{\sum_{n\in\mathcal{N}\cap\{1,\ldots,Q-1\}}\sum_{n'\in\mathcal{N}\cap\{n+1,\ldots,Q\}} \frac{\tilde{h}(n')}{\# \I^{n'-n}}}_{(A)}\\
			&+\underbrace{\sum_{n\in\mathcal{N}\cap\{1,\ldots,Q-1\}}\sum_{n'\in\mathcal{N}\cap\{n+1,\ldots,Q\}}\tilde{h}(n')\tilde{h}(n)^{1/d}\lambda(\A)^{-(n'-n)/d}}_{(B)}+ \underbrace{\sum_{n\in\mathcal{N}\cap\{1,\ldots,Q-1\}}\sum_{n'\in\mathcal{N}\cap\{n+1,\ldots,Q\}}\tilde{h}(n)\tilde{h}(n')}_{(C)}.
		\end{align*}
		We now bound terms (A), (B), and (C) individually. For term (A) we have 
		\begin{align}
			\label{Abound}
			\sum_{n\in\mathcal{N}\cap\{1,\ldots,Q-1\}}\sum_{n'\in\mathcal{N}\cap\{n+1,\ldots,Q\}} \frac{\tilde{h}(n')}{\# \I^{n'-n}}&=\sum_{n'\in \mathcal{N}\cap \{2,\ldots, Q\}}\sum_{n\in\mathcal{N}\cap \{1,\ldots, n'-1\}}\frac{\tilde{h}(n')}{\# \I^{n'-n}}\nonumber\\
			&=\sum_{n'\in \mathcal{N}\cap \{2,\ldots, Q\}}\tilde{h}(n')\sum_{n\in\mathcal{N}\cap \{1,\ldots, n'-1\}}\frac{1}{\# \I^{n'-n}}\nonumber\\
			&\ll \sum_{n'\in \mathcal{N}\cap \{2,\ldots, Q\}}\tilde{h}(n')\nonumber\\
			&\ll \sum_{n\in \mathcal{N}\cap \{1,\ldots, Q\}}\L_{d}(Z_n).
		\end{align}
		In the penultimate line we used that $\# \I > 1$ and therefore $\sum_{n\in\mathcal{N}\cap \{1,\ldots, n'-1\}}\frac{1}{\# \I^{n'-n}}=\mathcal{O}(1)$. In the final line we used \eqref{Zn measure}. Now focusing on term (B) we have 
		\begin{align}
			\label{Bbound}
			&\sum_{n\in\mathcal{N}\cap\{1,\ldots,Q-1\}}\sum_{n'\in\mathcal{N}\cap\{n+1,\ldots,Q\}}\tilde{h}(n')\tilde{h}(n)^{1/d}\lambda(\A)^{-(n'-n)/d}\\
			\ll& \sum_{n\in\mathcal{N}\cap\{1,\ldots,Q-1\}}\sum_{n'\in\mathcal{N}\cap\{n+1,\ldots,Q\}}\tilde{h}(n')\lambda(\A)^{-(n'-n)/d}\nonumber\\
			=&\sum_{n'\in \mathcal{N}\cap \{2,\ldots, Q\}}\sum_{n\in\mathcal{N}\cap \{1,\ldots, n'-1\}}\tilde{h}(n')\lambda(\A)^{-(n'-n)/d}\nonumber\\
			=&\sum_{n'\in \mathcal{N}\cap \{2,\ldots, Q\}}\tilde{h}(n')\sum_{n\in\mathcal{N}\cap \{1,\ldots, n'-1\}}\lambda(\A)^{-(n'-n)/d}\nonumber\\
			\ll& \sum_{n'\in \mathcal{N}\cap \{2,\ldots, Q\}}\tilde{h}(n')\nonumber\\
			\ll& \sum_{n\in \mathcal{N}\cap \{1,\ldots, Q\}}\L_{d}(Z_n).
		\end{align} In the penultimate line we used that $\lambda(\A)>1$ so $\sum_{n\in\mathcal{N}\cap \{1,\ldots, n'-1\}}\lambda(\A)^{-(n'-n)/d}=\O(1)$. In the final line we used \eqref{Zn measure}. Finally for term (C), using \eqref{Zn measure} we have
		\begin{equation}
			\label{Cbound}
			\sum_{n\in\mathcal{N}\cap\{1,\ldots,Q-1\}}\sum_{n'\in\mathcal{N}\cap\{n+1,\ldots,Q\}}h(n)h(n')\ll \left(\sum_{n\in\mathcal{N}\cap \{1,\ldots, Q\}}\L_{d}(Z_n)\right)^2.
		\end{equation}
		Using \eqref{Abound}, \eqref{Bbound}, and \eqref{Cbound}, we see that
		$$\sum_{n,m\in\mathcal{N}\cap\{1,\ldots,Q\}}\L_{d}(Z_n\cap Z_m)
		\ll  \sum_{n\in\mathcal{N}\cap \{1,\ldots,Q\}}\L_{d}(Z_n)+\left(\sum_{n\in\mathcal{N}\cap \{1,\ldots, Q\}}\L_{d}(Z_n)\right)^2.$$ Using this bound and Lemma \ref{Quasi-independence on average}, we obtain
		\begin{align*}
			\L_{d}\left(Z_{\infty}\right)&\geq \limsup_{Q\to\infty}\frac{\left(\sum_{n\in \mathcal{N}\cap \{1,\ldots,Q\}}\L_{d}(Z_n)\right)^2}{\sum_{n,n'\in\mathcal{N}\cap\{1,\ldots,Q\}}\L_{d}(Z_n\cap Z_n')}\\
			&\gg \limsup_{Q\to\infty}\frac{\left(\sum_{n\in \mathcal{N}\cap \{1,\ldots,Q\}}\L_{d}(Z_n)\right)^2}{\sum_{n\in\mathcal{N}\cap \{1,\ldots,Q\}}\L_{d}(Z_n)+\left(\sum_{n\in\mathcal{N}\cap \{1,\ldots, Q\}}\L_{d}(Z_n)\right)^2}\\
			&= 1.
		\end{align*}
		Therefore $\L_{d}(Z_{\infty})>0.$ This completes our proof.

	\end{proof}

	\subsection{Proofs of Theorem \ref{Main thm}.2 and Theorem \ref{Main thm}.3}
	
	We now bring our attention to the proofs of statements 2. and 3. from Theorem \ref{Main thm}. We begin by proving a number of technical statements. Our first step is the following variant of the well known 3r covering lemma.
	
	\begin{lemma}
		\label{5r covering lemma}
		Let $\{E_n=\prod_{i=1}^{d}[-\delta_{i,n},\delta_{i,n}]\}_{n=1}^{\infty}$ be a sequence of rectangles. Assume that $(\delta_{i,n})_{n=1}^{\infty}$ is decreasing for each $1\leq i\leq d$. Then for any sequence of points $(y_n)_{n=1}^{\infty}\subset \mathbb{R}^d$ we have that there exists $M\subset \N$ satisfying:
		\begin{enumerate}
			\item $\bigcup_{n=1}^{\infty}(y_{n}+E_n)\subset \bigcup_{m\in M}(y_m+3E_m)$
			\item $(y_m+E_m)\cap (y_{m'}+E_{m'})=\emptyset$ for distinct $m,m'\in M.$
		\end{enumerate}
	\end{lemma}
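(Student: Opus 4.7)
The plan is to run a straightforward greedy selection procedure, processing the indices $n$ in their natural order. This is the standard Vitali-type $3r$ covering argument, adapted to axis-aligned rectangles. The key structural point that makes the argument go through is the monotonicity assumption: since $(\delta_{i,n})$ is decreasing in $n$ for each coordinate $i$, we have the inclusion $E_n\subseteq E_m$ whenever $m\leq n$. Thus processing indices in increasing order means we handle the largest rectangles first, which is exactly what the $3r$ covering lemma needs.

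Concretely, I would define $M$ inductively. Set $M_0=\emptyset$. For each $n\in\N$, declare $n\in M_n$ (and otherwise $M_n=M_{n-1}$) precisely when $(y_n+E_n)\cap (y_m+E_m)=\emptyset$ for all $m\in M_{n-1}$, and let $M=\bigcup_{n\geq 1}M_n$. By construction, for any two distinct $m,m'\in M$, say $m<m'$, the index $m'$ was only added because its rectangle was disjoint from that of $m$, which yields property (2) immediately.

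For property (1), take any $n\in\N$. If $n\in M$ there is nothing to prove, since $y_n+E_n\subseteq y_n+3E_n$. Otherwise, the failure of $n$ to be admitted into $M$ produces some $m\in M$ with $m<n$ and a point $p\in (y_n+E_n)\cap(y_m+E_m)$. Write $p=y_n+u=y_m+v$ with $u\in E_n$ and $v\in E_m$. For an arbitrary $x\in y_n+E_n$, writing $x=y_n+u'$ with $u'\in E_n$ gives
\[
x-y_m=(x-p)+(p-y_m)=(u'-u)+v\in (E_n-E_n)+E_m=2E_n+E_m,
\]
where we used that $E_n$ is symmetric about the origin so that $E_n-E_n=2E_n$. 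Since $m<n$, the monotonicity of the $\delta_{i,\cdot}$ gives $E_n\subseteq E_m$, hence $2E_n+E_m\subseteq 3E_m$. Therefore $x\in y_m+3E_m$, and as $x\in y_n+E_n$ was arbitrary we conclude $y_n+E_n\subseteq y_m+3E_m$, establishing (1).

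There is no real obstacle here: the only point that has to be checked carefully is that the standard $3r$ argument, which is usually stated for balls, transfers verbatim to rectangles once one uses (a)~the symmetry $E_n-E_n=2E_n$ and (b)~the monotonicity $E_n\subseteq E_m$ for $m\leq n$ provided by the hypothesis on the side lengths. The greedy ordering by $n$ is what makes the second ingredient available.
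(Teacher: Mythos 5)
Your proposal is correct and follows essentially the same route as the paper: a greedy selection in increasing index order (which coincides with the paper's inductive choice of the smallest admissible index), with disjointness immediate and the covering property obtained from the monotonicity $E_n\subseteq E_m$ for $m\le n$ together with symmetry/convexity giving $2E_n+E_m\subseteq 3E_m$. Your explicit Minkowski-sum verification of the tripling step is just a slightly more detailed write-up of the same argument.
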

	\begin{proof}
		We define $M$ inductively. Let $M_1=\{1\}.$ Then for any $n\geq 1$ such that $(y_n+E_n)\cap (y_1+E_1)\neq \emptyset$ we have $(y_n+E_n)\subseteq (y_1+3E_1).$ This follows from the assumption that $(\delta_{i,n})$ is decreasing for each $i$. Now let $k\in\mathbb{N}$. Suppose that $M_k=\{m_i\}_{i=1}^k$ has been constructed and satisfies the following: for any $n\in \N$ such that $(y_n+E_n)\cap \bigcup_{i=1}^{k}(y_{m_i}+E_{m_i})\neq \emptyset$ we have $(y_n+E_n)\subseteq \bigcup_{i=1}^{k}(y_{m_i}+3E_{m_i}),$ and for any distinct $m,m'\in M_k$ we have $(y_m+E_m)\cap (y_{m'}+E_{m'})=\emptyset$. If for all $n\in \N$ we have $(y_n+E_n)\cap \bigcup_{i=1}^{k}(y_{m_i}+E_{m_i})\neq \emptyset$ we let $M=M_{k}$. If this is not the case we let 
		\[
		m_{k+1}:=\inf \left\{n\in \N:(y_{n}+E_{n})\cap \bigcup_{i=1}^{k}(y_{m_i}+E_{m_i})=\emptyset\right\}.
		\]
		We then define $M_{k+1}:=M_{k}\cup \{m_{k+1}\}.$ By definition, $y_{m_{k+1}}+E_{m_{k+1}}$ is disjoint from all the $y_{m_i}+E_{m_i}$ with $i\le k$. Now suppose $n\geq 1$ is such that $(y_n+E_n)\cap \bigcup_{i=1}^{k+1}(y_{m_i}+E_{m_i})\neq \emptyset.$ If $n$ satisfies $(y_n+E_n)\cap \bigcup_{i=1}^{k}(y_{m_i}+E_{m_i})\neq \emptyset$ then $(y_n+E_n)\subseteq \bigcup_{i=1}^{k}(y_{m_i}+3E_{m_i})$ by our inductive hypothesis. If $n$ satisfies $(y_n+E_n)\cap \bigcup_{i=1}^{k}(y_{m_i}+E_{m_i})= \emptyset$ and $(y_n+E_n)\cap (y_{m_{k+1}}+E_{m_{k+1}})\neq \emptyset$, then $n\geq m_{k+1}$ by the definition of $m_{k+1}.$ Now using the fact that each $(\delta_{i,n})$ is decreasing we have that $(y_n+E_n)\subset (y_{m_{k+1}}+3E_{m_{k+1}}).$ It is clear therefore that if $n$ is such that $(y_n+E_n)\cap \bigcup_{i=1}^{k+1}(y_{m_i}+E_{m_i})\neq \emptyset$ then $(y_n+E_n)\subseteq \bigcup_{i=1}^{k+1}(y_{m_i}+3E_{m_i}).$
		
		Repeating these steps we see that either this process eventually terminates and we can define $M=M_k$ for some $k\in \N$, or this process continues indefinitely and we can define $M=\cup_{k=1}^{\infty} M_k.$ In either case it is clear that $M$ satisfies both $1.$ and $2.$

	\end{proof}
	The following lemma demonstrates that under appropriate conditions, the Lebesgue measure of a shrinking target set defined by balls does not change if we multiply the radii of these balls by an arbitrarily small positive constant.
	
	\begin{lemma}
		\label{arbitrary constant lemma}
		Let $\S=\{S_{i}\}_{i\in \I}$ be an IFS, $(x_n)\in X^{\N}$ and $h:\N\to [0,\infty)$ be decreasing. Assume that there exists a positive diagonal matrix $A$ such that the matrix parts of each $S_i$ equals $A$. Then 
		$$\L_{d}\left(\bigcap_{0<c\leq 1}W(\mathcal{S},(x_n)_{n=1}^{\infty},c\cdot h)\right)=\L_{d}(W(\mathcal{S},(x_n)_{n=1}^{\infty},h)).$$
	\end{lemma}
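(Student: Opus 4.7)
The inclusion $\bigcap_{0<c\le 1}W(\mathcal{S},(x_n)_{n=1}^{\infty},c\cdot h)\subseteq W(\mathcal{S},(x_n)_{n=1}^{\infty},h)$ is immediate and gives one inequality for free. The plan is to establish the reverse by proving, for each fixed $c\in(0,1]$, that $\L_d(W_1\setminus W_c)=0$, where I abbreviate $W_c:=W(\mathcal{S},(x_n)_{n=1}^{\infty},c\cdot h)$ and use that $\bigcap_{0<c\le 1}W(\mathcal{S},(x_n),ch)=\bigcap_{n\in \N}W(\mathcal{S},(x_n),h/n)$ is a countable intersection. Writing $U_n=\bigcup_{\i\in\I^n}S_{\i}(B(x_n,r_n))$ with $r_n=(h(n)/\lambda(\A)^n)^{1/d}$ and similarly $U_n^{(c)}$, we have $W_c=\bigcap_M W_c^{\ge M}$ where $W_c^{\ge M}:=\bigcup_{n\ge M}U_n^{(c)}$. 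Since every $S_i$ has the same positive diagonal matrix part $A$, each $S_\i(B(x_n,r_n))$ is a translate of the single axis-aligned ellipsoid $E_n:=A^n B(0,r_n)$, whose axis-lengths $\lambda_i^n r_n$ are decreasing in $n$ (using that $h$ is decreasing, $\lambda_i<1$, and $\lambda(\A)>1$).

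The central idea is to combine Theorem \ref{Lebesgue density theorem} with Lemma \ref{5r covering lemma}. I will take $y$ to be a density point of $W_1$ and, for a small $\rho>0$ with $\L_d(B(y,\rho)\cap W_1)\ge (1-\epsilon)\L_d(B(y,\rho))$ and for any $M$ with $\text{diam}(E_M)\le \rho$, apply the 5r-lemma to the family $\{S_\i(x_n)+E_n:n\ge M,\,(S_\i(x_n)+E_n)\cap B(y,\rho)\neq\emptyset\}$. Formally the lemma is stated for axis-aligned rectangles, but since $A$ is diagonal the ellipsoids are sandwiched between inscribed and circumscribed axis-aligned rectangles with $d$-bounded distortion; passing through the circumscribed rectangles, which have widths monotone in $n$, produces a pairwise disjoint subfamily $\{F_k=S_{\i_k}(x_{n_k})+E_{n_k}\}_k$ whose enlargements cover the union. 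The density of $W_1$ at $y$ then yields $\sum_k \L_d(F_k)\ge C_d^{-1}(1-\epsilon)\L_d(B(y,\rho))$ for a dimensional constant $C_d$. Because the shrunken ellipsoids $S_{\i_k}(x_{n_k})+c^{1/d}E_{n_k}\subseteq U_{n_k}^{(c)}\subseteq W_c^{\ge M}$ remain pairwise disjoint and lie within $B(y,2\rho)$, this will give
\[
\L_d\bigl(W_c^{\ge M}\cap B(y,2\rho)\bigr)\ge c\sum_k\L_d(F_k)\ge \gamma\,\L_d(B(y,2\rho))
\]
for some $\gamma=\gamma(c,d)>0$ independent of $M$.

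Holding $\rho$ fixed and letting $M\to\infty$, the left-hand side decreases to $\L_d(W_c\cap B(y,2\rho))$ by continuity from above, so the same inequality holds with $W_c$ in place of $W_c^{\ge M}$; letting $\rho\to 0$ then shows the lower density of $W_c$ at $y$ is at least $\gamma$. To finish, I will apply Theorem \ref{Lebesgue density theorem} a second time, now to $W_c$ itself: the density of $W_c$ at almost every point is $0$ or $1$, so the positive lower density at our density point $y$ of $W_1$ forces $y\in W_c$ outside a Lebesgue-null exceptional set. This will give $\L_d(W_1\setminus W_c)=0$, completing the proof. The main obstacle is the adaptation of Lemma \ref{5r covering lemma} from rectangles to ellipsoids, which I handle by passing through circumscribing axis-aligned rectangles and absorbing the dimensional distortion into $C_d$ and $\gamma$; a secondary subtlety is the need to apply the Lebesgue density theorem twice (once for $W_1$ to locate $y$, once for $W_c$ to convert positive lower density into actual membership), exploiting the $\{0,1\}$-dichotomy of the density function.
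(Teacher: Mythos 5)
Your proposal is correct and follows essentially the same route as the paper's proof: cover $W(\mathcal{S},(x_n),h)$ near a Lebesgue density point by the translated ellipsoids, pass to circumscribed axis-aligned rectangles with side lengths monotone in $n$ so that Lemma \ref{5r covering lemma} applies, extract a disjoint subfamily carrying a fixed proportion of the measure, and note that the $c$-shrunk targets occupy a definite fraction of each such piece, concluding via Theorem \ref{Lebesgue density theorem}. The only differences are organizational (you argue directly with a lower-density bound for $W_c$ and the $0$--$1$ density dichotomy, letting $M\to\infty$ by continuity from above, whereas the paper argues by contradiction that the exceptional set has no density points), which does not change the substance of the argument.
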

	\begin{proof}
		We may assume that $\L_{d}(W(\mathcal{S},(x_n)_{n=1}^{\infty},h))>0$. Otherwise our result follows trivially. For the purposes of obtaining a contradiction, suppose our desired equality does not hold. In that case there exists $0<c\leq 1$ such that $$\L_{d}(W(\mathcal{S},(x_n)_{n=1}^{\infty},h)\setminus W(\mathcal{S},(x_n)_{n=1}^{\infty},c\cdot h))>0.$$ This in turn implies that there exists $N\in\mathbb{N}$ such that 
		\[
		\L_{d}\left(W(\mathcal{S},(x_n)_{n=1}^{\infty},h)\cap \left\{x: x\notin S_{\i}\left(B\left(x_{|\i|},\left(\frac{c \cdot h(|\i|)}{\lambda(\A)^{|\i|}}\right)^{1/d}\right)\right)\,\forall \i\in \bigcup_{m\geq N}\I^m\right\}\right)>0.
		\]
		Our goal now is to contradict this inequality by showing that $$W(\mathcal{S},(x_n)_{n=1}^{\infty},h)\cap \left\{x: x\notin S_{\i}\left(B\left(x_{|\i|},\left(\frac{c \cdot h(|\i|)}{\lambda(\A)^{|\i|}}\right)^{1/d}\right)\right)\,\forall \i\in \bigcup_{m\geq N}\I^m\right\}$$ has no density points. It will then follow from Theorem \ref{Lebesgue density theorem} that this set has zero Lebesgue measure, and as such we will have the desired contradiction. Suppose that $z$ is a density point for this set. Note that $z$ is then automatically a density point for $W(\mathcal{S},(x_n)_{n=1}^{\infty},h)$.  As such there exists $r'>0$ such that \begin{equation}
			\label{halfdensity}\L_d(W(\mathcal{S},(x_n)_{n=1}^{\infty},h)\cap B(z,r))>\L_d(B(z,r))/2
		\end{equation} for all $0<r<r'$. 
		
		Now, for each $\i \in \I^*$, denote 
		\[
		Q(\i)=\prod_{i=1}^{d}\left[-\left(\frac{ h(|\i|)}{\lambda(\A)^{|\i|}}\right)^{1/d},\left(\frac{ h(|\i|)}{\lambda(\A)^{|\i|}}\right)^{1/d}\right].
		\]
		Let $r<r'$ be arbitrary and let $N'> N$ be sufficiently large that $$Diam\left(S_{\i}\left(x_{|\i|}+Q(\i)\right)\right)<r\textrm{ for all }\i \in \bigcup_{m=N'}^{\infty}\I^m.$$
		Let $$\Omega:=\left\{\i\in \bigcup_{m=N'}^\infty\I^m:S_{\i}\left(x_{|\i|}+Q(\i)\right)\cap B(z,r)\neq\emptyset\right\}.$$ By the definition of $\Omega$ and $N'$ we have  
		\begin{equation}
			\label{inclusions}
			W(\mathcal{S},(x_n)_{n=1}^{\infty},h)\cap B(z,r)\subseteq  \bigcup_{\i\in \Omega}S_{\i}\left(x_{|\i|}+Q(\i)\right)\subseteq B(z,2r).
		\end{equation}
		Now it follows from \eqref{halfdensity} and the first inclusion in \eqref{inclusions} that \begin{equation}
			\label{covering measure}
			\L_d\left(\bigcup_{\i\in \Omega}S_{\i}\left(x_{|\i|}+Q(\i)\right)\right)\geq \L_d(B(z,r))/2.
		\end{equation}
		At this point we want to apply Lemma \ref{5r covering lemma}. However, before we can apply this lemma we need to check that we satisfy the relevant hypothesis. Because each of the contractions $S_i$ share the same matrix part, and this matrix is a positive diagonal matrix, there exists $\lambda_1,\ldots,\lambda_d$ such that for each $\i \in \I^{*}$ we have $$S_{\i}\left(x_{|\i|}+Q(\i)\right)=S_{\i}(x_{|\i|})+\prod_{i=1}^{d}\left[-\lambda_i^{|\i|}\left(\frac{ h(|\i|)}{\lambda(\A)^{|\i|}}\right)^{1/d},\lambda_{i}^{|\i|}\left(\frac{ h(|\i|)}{\lambda(\A)^{|\i|}}\right)^{1/d}\right].$$ Now using the fact that $h$ is decreasing, we see that the set of rectangles $$\left\{\prod_{i=1}^{d}\left[-\lambda_i^{|\i|}\left(\frac{ h(|\i|)}{\lambda(\A)^{|\i|}}\right)^{1/d},\lambda_{i}^{|\i|}\left(\frac{ h(|\i|)}{\lambda(\A)^{|\i|}}\right)^{1/d}\right]\right\}_{\i\in \Omega}$$ can be enumerated so that the hypothesis of Lemma \ref{5r covering lemma} is satisfied; that is, the side lengths of the rectangles are all simultaneously decreasing. As such we can assert that there exists $\Omega'\subset\Omega,$ such that for $\i,\i'\in \Omega$ satisfying $\i\neq \i'$ we have 
		\begin{equation}
			\label{property1}S_{\i}\left(x_{|\i|}+Q(\i)\right)\cap S_{\i'}\left(x_{|\i'|}+Q(\i')\right)=\emptyset
		\end{equation} and
		\begin{equation}
			\label{property2}
			\bigcup_{\i\in \Omega}S_{\i}\left(x_{|\i|}+Q(\i)\right)
			\subseteq \bigcup_{\i\in \Omega'} S_{\i}\left(x_{|\i|}+3\cdot Q(\i)\right).
		\end{equation}
		Using \eqref{covering measure}, \eqref{property1} and \eqref{property2} we observe
		\begin{align}
			\label{final part}
			\L&\left(\bigcup_{\i\in \Omega'}S_{\i}\left(x_{|\i|}+Q(\i)\right)\right)
			=\sum_{\i\in \Omega'}\L_{d}\left(S_{\i}\left(x_{|\i|}+Q(\i)\right)\right)
			=\frac{1}{3^d}\sum_{\i\in \Omega'}\L_{d}\left(S_{\i}\left(x_{|\i|}+3\cdot Q(\i)\right)\right)\nonumber\\
			\geq&\frac{1}{3^d}\L_{d}\left(\bigcup_{i\in \Omega'} S_{\i}\left(x_{|\i|}+3\cdot Q(\i)\right)\right)
			\geq \frac{1}{3^d}\L_{d}\left(\bigcup_{i\in \Omega} S_{\i}\left(x_{|\i|}+ Q(\i)\right)\right)\nonumber\\
			\geq& \frac{\L_{d}(B(z,r))}{2\cdot 3^d}
		\end{align}
		For any $\i\in \Omega'$ we have 
		$$S_{\i}\left(B\left(x_{|\i|},\left(\frac{ c\cdot h(|\i|)}{\lambda(\A)^{|\i|}}\right)^{1/d}\right)\right)\subseteq S_{\i}\left(x_{|\i|}+Q(\i)\right)$$ and $$\L_{d}\left(S_{\i}\left(B\left(x_{|\i|},\left(\frac{ c\cdot h(|\i|)}{\lambda(\A)^{|\i|}}\right)^{1/d}\right)\right)\right)\geq \frac{c}{d^{d/2}} \L_{d}\left(S_{\i}\left(x_{|\i|}+Q(\i)\right)\right).$$
		Now using \eqref{property1}, \eqref{final part} and the above we have 
		\begin{align}
			\label{measure estimate}
			\L_{d}&\left(\bigcup_{\i\in \Omega'}S_{\i}\left(B\left(x_{|\i|},\left(\frac{ c\cdot h(|\i|)}{\lambda(\A)^{|\i|}}\right)^{1/d}\right)\right)\right)\nonumber 
			=\sum_{\i\in \Omega'}\L_d\left(S_{\i}\left(B\left(x_{|\i|},\left(\frac{ c\cdot h(|\i|)}{\lambda(\A)^{|\i|}}\right)^{1/d}\right)\right)\right)\nonumber\\
			\geq& \frac{c}{d^{d/2}}\sum_{\i\in \Omega'} \L_{d}\left(S_{\i}\left(x_{|\i|}+Q(\i)\right)\right)\nonumber
			\geq  \frac{c}{d^{d/2}}	\L\left(\bigcup_{\i\in \Omega'}S_{\i}\left(x_{|\i|}+Q(\i)\right)\right)\nonumber\\
			\geq&   \frac{c}{d^{d/2}} \cdot \frac{\L_{d}(B(z,r))}{2\cdot 3^d}.
		\end{align} Now using \eqref{inclusions} and \eqref{measure estimate}, and remembering that $N'>N$, we have 
		\begin{align*}
			&\L_d\left(W(\mathcal{S},(x_n)_{n=1}^{\infty},h)\cap \left\{x: x\notin S_{\i}\left(B\left(x_{|\i|},\left(\frac{c \cdot h(|\i|)}{\lambda(\A)^{|\i|}}\right)^{1/d}\right)\right)\,\forall \i\in \bigcup_{m\geq N}\I^m\right\}\cap B(z,2r)\right)\\
			\leq &\L_d\left( \left\{x: x\notin S_{\i}\left(B\left(x_{|\i|},\left(\frac{c \cdot h(|\i|)}{\lambda(\A)^{|\i|}}\right)^{1/d}\right)\right)\,\forall \i\in \cup_{m\geq N}\I^m\right\}\cap B(z,2r)\right)\\
			\leq &\L_d(B(z,2r))-\L_{d}\left(\bigcup_{i\in \Omega'}S_{\i}\left(B\left(x_{|\i|},\left(\frac{ c\cdot h(|\i|)}{\lambda(\A)^{|\i|}}\right)^{1/d}\right)\right)\right)\\
			\leq & \L(B(z,2r))- \frac{c}{d^{d/2}} \cdot \frac{\L_{d}(B(z,r))}{2\cdot 3^d}\\
			\leq & \L_d(B(z,2r))\left(1-\frac{c}{2\cdot 3^d\cdot 2^d\cdot d^{d/2}}\right).
		\end{align*}Since $r<r'$ was arbitrary, this means that $z$ is not a density point for $$W(\mathcal{S},(x_n)_{n=1}^{\infty},h)\cap \left\{x: x\notin S_{\i}\left(B\left(x_{|\i|},\left(\frac{c \cdot h(|\i|)}{\lambda(\A)^{|\i|}}\right)^{1/d}\right)\right)\,\forall \i\in \bigcup_{m\geq N}\I^m\right\}.$$ Hence we have obtained the desired contradiction. This completes our proof. 
	\end{proof}
	The following lemma shows that under appropriate conditions, if a subset of a self-affine set has positive measure, then almost every element of the self-affine set is contained in some image of this set.
	\begin{lemma}
		\label{Full measure images}
		Let $\S=\{S_i\}_{i\in \I}$ be an IFS with self-affine set $X$. Assume that $\L_{d}(X)>0$ and that $X$ is differentiation regular. Then for any Borel set $W\subset X$ satisfying $\L_{d}(W)>0$ we have
		$$\L_{d}\left(\bigcup_{\i\in \I^*}S_{\i}(W)\right)=\L_{d}(X).$$
	\end{lemma}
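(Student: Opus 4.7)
The plan is to argue by contradiction using the differentiation regular hypothesis and the affine invariance of the ratio $\L_d(S_\i(W))/\L_d(S_\i(X))$. Set $W':=\bigcup_{\i\in\I^*}S_\i(W)$. Since each $S_\i(X)\subset X$, we have $W'\subset X$, and $W'$ is Borel as a countable union. Suppose for contradiction that $\L_d(X\setminus W')>0$.

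Let $\mathcal V$ and $\eta>0$ be the density basis and constant provided by differentiation regularity, and apply the density basis property to the Borel set $A=X\setminus W'\subset X$: for Lebesgue almost every $x\in\R^d$, $\L_d(V\cap (X\setminus W'))/\L_d(V)\to \chi_{X\setminus W'}(x)$ as $V\to x$ in $\mathcal V$. Since $\L_d(X\setminus W')>0$, we can pick $x\in X\setminus W'$ at which this density equals $1$. Now invoke the second part of differentiation regularity at this $x\in X$ to produce a sequence $V_j(x)\in\mathcal V$ with $V_j(x)\to x$, together with words $\i_j\in\I^*$ such that $S_{\i_j}(X)\subset V_j(x)$ and $\L_d(S_{\i_j}(X))\geq \eta\,\L_d(V_j(x))$.

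The key observation is that, because each $S_\i$ is affine, $\L_d(S_{\i_j}(W))/\L_d(S_{\i_j}(X))=\L_d(W)/\L_d(X)$, so
\[
\L_d(W'\cap V_j(x))\;\geq\;\L_d(S_{\i_j}(W))\;=\;\frac{\L_d(W)}{\L_d(X)}\,\L_d(S_{\i_j}(X))\;\geq\;\eta\,\frac{\L_d(W)}{\L_d(X)}\,\L_d(V_j(x)).
\]
Writing $\kappa:=\eta\,\L_d(W)/\L_d(X)>0$, we obtain $\L_d((X\setminus W')\cap V_j(x))/\L_d(V_j(x))\leq 1-\kappa<1$ for every $j$, contradicting the fact that this ratio tends to $1$. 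Hence $\L_d(X\setminus W')=0$ and the lemma follows.

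There is no serious obstacle: the only point that requires a little care is that the density basis conclusion is phrased for a.e.\ $x\in\R^d$ rather than a.e.\ $x\in X$, so one must check that the subset of $X\setminus W'$ where the density limit equals $1$ still has positive Lebesgue measure; this is immediate because the density statement fails only on a null set in $\R^d$. Once a good density point $x$ is chosen, the affine invariance of the measure ratio instantly converts the cylinder $S_{\i_j}(X)\subset V_j(x)$ into a definite amount of $W'$-mass inside $V_j(x)$, and this uniform lower bound contradicts density $1$ for the complement.
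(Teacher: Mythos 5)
Your proof is correct and is essentially the same argument as the paper's: both use the differentiation regularity of $X$ to place a copy $S_{\i}(W)$ of definite relative measure $\eta\,\L_d(W)/\L_d(X)$ inside arbitrarily small members $V_j(x)$ of the density basis, and then invoke the density basis property. The only cosmetic difference is that the paper concludes directly that the basis-density of $\bigcup_{\i}S_{\i}(W)$ is bounded below at every $x\in X$ (so almost every point of $X$ lies in the union), whereas you phrase it contrapositively via a density point of the complement.
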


	\begin{proof}
		Let $\mathcal{V}$ and $\eta$ be as in the definition of differentiation regular in Section \ref{sec:statements}. Now let $x\in X$ be arbitrary. Let $\{V_{j}(x)\}$ be a sequence in $\mathcal{V}$ whose existence is guaranteed by the definition of differentiation regular. Let us fix a $V_{j}(x)$ and let $\i'$ be the corresponding word in the definition of differentiation regular satisfying $$S_{\i'}(X)\subset V_{j}(x)\textrm{ and }\L_{d}(S_{\i'}(X))\geq \eta \L_{d}(V_{j}(x)).$$ Then we have the following:
		\begin{align*}
			\L\left( V_{j}(x)\cap \bigcup_{\i\in \I^*}S_{\i}(W) \right)\geq \L\left(S_{\i'}(W)\right)&=Det(A_{\i'})\cdot \L\left(W\right)\\
			&=\frac{Det(A_{\i'})\L(X)\cdot \L\left(W\right)}{\L(X)}\\
			&=\frac{\L(S_{\i'}(X))\cdot \L\left(W\right)}{\L(X)}\\
			&=\frac{\eta\L(V_{j}(x))\cdot \L\left(W\right)}{\L(X)}.\\
		\end{align*}
		It follows therefore that for any $x\in X$ we have 
		$$\lim_{V\to x,x\in V\in\mathcal{V}}\frac{\L(V\cap \bigcup_{\i\in \I^*}S_{\i}\left(W\right))}{\L(V)}    \geq \frac{\eta\L\left(W\right)}{\L(X)}>0.$$ Therefore, by the definition of a density basis, it follows that Lebesgue almost every $x\in X$ is contained in $\bigcup_{\i\in \I^*}S_{\i}\left(W\right).$ This completes our proof.
		
	\end{proof}
	
	We are now in a position to prove Theorem \ref{Main thm}.2. 
	
	\begin{proof}[Proof of Theorem \ref{Main thm}.2]
		Let $\tt$ belong to the full measure set for which the conclusion of Theorem \ref{Main thm}.1 is true. We will now show that this $\tt$ also satisfies the conclusion of Theorem \ref{Main thm}.2, and in doing so complete our proof. Let us now fix $x\in X_{\tt}$ and $h\in H$ that is decaying regularly and decreasing. Because of our assumptions on $\tt,$ we know that $\L_{d}(W(S_{\tt},x,h))>0$.	Since $A$ is a diagonal matrix we know that $X$ is differentiation regular by Lemma \ref{Shmerkin lemma}. Therefore, by Lemma \ref{arbitrary constant lemma}, and Lemma \ref{Full measure images}, we have that Lebesgue almost every element of $ X_{\tt}$ belongs to 
		$$B:=\bigcup_{i\in \I^*}S_{\i,\tt}\left(\bigcap_{0<c\leq 1}W(S_{\tt},x,c\cdot h)\right).$$ Therefore to prove the result it suffices to show that if $y\in B$ then $y\in W(S_{\tt},x,h).$ With this goal in mind, we fix $y\in B$ arbitrarily. By definition, there exists $\j\in \I^{*}$ and $z\in \bigcap_{0<c\leq 1}W(S_{\tt},x,c\cdot h)$ such that $y=S_{\j,\tt}(z).$ Now let $\gamma>0$ be such that $$\frac{h(n+1)}{h(n)}\geq \gamma $$ for all $n\in \mathbb{N}.$ Such a $\gamma$ exists because of our assumption that $h$ is decaying regularly. We let 
		$$c=\frac{\gamma^{|\j|}}{\lambda(\A)^{|\j|}}.$$ Suppose $\k\in \I^*$ is such that
		\begin{equation}
			\label{z hit target}
			T_{\k,\tt}(z)\in B\left(x,\left( \frac{c\cdot h(|\k|)}{\lambda(\A)^{|\k|}}\right)^{1/d}\right).
		\end{equation}
		Then by the definition of $c,$ we have
		\begin{equation}
			\label{y hit target}
			(T_{\k,\tt}\circ T_{\overline{\j},\tt})(y)\in B\left(x,\left(\frac{c\cdot h(|\k|)}{\lambda(\A)^{|\k|}}\right)^{1/d}\right)\subseteq B\left(x,\left(\frac{h(|\k|+|\overline{\j}|)}{\lambda(\A)^{|\k|+|\overline{\j}|}}\right)^{1/d}\right).
		\end{equation}
		Since $z\in \bigcap_{0<c\leq 1}W(S_{\tt},x,c\cdot h)$ there are infinitely many $\k$ such that \eqref{z hit target} is satisfied. It follows that there are infinitely many $\k$ such that \eqref{y hit target} is satisfied. Therefore $y\in W(\mathcal{S}_{\tt},x,h).$ This completes the proof.
		
	\end{proof}
	
	The proof of Theorem \ref{Main thm}.3 is similar to the proof of Theorem \ref{Main thm}.2. We include the details for completion.
	
	\begin{proof}[Proof of Theorem \ref{Main thm}.3]
		Let $U\subset \mathbb{R}^{\#\I\cdot d}$ and $\i\in\I^{\N}$ be as in the statement of Theorem \ref{Main thm}.3. Duplicating the arguments given in the proof of Theorem \ref{Main thm}.2, we can show that for Lebesgue almost every $\tt\in U$, for any $h\in H$ that is decaying regularly and decreasing, we have that Lebesgue almost every $x\in X_{\tt}$ is contained in $$B:=\bigcup_{i\in \I^*}S_{\i,\tt}\left(\bigcap_{0<c\leq 1} W(\mathcal{S}_{\tt},\pi_{\tt}(\i),c\cdot h)\right).$$
		Now let us fix a $\tt\in U$ belonging to the full measure set for which this conclusion is true. By our underlying assumptions, we may also assume that this $\tt$ satisfies $\pi_\tt(\i)\in int(X_\tt).$ We now show that this $\tt$ also satisfies the conclusions of Theorem \ref{Main thm}.3.
		
		Let us now fix $h\in H$ that is decaying regularly and decreasing. Since each element of $S_{\tt}$ maps sets of Lebesgue measure zero to sets of Lebesgue measure zero, our fixed parameter $\tt$ also satisfies
		\begin{equation}
			\label{no preimage}
			\L_{d}\left(B \setminus  \bigcup_{\i\in \I^{*}}S_{\i,\tt}(X_{\tt}\setminus B)\right)=\L_{d}(X_{\tt}).
		\end{equation}
		Now using the fact $\pi_\tt(\i)\in int(X_\tt),$ we see that we can replace $h$ with a sufficiently small bounded function if necessary, so that without loss of generality for all $n\in \N$ we have 
		\begin{equation}
			\label{Ball inclusion}
			B\left(\pi_{\tt}(\i),\left(\frac{h(n)}{\lambda(\A)^{n}}\right)^{1/d}\right)\in int(X_{\tt}).
		\end{equation} We will now show that any element of $$C:=B \setminus  \bigcup_{\i\in \I^{*}}S_{\i,\tt}(X_{\tt}\setminus B)$$ is contained in $$\left\{x: \exists \j\in\I^{\N} \textrm{ such that }(T_{j_{n},\tt}\circ \cdots \circ T_{j_1,\tt})(x)\in B\left(\pi_{\tt}(\i),\left(\frac{h(n)}{ \lambda(\A)^{n}}\right)^{1/d}\right)\textrm{ for i.m. } n\in \N\right\}.$$ Which by \eqref{no preimage} will complete our proof. 
		
		Let us fix $x\in C$. If $x\in C$ then $x\in B$, therefore we can use the argument given in the proof of Theorem \ref{Main thm}.2 to show that there exists $\i_1\in \I^{*}$ such that $$T_{\i_1,\tt}(x)\in B\left(\pi_{\tt}(\i),\left(\frac{h(|\i_1|)}{\lambda(\A)^{|\i_1|}}\right)^{1/d}\right).$$By \eqref{Ball inclusion} we know that $T_{\i_1,\tt}(x)\in int(X_{\tt}).$ Combining this with the fact $x\in C$ and therefore not in $T_{\i_{1},\tt}^{-1}(X_{\tt}\setminus B)$, it follows that $T_{\i_1,\tt}(x)\in B$. Therefore there exists $\j_1$ and $y$ such that $$S_{\j_1,\tt}(y)=T_{\i_1,\tt}(x)\textrm{ and }y\in \bigcap_{0<c\leq 1}W(\mathcal{S}_{\tt},\pi_{\tt}(\i),c\cdot h).$$ Now let $\gamma>0$ be such that $$\frac{h(n+1)}{h(n)}\geq \gamma$$ for all $n\in \N;$ just as in the proof of Theorem \ref{Main thm}.2. If we let $$c=\frac{\gamma^{|\i_1|+|\j_1|}}{{\lambda(\A)^{|\i_1|+|\j_1|}}},$$ then it follows from the argument given in the proof of Theorem \ref{Main thm}.2 that if $$T_{\mathbf{k},\tt}(y)\in  B\left(\pi_{\tt}(\i),\left(\frac{c\cdot h(|\mathbf{k})|}{\lambda(\A)^{|\mathbf{k}|}}\right)^{1/d}\right)$$ then $$T_{\mathbf{k}\overline{\j_1} \i_1,\tt}(x)\in B\left(\pi_{\tt}(\i),\left(\frac{h(|\mathbf{k}\j_1 \i_1|)}{\lambda(\A)^{|\mathbf{k}\j_1 \i_1|}}\right)^{1/d}\right).$$
		We let $\i_2=\mathbf{k}\overline{\j_1}.$ Using \eqref{Ball inclusion} and the fact $x\in C$, we may conclude that $T_{\i_2 \i_1,\tt}(x)\in B.$ As such we can repeat the above argument to assert that there exists a word $\i_{3}\in \I^*$ such that $$T_{\i_3\i_2 \i_1,\tt}(x)\in B\left(\pi_{\tt}(\i),\left(\frac{h(|\i_3\i_2 \i_1|)}{\lambda(\A)^{|\i_3\i_2 \i_1|}}\right)^{1/d}\right)\textrm{ and }T_{\i_3\i_2 \i_1,\tt}(x)\in B.$$ It is clear that this process can be continued indefinitely, and as such we can define a sequence of words $(\i_{p})_{p=1}^{\N}$ such that for all $p\in \mathbb{N}$ we have $$T_{\i_p\ldots \i_1,\tt}(x)\in B\left(\pi_{\tt}(\i),\left(\frac{h(|\i_p\ldots \i_1|)}{\lambda(\A)^{|\i_p\ldots \i_1|}}\right)^{1/d}\right)\textrm{ and }T_{\i_p\ldots \i_1,\tt}(x)\in B.$$ Our results now follows by taking our desired sequence to be the infinite concatenation of the words $\{\i_p\}_{p=1}^{\infty}$, i.e. $\j=\i_1\i_2\i_3\ldots$. .
	\end{proof}
	
	\subsection{Proof of Theorem \ref{Main thm}.4}
	
	The proof of Theorem \ref{Main thm}.4 is similar to the proof of Theorem \ref{Main thm}.1. For this reason we only include an outline. The key technical result which allows us to recast our recurrence set statements into a framework that resembles that used to study shrinking targets sets is the following. 
	
	\begin{lemma}
		\label{recurrence to shrinking}
		Let $\S=\{S_i\}_{i\in \I}$ be an IFS and $\i\in \I^{*}.$ Then for any Borel set $E\subset\mathbb{R}^d$ we have $x\in \pi(\overline{\i}^{\infty})+\sum_{k=1}^{\infty}A_{\overline{\i}}^k(E)$ if and only if  $T_{\i}(x)\in x+E$. Moreover, if we assume that $\|A_i\|<1/2$ for all $i\in \I,$ then there exists $c,C>0$ depending only on $\max_{i\in \I}\|A_i\|$ such that 
		\begin{equation}\label{eq:detsum}
			\frac{|Det(A_{\overline{\i}})|}{C}\leq \left|Det\left(\sum_{k=1}^{\infty}A_{\overline{\i}}^k\right)\right|\leq C\left|Det(A_{\overline{\i}})\right|
		\end{equation} 
		and 
		\begin{equation}\label{eq:contained}
			A_{\overline{\i}}(B(0,cr))\subseteq \sum_{k=1}^{\infty}A_{\overline{\i}}^k(B(0,r))
		\end{equation}
		for all $\i\in \I^*$ and $r>0$.
	\end{lemma}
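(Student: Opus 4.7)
The plan is to reduce everything to the fact that $T_{\i}$ is an affine contraction whose fixed point is precisely $\pi(\overline{\i}^{\infty})$, and then to exploit the Neumann series for $(I-A_{\overline{\i}})^{-1}$. First I would recall that $T_{\i}=S_{\overline{\i}}^{-1}$, so the unique fixed point of $T_{\i}$ coincides with the unique fixed point of $S_{\overline{\i}}$, which by definition of the projection map is $p:=\pi(\overline{\i}^{\infty})$. Writing $T_{\i}(x)=A_{\overline{\i}}^{-1}(x-p)+p$, the condition $T_{\i}(x)\in x+E$ becomes $(A_{\overline{\i}}^{-1}-I)(x-p)\in E$, equivalently $x-p\in (A_{\overline{\i}}^{-1}-I)^{-1}(E)$. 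The identity
\[
(A_{\overline{\i}}^{-1}-I)^{-1}=A_{\overline{\i}}(I-A_{\overline{\i}})^{-1}=\sum_{k=1}^{\infty}A_{\overline{\i}}^{k},
\]
valid because $\|A_{\overline{\i}}\|<1$ makes the Neumann series converge, then yields the first part.

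For the determinant estimate, I would factor $\sum_{k=1}^{\infty}A_{\overline{\i}}^{k}=A_{\overline{\i}}(I-A_{\overline{\i}})^{-1}$, so that
\[
\left|Det\left(\sum_{k=1}^{\infty}A_{\overline{\i}}^{k}\right)\right|=\frac{|Det(A_{\overline{\i}})|}{|Det(I-A_{\overline{\i}})|}.
\]
Under $\|A_{i}\|<1/2$ for every $i$, one has $\|A_{\overline{\i}}\|<1/2$ (at least for $|\i|\ge 1$, which is the case since $\i\in\I^{*}$), so every eigenvalue $\lambda$ of $A_{\overline{\i}}$ satisfies $|\lambda|<1/2$ and hence $|1-\lambda|\in(1/2,3/2)$. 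Taking products of eigenvalues gives $|Det(I-A_{\overline{\i}})|\in[2^{-d},(3/2)^{d}]$, which immediately delivers the required comparability with constant $C=2^{d}$ depending only on the dimension (and hence only on $\max_{i}\|A_{i}\|$ through the bound $1/2$).

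Finally, for the ball inclusion, I would use the commutation $A_{\overline{\i}}(I-A_{\overline{\i}})^{-1}=(I-A_{\overline{\i}})^{-1}A_{\overline{\i}}$. A point $y=A_{\overline{\i}}v$ with $v\in B(0,cr)$ lies in $M(B(0,r))$, where $M=\sum_{k\geq 1}A_{\overline{\i}}^{k}$, if and only if $M^{-1}y=(I-A_{\overline{\i}})v$ lies in $B(0,r)$. Since $\|I-A_{\overline{\i}}\|\leq 1+\|A_{\overline{\i}}\|<3/2$, taking $c=2/3$ gives $\|(I-A_{\overline{\i}})v\|<r$, which proves the inclusion.

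None of the steps is a serious obstacle; the only place requiring care is making sure that the spectral information $\|A_{\overline{\i}}\|<1/2$ is strong enough to bound $|Det(I-A_{\overline{\i}})|$ \emph{uniformly} in $\i$ (both above and away from zero), so that the constant $C$ in \eqref{eq:detsum} genuinely depends only on $\max_{i\in\I}\|A_{i}\|$ and on $d$, and not on the length of $\i$. This is precisely why the $1/2$ threshold, rather than just $\|A_{i}\|<1$, enters the hypothesis.
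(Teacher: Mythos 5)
Your proof is correct, and it reaches the same two core identities as the paper — the Neumann-series identity $(A_{\overline{\i}}^{-1}-I)^{-1}=\sum_{k\ge 1}A_{\overline{\i}}^{k}$ for the equivalence, and uniform control of the "correction factor" for the determinant and ball statements — but the details are packaged differently. For the first part, the paper expands $T_{\i}(x)$ explicitly in terms of the translation vectors and then recognises the resulting series as $\pi(\overline{\i}^{\infty})$, whereas you shortcut this by observing that $T_{\i}=S_{\overline{\i}}^{-1}$ has fixed point $\pi(\overline{\i}^{\infty})$ and writing $T_{\i}$ in fixed-point form; this is the same computation in cleaner clothing. For the second part, the paper bounds $\bigl\|\sum_{k\ge1}A_{\overline{\i}}^{k-1}x\bigr\|$ above and below by $(1\pm\tfrac{\lambda}{1-\lambda})\|x\|$ with $\lambda=\max_i\|A_i\|$, deduces eigenvalue bounds for the series operator, and then multiplies by $|Det(A_{\overline{\i}})|$; you instead use the closed form $\sum_{k\ge1}A_{\overline{\i}}^{k}=A_{\overline{\i}}(I-A_{\overline{\i}})^{-1}$, so that the determinant becomes $|Det(A_{\overline{\i}})|/|Det(I-A_{\overline{\i}})|$ and the inclusion reduces to $\|I-A_{\overline{\i}}\|<3/2$, with $|Det(I-A_{\overline{\i}})|$ pinched between $(1-\lambda)^{d}$ and $(1+\lambda)^{d}$ via the eigenvalues of $A_{\overline{\i}}$. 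Your route is slightly slicker and, notably, does not actually need the threshold $1/2$: it works verbatim under any uniform bound $\max_i\|A_i\|\le\rho<1$, with constants depending on $\rho$ and $d$. So your closing remark that the $1/2$ hypothesis is forced by this estimate is not quite right for your own argument — it is the paper's cruder bound $c=1-\tfrac{\lambda}{1-\lambda}$ that requires $\lambda<1/2$ here, and the real reason $1/2$ appears in the paper's hypotheses is the Solomyak transversality estimate (Lemma \ref{Transverality estimate}), not this lemma. This is a cosmetic point only; the proof itself is complete.
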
 	
	\begin{proof}
		Let $\i=(i_1,\ldots,i_n)$. We begin by observing the following equivalences:
		\begin{align*}
			T_{i_1\ldots i_n}(x)-x\in E&\iff  (A_{i_1}^{-1}\circ \cdots A_{i_n}^{-1})(x)-\sum_{j=1}^{n}(A_{i_1}^{-1}\circ \cdots A_{i_j}^{-1})t_{i_j}-x\in E\\
			&\iff (A_{i_1}^{-1}\circ \cdots A_{i_n}^{-1})(x)-x\in \sum_{j=1}^{n}(A_{i_1}^{-1}\circ \cdots A_{i_j}^{-1})t_{i_j}+E\\
			&\iff (A_{i_1}^{-1}\circ \cdots A_{i_n}^{-1}-I)(x)\in \sum_{j=1}^{n}(A_{i_1}^{-1}\circ \cdots A_{i_j}^{-1})t_{i_j}+E.
		\end{align*}
		Now using that $(A_{i_1}^{-1}\circ \cdots A_{i_n}^{-1}-I)^{-1}=\sum_{k=1}^{\infty}(A_{i_n}\circ \cdots \circ A_{i_1})^k$ we observe
		\begin{align*}
			T_{i_1\ldots i_n}(x)-x\in E&\iff x\in \sum_{k=1}^{\infty}(A_{i_n}\circ \cdots \circ A_{i_1})^k\left(\sum_{j=1}^{n}(A_{i_1}^{-1}\circ \cdots A_{i_j}^{-1})t_{i_j}+E\right)\\
			&\iff x\in \sum_{j=1}^{n}\sum_{k=1}^{\infty}(A_{i_n}\circ \cdots \circ A_{i_1})^{k-1}(A_{i_n}\circ \cdots \circ A_{i_{j+1}})t_{i_j}+\sum_{k=1}^{\infty}(A_{i_n}\circ \cdots \circ A_{i_1})^k E\\
			&\iff x\in \pi(\overline{\i}^{\infty})+\sum_{k=1}^{\infty}A_{\overline{\i}}^k E.
		\end{align*} This completes the proof of the first claim in the statement. 
		
		We now focus on the second part of our lemma. We assume that $\|A_{i}\|<1/2$ for all $i\in \I$. Let $\lambda:=\max_{i\in \I}\|A_{i}\|$. By definition $\lambda\in(0,1/2)$. It is useful at this point to think of the linear map $x\to \sum_{k=1}^{\infty}A_{\overline{\i}}^kx$ as the composition of $x\to \sum_{k=1}^{\infty}A_{\overline{\i}}^{k-1}x$  with $x\to A_{\overline{\i}}x$. For any $x\in \mathbb{C}^d$ we have
		\begin{equation}
			\label{norm lower bound}
			\left\|\sum_{k=1}^{\infty}A_{\overline{\i}}^{k-1} x\right\|\geq \|x\|-\left\|\sum_{k=1}^{\infty}A_{\overline{\i}}^{k}x\right\|\geq \|x\|-\sum_{k=1}^{\infty}\lambda^k\|x\|\geq \|x\|\left(1-\frac{\lambda}{1-\lambda}\right)
		\end{equation}
		and
		\begin{equation}
			\label{norm upper bound}
			\left\|\sum_{k=1}^{\infty}A_{\overline{\i}}^{k-1} x\right\|\leq \|x\|+\left\|\sum_{k=1}^{\infty}A_{\overline{\i}}^{k}x\right\|\leq \|x\|+\sum_{k=1}^{\infty}\lambda^k\|x\|\leq \|x\|\left(1+\frac{\lambda}{1-\lambda}\right)
		\end{equation}
		Let $c=\left(1-\frac{\lambda}{1-\lambda}\right)$ and $c'=\left(1+\frac{\lambda}{1-\lambda}\right)$. Here $c>0$ since $\lambda\in(0,1/2)$. Equation \eqref{norm lower bound} implies that $$B(0,cr)\subseteq \sum_{k=1}^{\infty}A_{\overline{\i}}^{k-1}B(0,r)$$ for all $r>0$. This in turn implies \eqref{eq:contained}. 
		
		To see why \eqref{eq:detsum} is true, notice that  \eqref{norm lower bound} and \eqref{norm upper bound} imply that 
		$$c\|x\|\leq 	\left\|\sum_{k=1}^{\infty}A_{\overline{\i}}^{k-1} x\right\|\leq c'\|x\|$$ for all $x\in \mathbb{C}^d$. Therefore the absolute value of every eigenvalue of $\sum_{k=1}^{\infty}A_{\overline{\i}}^{k-1}$ is bounded above by $c'$ and below by $c$. Now using the fact that the determinant of $\sum_{k=1}^{\infty}A_{\overline{\i}}^{k-1}$ is the product of its eigenvalues, we assert that there exists $C>0$ depending only upon $\max_{i\in \I}\|A_{i}\|$ such that $$\frac{1}{C}\leq \left|Det\left(\sum_{k=1}^{\infty}A_{\overline{\i}}^{k-1}\right)\right|\leq C.$$ Using the fact that the determinant is multiplicative, we can now multiply through by $|Det (A_{\overline{\i}})|$ in the above and conclude \eqref{eq:detsum}. This completes our proof.
	\end{proof}
	Now given an IFS $\S$ satisfying $\|A_i\|<1/2$ for all $i\in \I$,  it follows from Lemma \ref{recurrence to shrinking} that there exists $c>0$ such that for any function $h:\N\to [0,\infty)$ we have $$\bigcap_{m=1}^{\infty}\bigcup_{n=m}^{\infty}\bigcup_{\i\in \I^{n}}\left(\pi(\i^{\infty})+A_{\i}\left(B\left(0,c\cdot \left(\frac{h(n)}{\lambda(\A)^n}\right)^{1/d}\right)\right)\right)\subset R(\S,h).$$ So to prove Theorem \ref{Main thm}.4, it is sufficient to prove a positive measure result for the sets on the left hand side of the above inclusion. The sets $$\bigcap_{m=1}^{\infty}\bigcup_{n=m}^{\infty}\bigcup_{\i\in \I^{n}}\left(\pi(\i^{\infty})+A_{\i}\left(B\left(0,c\cdot \left(\frac{h(n)}{\lambda(\A)^n}\right)^{1/d}\right)\right)\right)$$ are amenable to the same methods we used to prove Theorem \ref{Main thm}.1. In particular, suppose we are given $\{A_i\}$ a finite set of matrices satisfying the assumptions of Theorem \ref{Main thm}.1, then for each $\tt\in \mathbb{R}^{\#\I\cdot d}$, $\i\in \I^{*}$ and $s>0$ we let $$B_{\tt}(\i,s):=\pi_{\tt}(\i^{\infty})+A^n B\left(0,\frac{s}{\lambda(\A)^{n/d}}\right).$$ Lemma \ref{Transverality estimate} can be used in a similar way to prove the following analogue of Lemma \ref{Counting pairs}, the proof of which we omit.
	
	\begin{lemma}
		\label{Counting pairs recurrence}
		Let $\{A_i\}_{i\in \I}$ be a collection of matrices satisfying the assumptions of Theorem \ref{Main thm}. Then for any $R>0$, $n\in \N$, $s>0$ we have 
		$$	\int_{[-R,R]^{\# \I\cdot d}}\#\left\{\j,\k\in \I^n:\j\neq \k,\, B_{\tt}(\j,s)\cap B_{\tt}(\k,s)\neq \emptyset \right\}\, d\tt=\O_{R}(\# \I^n\cdot s^d).$$
	\end{lemma}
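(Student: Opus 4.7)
The plan is to follow the template of Lemma~\ref{Counting pairs} almost verbatim, replacing the centres $\pi_{\tt}(\j\i), \pi_{\tt}(\k\i)$ that appeared there (for a fixed suffix $\i\in\I^\N$) by $\pi_{\tt}(\j^{\infty}), \pi_{\tt}(\k^{\infty})$. Since all matrices equal the same positive diagonal $A$, the sets $B_{\tt}(\j,s)$ and $B_{\tt}(\k,s)$ are translates of the common ellipsoid $E_n := A^n B(0, s/\lambda(\A)^{n/d})$, which is convex and symmetric about the origin, so that $E_n - E_n = 2E_n$. Consequently $B_{\tt}(\j,s)\cap B_{\tt}(\k,s)\neq\emptyset$ is equivalent to
\[
\pi_{\tt}(\j^{\infty}) - \pi_{\tt}(\k^{\infty}) \in 2E_n,
\]
and $\L_{d}(E_n) \asymp |Det(A)|^n s^d/\lambda(\A)^n$.

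For distinct $\j,\k\in \I^n$, the infinite concatenations $\j^{\infty}$ and $\k^{\infty}$ first disagree at position $|\j\wedge\k|$, because the first coordinate at which the two periodic patterns differ is precisely the first coordinate at which $\j$ and $\k$ differ. Hence Lemma~\ref{Transverality estimate}, applied to the Borel set $2E_n$, yields
\[
\L_{\#\I\cdot d}\!\left(\tt\in[-R,R]^{\#\I\cdot d}:\pi_{\tt}(\j^{\infty})-\pi_{\tt}(\k^{\infty})\in 2E_n\right) \ll_R |Det(A)|^{-(|\j\wedge\k|-1)}\L_{d}(E_n).
\]

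To finish, I will fix $\j\in\I^n$, split the sum over $\k$ according to the value $k:=|\j\wedge\k|\in\{1,\ldots,n\}$, and use the trivial count $\#\{\k\in\I^n:|\j\wedge\k|=k\}\le \#\I^{n-k+1}$. Together with the identity $\lambda(\A) = \#\I \cdot |Det(A)|$ (valid since all $A_i$ coincide), the resulting term $\#\I^{n-k+1}|Det(A)|^{n-k+1}/\lambda(\A)^n$ collapses to $\lambda(\A)^{-(k-1)}$. Summing over $k\in\{1,\ldots,n\}$ and then over $\j\in\I^n$ bounds the integral by $\#\I^n s^d \sum_{k=1}^{n}\lambda(\A)^{-(k-1)}$, and the hypothesis $\lambda(\A)>1$ makes the geometric sum $\O(1)$, producing the claimed $\O_R(\#\I^n\cdot s^d)$ estimate.

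I do not expect any genuine obstacle here: the only non-mechanical point is the first-difference identity $|\j^{\infty}\wedge\k^{\infty}|=|\j\wedge\k|$ for distinct $\j,\k\in\I^n$, after which the transversality lemma and the cancellation $\#\I\cdot|Det(A)|=\lambda(\A)$ reproduce the computation of Lemma~\ref{Counting pairs} essentially unchanged.
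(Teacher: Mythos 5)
Your proposal is correct and matches the paper's intended argument: the paper omits the proof of this lemma precisely because it is the analogue of Lemma \ref{Counting pairs} obtained by applying Lemma \ref{Transverality estimate} to the translated ellipsoids $\pi_{\tt}(\j^{\infty})+A^nB(0,s/\lambda(\A)^{n/d})$, which is exactly what you do. The only point needing care, the identity $|\j^{\infty}\wedge\k^{\infty}|=|\j\wedge\k|$ for distinct $\j,\k\in\I^n$, you state and it is correct, after which the counting and the cancellation $\#\I\cdot|Det(A)|=\lambda(\A)$ go through exactly as in Lemma \ref{Counting pairs}.
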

	Once equipped with Lemma \ref{Counting pairs recurrence}, the proof of Theorem \ref{Main thm}.4 follows the same argument as Theorem \ref{Main thm}.1.
	\section{Proof of Theorem \ref{exponential theorem}}
	\label{Section 6}
	Suppose one were given a sequence $(i_{n,m})\in \I^{\N\times \N}$ and a sequence of Borel sets $(E_n)$ satisfying the hypothesis of Theorem \ref{exponential theorem}.1, the key to proving Theorem \ref{exponential theorem}.1 is to understand for small $s>0$ and for arbitrary $n\in \N$ the Lebesgue measure of the set 
	$$\bigcup_{\i\in \I^n}S_{\i,\tt}\left(\pi_{\tt}((i_{n,m})_{m=1}^{\infty})+s\cdot E_n\right)$$ for a typical $\tt$. To obtain meaningful bounds we need to understand the measure of the intersection of two sets in this union for a typical $\tt$. This is the content of Lemma \ref{typical measure}. Using this lemma we can then obtain for arbitrary $R>0$ and $n\in \N$ a useful expression for $$\int_{[-R,R]^{\# \I\cdot d}}\bigcup_{\i\in \I^n}S_{\i,\tt}\left(\pi_{\tt}((i_{n,m})_{m=1}^{\infty})+E_n\right)\, d\tt.$$ This is the content of Proposition \ref{intersection prop}. This latter expression is what allows us to prove our result.
	\begin{lemma}
		\label{typical measure}
		Let $\{A_i\}_{i\in \I}$ be a finite set of matrices satisfying $\|A_{i}\|<1/2$ for all $i\in \I$. Let $(i_{n,m})_{n,m}\in \I^{\N\times \N}$ and $(E_n)$ be a sequence of Borel sets satisfying the assumptions of Theorem \ref{exponential theorem}.1. Then for any $n\in \N$ and $s>0$, if $\i,\j\in \I^n$ are distinct words such that $|\i\wedge \j|=k$, we have  
		\begin{align*}
			&\int_{[-R,R]^{\# \I\cdot d}}\L_{d}\left(S_{\i,\tt}\left(\pi_{\tt}((i_{n,m})_{m=1}^{\infty})+s\cdot E_n\right)\cap S_{\j,\tt}\left(\pi_{\tt}((i_{n,m})_{m=1}^{\infty})+s\cdot E_n\right)\right)\, d\tt\\
			\ll_{R} & \frac{|Det(A_{i_{k}}\ldots A_{i_n})| \cdot |Det(A_{\j})|s^{2d}}{\lambda(\A)^{2n}}.
		\end{align*}
	\end{lemma}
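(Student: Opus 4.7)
The plan is to reduce the integral to a direct application of Lemma \ref{Transverality estimate} via Fubini. Writing $\mathbf{m}=(i_{n,m})_{m=1}^\infty$ and using the defining identity $S_{\i,\tt}(\pi_\tt(\mathbf{m}))=\pi_\tt(\i\mathbf{m})$, we first rewrite
$$S_{\i,\tt}(\pi_\tt(\mathbf{m})+sE_n)=\pi_\tt(\i\mathbf{m})+sA_\i E_n,\qquad S_{\j,\tt}(\pi_\tt(\mathbf{m})+sE_n)=\pi_\tt(\j\mathbf{m})+sA_\j E_n.$$
Translating one of the two sets to the origin, the Lebesgue measure of their intersection equals $\L_d\bigl(sA_\i E_n\cap (sA_\j E_n-\Delta_\tt)\bigr)$, where $\Delta_\tt:=\pi_\tt(\i\mathbf{m})-\pi_\tt(\j\mathbf{m})$.

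Next I would expand this measure as $\int_{\R^d}\chi_{sA_\i E_n}(y)\,\chi_{sA_\j E_n}(y+\Delta_\tt)\,dy$ and apply Fubini to swap the order of integration in $\tt$ and $y$. The inner $\tt$-integral is the measure of those $\tt\in[-R,R]^{\#\I\cdot d}$ for which $\Delta_\tt$ lies in the Borel set $sA_\j E_n-y$. Crucially, this measure only depends on $\L_d(sA_\j E_n-y)=s^d|Det(A_\j)|\L_d(E_n)$, which is independent of $y$; this is what makes the Fubini step payoff cleanly.

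To apply Lemma \ref{Transverality estimate} I would observe that $\i\mathbf{m}$ and $\j\mathbf{m}$ are distinct infinite words sharing the prefix $(i_1,\ldots,i_{k-1})$ and differing at position $k$, so $|\i\mathbf{m}\wedge\j\mathbf{m}|=k$. The lemma then yields, uniformly in $y$,
$$\L_{\#\I\cdot d}\bigl(\tt\in[-R,R]^{\#\I\cdot d}:\Delta_\tt\in sA_\j E_n-y\bigr)\ll_R |Det(A_{i_1\ldots i_{k-1}})|^{-1}\cdot s^d|Det(A_\j)|\L_d(E_n).$$
Integrating this constant bound against $\chi_{sA_\i E_n}(y)$ over $\R^d$, using $\L_d(sA_\i E_n)=s^d|Det(A_\i)|\L_d(E_n)$ and $\L_d(E_n)=\lambda(\A)^{-n}$, the total integral is bounded by
$$s^{2d}\,\frac{|Det(A_\i)|\cdot|Det(A_\j)|}{|Det(A_{i_1\ldots i_{k-1}})|}\cdot\lambda(\A)^{-2n}.$$
The multiplicative identity $|Det(A_\i)|=|Det(A_{i_1\ldots i_{k-1}})|\cdot|Det(A_{i_k}\cdots A_{i_n})|$ finishes the argument.

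I do not anticipate a serious obstacle: the hypotheses on $E_n$ beyond its Lebesgue measure are not needed here (they are used elsewhere in Theorem \ref{exponential theorem}), and the distinct-prefix length $|\i\wedge\j|=k$ is preserved under the common suffix $\mathbf{m}$, which is the only point at which the structure of the centres plays a role. The main subtlety is simply recognising that transversality can be applied pointwise in $y$ with a uniform constant, which enables the Fubini swap to collapse into the clean product form above.
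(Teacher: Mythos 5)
Your proof is correct, and it reaches the bound by a genuinely different route from the paper. The paper's argument discretises: it covers $A_{\j}(s\cdot E_n)$ by a grid of cubes of side $r_n^*$, bounds the intersection cube by cube, and applies Lemma \ref{Transverality estimate} once per cube to a thickened target of the form $r_n^*\cdot\p+A_{\i}\left(s\cdot\left([-r_n,r_n]^d-E_n\right)\right)$; this is exactly where the hypothesis $\L_{d}\left([-r_n,r_n]^d\pm E_n\right)\leq C\L_{d}(E_n)$ enters. You instead write the intersection measure as $\int_{\R^d}\chi_{sA_{\i}E_n}(y)\,\chi_{sA_{\j}E_n}(y+\Delta_{\tt})\,dy$ with $\Delta_{\tt}=\pi_{\tt}(\i(i_{n,m})_{m=1}^{\infty})-\pi_{\tt}(\j(i_{n,m})_{m=1}^{\infty})$, swap the order of integration by Tonelli (the integrand is non-negative and jointly Borel measurable, since $\tt\mapsto\pi_{\tt}(\cdot)$ is affine, hence continuous), and apply Lemma \ref{Transverality estimate} pointwise in $y$ to the Borel set $sA_{\j}E_n-y$, whose measure $s^d|Det(A_{\j})|\lambda(\A)^{-n}$ is independent of $y$; the implied constant in that lemma depends only on $R$ and the matrices, so the bound is indeed uniform in $y$, as you note. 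Integrating over $y\in sA_{\i}E_n$ and using $|Det(A_{\i})|=|Det(A_{i_1\ldots i_{k-1}})|\cdot|Det(A_{i_k}\cdots A_{i_n})|$ together with $|\i(i_{n,m})_{m=1}^{\infty}\wedge\j(i_{n,m})_{m=1}^{\infty}|=|\i\wedge\j|=k$ gives exactly the stated estimate. What your route buys is brevity and slightly greater generality: it uses only that each $E_n$ is Borel with $\L_{d}(E_n)=\lambda(\A)^{-n}$, and dispenses with the regularity hypothesis on $[-r_n,r_n]^d\pm E_n$ (the remaining hypotheses on $(E_n)$ are only needed elsewhere in the proof of Theorem \ref{exponential theorem}); the paper's covering argument, by contrast, requires that hypothesis but avoids any appeal to Fubini on the product space.
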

	\begin{proof}
		Let us begin our proof by fixing $(i_{n,m})_{n,m}$ and $(E_n)$ satisfying the assumptions of Theorem \ref{exponential theorem}.1. Fix $n\in\mathbb{N}$ and $s>0$. For any $\j\in \I^n$ we have  $$S_{\j,\tt}\left(\pi_{\tt}((i_{n,m})_{m=1}^{\infty})+s\cdot E_n\right)=\pi_{\tt}(\j(i_{n,m})_{m=1}^{\infty})+A_{\j}(s\cdot E_n).$$ and 
		\begin{equation*}
					\L_{d}\left(A_{\j}(s\cdot E_n)\right)=\frac{|Det(A_{\j})|s^d}{\lambda(\A)^{n}}.
		\end{equation*}
		Let $r_n>0$ and $C>0$ be as in the statement of Theorem \ref{exponential theorem}. So in particular we have 
		\begin{equation}
			\label{dilated measure}
			\L_{d}\left([-r_n,r_n]^d\pm E_n\right)\leq C\L_{d}(E_n).
		\end{equation} We let $r_n^*>0$ be a sufficiently small real number\footnote{We can simply take $r_n^*$ to be any number sufficiently small so that $[-r_n^*,r_n^*]^d\subset s\cdot A_{\i}[-r_n,r_n]^d$ for all $\i\in \I^n.$} satisfying $$ [-r_n^*,r_n^*]^d- A_{\i}(s\cdot E_n)\subseteq A_{\i}\left(s\cdot\left([-r_n,r_n]^d- E_n\right)\right)$$ and 
		$$ [-r_n^*,r_n^*]^d+ A_{\i}(s\cdot E_n)\subseteq A_{\i}\left(s\cdot\left([-r_n,r_n]^d+ E_n\right)\right)$$for all $\i\in \I^{n}$.
		
		Let us now fix $\i,\j\in \I^n$ such that $|\i\wedge \j|=k$. For $\p=(p_1, \dots, p_d)\in \mathbb Z^d$, denote
		\[
		G(\p)=\prod_{i=1}^{d}[p_ir_n^*,(p_i+1)r_n^*]
		\]
		and then let 
		$$V_{\j}=\left\{\p\in\mathbb{Z}^d:G(\p)\cap A_{\j}(s\cdot E_n)\neq\emptyset\right\}.$$  It follows from the properties of $r_n^*$ listed above, the fact $\L_{d}(E_n)=\lambda(\A)^{-n}$, and \eqref{dilated measure} that  
		\begin{equation}
			\label{inclusion 1}
			 \L_{d}\left(A_{\j}\left(s\cdot \left([-r_n,r_n]^d+E_n\right)\right)\right)\ll \frac{Det(A_{\j})s^d}{\lambda(\A)^n}
		\end{equation}and
		\begin{equation*}
			\label{inclusion 2}
			\bigcup_{\p\in V_{\j}}G(\p)\subseteq [-r_n^*,r_n^*]^d+A_{\j}(s\cdot E_n)\subseteq A_{\j}\left(s\cdot\left([-r_n,r_n]^d+ E_n\right)\right).
		\end{equation*}
		Using the above inclusions and \eqref{inclusion 1} we have
		\begin{equation}
			\label{measure bound}
			\sum_{\p \in V_{\j}}(r_{n}^*)^d\ll \frac{Det(A_\j)s^d}{\lambda(\A)^n}.
		\end{equation}
		Now using the definition of $V_{\j}$ we have 
		\begin{align}
			\label{V_j measure bound}
			&\int_{[-R,R]^{\# \I\cdot d}}\L_{d}\left(S_{\i,\tt}\left(\pi_{\tt}((i_{n,m})_{m=1}^{\infty})+s\cdot E_n\right)\cap S_{\j,\tt}\left(\pi_{\tt}((i_{n,m})_{m=1}^{\infty})+s\cdot E_n\right)\right)\, d\tt\nonumber\\
			\leq &\sum_{\p\in V_j}\int_{[-R,R]^{\# \I\cdot d}}\L_{d}\left(S_{\i,\tt}\left(\pi_{\tt}((i_{n,m})_{m=1}^{\infty})+s\cdot E_n\right)\cap \left(\pi_{\tt}(\j(i_{n,m})_{m=1}^{\infty})+G(\p)\right)\right)\, d\tt\nonumber\\
			\leq &\sum_{\p\in V_\j}\L_{\# \I\cdot d}\left(\tt\in [-R,R]^{\# \I\cdot d}:S_{\i,\tt}\left(\pi_{\tt}((i_{n,m})_{m=1}^{\infty})+s\cdot E_n\right)\cap \left(\pi_{\tt}(\j(i_{n,m})_{m=1}^{\infty})+G(\p)\right)\neq\emptyset\right)\nonumber\\
			&\times (r_n^*)^d.
		\end{align}Now notice that if $\tt$ is such that for some $\p\in V_\j$ we have 
		$$S_{\i,\tt}\left(\pi_{\tt}((i_{n,m})_{m=1}^{\infty})
		+s\cdot E_n\right)\cap \left(\pi_{\tt}(\j(i_{n,m})_{m=1}
		^{\infty})+G(\p)
		\right)\neq\emptyset,$$ then by our choice of $r_n^*$
		\begin{align*}
			\pi_{\tt}(\i(i_{n,m})_{m=1}^{\infty})-\pi_{\tt}(\j(i_{n,m})_{m=1}^{\infty})&\in G(\p)-A_{\i}(s\cdot E_n)\\
			&\subseteq r_n^*\cdot\p+[0,r_n^*]^d-A_{\i}(s\cdot E_n)\\
			&\subseteq r_n^*\cdot\p +A_{\i}\left(s\cdot\left([-r_n,r_n]^d- E_n\right)\right).
		\end{align*} Using this observation, Lemma \ref{Transverality estimate} and \eqref{dilated measure}, we have the following for each $\p\in V_{\j}$
		\begin{align*}
			&\L_{\# \I\cdot d}\left(\tt\in [-R,R]^{\# \I\cdot d}:S_{\i,\tt}\left(\pi_{\tt}((i_{n,m})_{m=1}^{\infty})+s\cdot E_n\right)\cap \left(\pi_{\tt}(\j(i_{n,m})_{m=1}^{\infty}+G(\p)\right)\neq\emptyset\right)\\
			\leq &\L_{\# \I\cdot d}\left(\tt\in [-R,R]^{\# \I\cdot d}:\pi_{\tt}(\i(i_{n,m})_{m=1}^{\infty})-\pi_{\tt}(\j(i_{n,m})_{m=1}^{\infty})\in r_n^*\cdot\p+A_{\i}\left(s\cdot\left([-r_n,r_n]^d- E_n\right)\right) \right)\\
			\ll_{R}& |Det(A_{i_1,\ldots,i_{k-1}})^{-1}|\L_{d}\left(A_{\i}\left(s\cdot\left([-r_n,r_n]^d- E_n\right)\right)\right)\\
			\ll_{R}& |Det(A_{i_1,\ldots,i_{k-1}})^{-1}|\frac{|Det(A_{\i})|s^d}{\lambda(\A)^n}\\
			=&\frac{|Det(A_{i_{k}}\ldots A_{i_n})|s^d}{\lambda(\A)^n}.
		\end{align*}
		Using this bound together with \eqref{measure bound} and \eqref{V_j measure bound}, we then have 
		\begin{align*}
			&\int_{[-R,R]^{\# \I\cdot d}}\L_{d}\left(S_{\i,\tt}\left(\pi_{\tt}((i_{n,m})_{m=1}^{\infty})+s\cdot E_n\right)\cap S_{\j,\tt}\left(\pi_{\tt}((i_{n,m})_{m=1}^{\infty})+s\cdot E_n\right)\right)\, d\tt\\
			\leq &\sum_{\p\in V_j}\L_{\# \I\cdot d}\left(\tt\in [-R,R]^{\# \I\cdot d}:S_{\i,\tt}\left(\pi_{\tt}((i_{n,m})_{m=1}^{\infty})+s\cdot E_n\right)\cap \left(\pi_{\tt}(\j(i_{n,m})_{m=1}^{\infty}+G(\p)\right)\neq\emptyset\right)\\
			&\times (r_n^*)^d\\
			&\ll_{R}\sum_{V_\j}(r_{n}^*)^d\frac{|Det(A_{i_{k}}\ldots A_{i_n})|s^d}{\lambda(\A)^n}\\
			&\ll_{R} \frac{|Det(A_{i_{k}}\ldots A_{i_n})|\cdot |Det(A_{\j})|s^{2d}}{\lambda(\A)^{2n}}.
		\end{align*}This completes the proof.

	\end{proof}

	\begin{prop}
		\label{intersection prop}
		Let $\{A_i\}$ be a finite set of matrices satisfying the assumptions of Theorem \ref{exponential theorem}. Let $(i_{n,m})_{n,m}\in \I^{\N\times \N}$ and $(E_n)$ be a sequence of Borel sets satisfying the assumptions of Theorem \ref{exponential theorem}.1. Then for any $n\in \N$ we have
		$$\int_{[-R,R]^{\# \I\cdot d}}\mathcal{L}_{d}\left(\bigcup_{\i\in \I^n}S_{\i,\tt}\left(\pi_{\tt}((i_{n,m})_{m=1}^{\infty})+s\cdot E_n\right)\right)\, d\tt=s^d\mathcal{L}_{d}([-R,R]^{\# \I \cdot d})+\mathcal{O}_{R}(s^{2d})$$
		and $$\mathcal{L}_{d}\left(\bigcup_{\i\in \I^n}S_{\i,\tt}\left(\pi_{\tt}((i_{n,m})_{m=1}^{\infty})+s\cdot E_n\right)\right)\leq s^{d}$$
		for all $\tt\in \R^{\#\I\cdot d}$. 
	\end{prop}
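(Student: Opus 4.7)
\smallskip

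\noindent\textbf{Proof proposal.} The strategy is to sandwich the Lebesgue measure of the union between the standard subadditivity upper bound and a Bonferroni-type lower bound, and then to show that after integrating over $\tt$ the pairwise intersection term is negligible by Lemma~\ref{typical measure}.

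First I would compute the measure of a single piece. Since $S_{\i,\tt}(\pi_\tt((i_{n,m})_{m=1}^{\infty}) + s\cdot E_n) = \pi_{\tt}(\i(i_{n,m})_{m=1}^{\infty}) + A_{\i}(s\cdot E_n)$, translations are Lebesgue-preserving, and $\L_d(E_n) = \lambda(\A)^{-n}$, one gets the $\tt$-independent identity
\[
\L_d\bigl(S_{\i,\tt}(\pi_\tt((i_{n,m})_{m=1}^{\infty}) + s\cdot E_n)\bigr) = \frac{|Det(A_\i)|\,s^d}{\lambda(\A)^n}.
\]
Summing over $\i \in \I^n$ using the defining identity $\sum_{\i\in\I^n}|Det(A_\i)| = \lambda(\A)^n$ and invoking subadditivity yields the pointwise upper bound
\[
\L_d\Bigl(\bigcup_{\i\in\I^n}S_{\i,\tt}(\pi_\tt((i_{n,m})_{m=1}^{\infty}) + s\cdot E_n)\Bigr) \leq s^d,
\]
which is the second claim of the proposition and which upon integration also gives the upper half of the first claim.

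For the matching lower bound, I would apply Lemma~\ref{Bonferroni} to express
\[
\L_d(\text{union}) \geq s^d - \sum_{\substack{\i,\j\in\I^n\\ \i\neq\j}}\L_d\bigl(S_{\i,\tt}(\cdots)\cap S_{\j,\tt}(\cdots)\bigr),
\]
integrate over $\tt\in[-R,R]^{\#\I\cdot d}$, and show the error term is $\O_R(s^{2d})$. Grouping pairs $(\i,\j)$ by the length $k=|\i\wedge\j|$ of their common prefix and invoking Lemma~\ref{typical measure}, each pair with $|\i\wedge\j|=k$ contributes at most a constant (depending on $R$) times $|Det(A_{i_k}\cdots A_{i_n})|\cdot|Det(A_\j)|s^{2d}\lambda(\A)^{-2n}$. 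A straightforward combinatorial rearrangement, using that the coordinates $i_1,\ldots,i_{k-1}$ of $\i$ and $\j$ agree, reduces the sum over $(\i,\j)$ with $|\i\wedge\j|=k$ to
\[
\sum_{\i\in\I^n}|Det(A_\i)|\cdot\sum_{\substack{j_k\neq i_k\\ j_{k+1},\ldots,j_n}}\prod_{l=k}^n|Det(A_{j_l})| \;\leq\; \lambda(\A)^n\cdot\lambda(\A)^{n-k+1} = \lambda(\A)^{2n-k+1}.
\]
Plugging this back, the sum over $k$ telescopes into $s^{2d}\sum_{k=1}^n\lambda(\A)^{1-k}$, which is $\O(s^{2d})$ precisely because $\lambda(\A)>1$ guarantees the geometric series converges. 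This yields the lower bound and completes the proof.

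The only genuinely delicate step is the bookkeeping that turns Lemma~\ref{typical measure}'s pair-by-pair bound into the uniform $\O_R(s^{2d})$ remainder; the rest is assembling standard ingredients. The convergence $\sum_k \lambda(\A)^{1-k}<\infty$ from the hypothesis $\lambda(\A)>1$ is the crucial input that makes the intersection error of smaller order than the main term $s^d$, which is what allows the first estimate in the proposition to be used later to force $\limsup_{n\to\infty}\L_d(\text{union})>0$ by a reverse Fatou argument.
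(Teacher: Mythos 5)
Your proposal is correct and follows essentially the same route as the paper: subadditivity for the pointwise upper bound, Lemma~\ref{Bonferroni} plus Lemma~\ref{typical measure} integrated over $\tt$ for the matching lower bound, grouping pairs by $k=|\i\wedge\j|$, and the geometric series in $\lambda(\A)^{-1}$ to tame the error. Your rewriting of the determinant product $|Det(A_{i_k}\cdots A_{i_n})|\cdot|Det(A_\j)|$ as $|Det(A_\i)|\cdot\prod_{l=k}^n|Det(A_{j_l})|$ (using that the first $k-1$ symbols of $\i$ and $\j$ coincide) is an equivalent, correct form of the paper's bookkeeping.
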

	
	\begin{proof}
		The second statement follows from the following computation:
		\begin{align*}
			\mathcal{L}_{d}\left(\bigcup_{\i\in \I^n}S_{\i,\tt}\left(\pi_{\tt}((i_{n,m})_{m=1}^{\infty})+s\cdot E_n\right)\right)&\leq \sum_{\i\in \I^n}\mathcal{L}_{d}\left(S_{\i,\tt}\left(\pi_{\tt}((i_{n,m})_{m=1}^{\infty})+s\cdot E_n\right)\right)\\
			&=\sum_{\i\in \I^n}\frac{s^d}{\lambda(\A)^n}|Det(A_\i)|\\
			&=\frac{s^d}{\lambda(\A)^n}\left(\sum_{i\in \I}|Det(A_i)|\right)^n\\
			&=s^d.
		\end{align*}
		We now move on to the first statement. We start with the following inequality which is an immediate application of Lemma \ref{Bonferroni}
		\begin{align*}
			&\int_{[-R,R]^{\# \I\cdot d}}\mathcal{L}_d\left(\bigcup_{\i\in I^n}S_{\i,\tt}\left(\pi_{\tt}((i_{n,m})_{m=1}^{\infty})+s\cdot E_n\right)\right)\, d\tt\\
			\geq &\int_{[-R,R]^{\#\I\cdot d}}\sum_{\i\in \I^n}\mathcal{L}_{d}\left(S_{\i,\tt}\left(\pi_{\tt}((i_{n,m})_{m=1}^{\infty})+s\cdot E_n\right)\right)\, d\tt\\
			&-\int_{[-R,R]^{\# \I\cdot d}}\sum_{\i,\j\in \I^n:\i\neq \j}\L_{d}\left(S_{\i,\tt}\left(\pi_{\tt}((i_{n,m})_{m=1}^{\infty})+s\cdot E_n\right)\cap S_{\j,\tt}\left(\pi_{\tt}((i_{n,m})_{m=1}^{\infty})+s\cdot E_n\right)\right)\, d\tt.
		\end{align*}It follows from the argument given above in the derivation of statement $2$ that $$\sum_{\i\in \I^n}\mathcal{L}_{d}\left(S_{\i,\tt}\left(\pi_{\tt}((i_{n,m})_{m=1}^{\infty})+s\cdot E_n\right)\right)=s^d$$ for all $\tt\in \R^{\# \I \cdot d}$. Therefore we have 
		$$\int_{[-R,R]^{\# \I\cdot d}}\sum_{\i\in \I^n}\mathcal{L}_{d}\left(S_{\i,\tt}\left(\pi_{\tt}((i_{n,m})_{m=1}^{\infty})+s\cdot E_n\right)\right)=s^d\L_{\# \I\cdot d}([-R,R]^d).$$ It remains to bound the second term. Applying Lemma \ref{typical measure} we observe the following:
		\begin{align*}
			&\int_{[-R,R]^{\# \I\cdot d}}\sum_{\i,\j\in \I^n:\i\neq \j}\L_{d}\left(S_{\i,\tt}\left(\pi_{\tt}((i_{n,m})_{m=1}^{\infty})+s\cdot E_n\right)\cap S_{\j,\tt}\left(\pi_{\tt}((i_{n,m})_{m=1}^{\infty})+s\cdot E_n\right)\right)\, d\tt\\
			=&\int_{[-R,R]^{\# \I\cdot d}}\sum_{\j\in \I^n}\sum_{k=1}^n\sum_{\i\in \I^n:|\i\wedge \j|=k}\L_{d}\left(S_{\i,\tt}\left(\pi_{\tt}((i_{n,m})_{m=1}^{\infty})+s\cdot E_n\right)\cap S_{\j,\tt}\left(\pi_{\tt}((i_{n,m})_{m=1}^{\infty})+s\cdot E_n\right)\right)\, d\tt\\
			\ll_{R} & \sum_{\j\in \I^n}\sum_{k=1}^n\sum_{\i\in \I^n:|\i\wedge \j|=k}\frac{|Det(A_{i_{k}}\ldots A_{i_n})|\cdot |Det(A_{\j})|s^{2d}}{\lambda(\A)^{2n}}\\
			=&s^{2d}\sum_{\j\in \I^{n}}\frac{|Det(A_{\j})|}{\lambda(\A)^n}\sum_{k=1}^{n}\sum_{\i\in \I^{n-k+1}}\frac{|Det(A_{\i})|}{\lambda(\A)^n}\\
			=&s^{2d}\sum_{\j\in \I^{n}}\frac{|Det(A_{\j})|}{\lambda(\A)^n}\sum_{k=1}^{n}\frac{\lambda(\A)^{n-k+1}}{\lambda(\A)^n}\\
			\ll&s^{2d}\sum_{\j\in \I^{n}}\frac{|Det(A_{\j})|}{\lambda(\A)^n}.
		\end{align*}
		In the penultimate line we used that $\lambda(\A)>1$ so $\sum_{k=1}^{n}\frac{\lambda(\A)^{n-k+1}}{\lambda(\A)^n}=\O(1).$ Continuing from here, 	
		\begin{align*}							
			s^{2d}\sum_{\j\in \I^{n}}\frac{|Det(A_{\j})|}{\lambda(\A)^n}
			=s^{2d}\frac{\left(\sum_{i\in \I}|Det(A_i)|\right)^n}{\lambda(\A)^n}
			=s^{2d}\frac{\lambda(\A)^n}{\lambda(\A)^n}=s^{2d}.
		\end{align*}
		This completes our proof.
		
	\end{proof}
	Equipped with Proposition \ref{intersection prop} we can now prove Theorem \ref{exponential theorem}.1.
	\begin{proof}[Proof of Theorem \ref{exponential theorem}.1]
		Let $\{A_i\}$ be a finite set of matrices satisfying the assumptions of Theorem \ref{exponential theorem}. Also let $(i_{n,m})_{(n,m)}\in \I^{\N\times \N}$ and $(E_n)$ be as in the statement of this theorem. Now let $R>0$ and $\epsilon>0$ be arbitrary. To complete our proof, it suffices to show that the conclusions of Theorem \ref{exponential theorem}.1 hold for Lebesgue almost every $\tt\in [-R,R]^{\# \I\cdot d}$ outside of a set of measure $\epsilon$. For each $n\in\mathbb{N}$ and $s>0$ we consider the set $$A_{n}(s):=\left\{\tt\in [-R,R]^{\# \I\cdot d}: \mathcal{L}_{d}\left(\bigcup_{\i\in \I^n}S_{\i,\tt}\left(\pi_{\tt}((i_{n,m})_{m=1}^{\infty})+s\cdot E_n\right)\right)\geq s^d(1-\epsilon)\right\}.$$
		Applying statement $2$ from Proposition \ref{intersection prop} we have 
		\begin{align*}
			&\int_{[-R,R]^{\#\I\cdot d}} \mathcal{L}\left(\bigcup_{\i\in \I^n}S_{\i,\tt}\left(\pi_{\tt}((i_{n,m})_{m=1}^{\infty})+s\cdot E_n\right)\right)\, d\tt\\
			\leq & \mathcal{L}_{{\# \I\cdot d}}(A_{n}(s))s^d+\mathcal{L}_{{\# \I\cdot d}}([-R,R]^{\# \I\cdot d}\setminus A_{n}(s)s^d(1-\epsilon).
		\end{align*} Now applying statement $1$ from Proposition \ref{intersection prop} we have  
		\begin{align*}
			\mathcal{L}_{{\# \I\cdot d}}(A_{n}(s))s^d+\mathcal{L}_{{\# \I\cdot d}}([-R,R]^{\# \I\cdot d}\setminus A_{n}(s))s^d(1-\epsilon)\geq s^{d}\mathcal{L}_{{\# \I\cdot d}}([-R,R]^{\# \I\cdot d})+\mathcal{O}_{R}(s^{2d}).
		\end{align*}
		Cancelling terms from either side, this inequality yields
		$$\mathcal{L}_{{\# \I\cdot d}}([-R,R]^{\# \I\cdot d}\setminus A_{n}(s))s^d\epsilon=\mathcal{O}_{R}(s^{2d}).$$ It follows from this equation that we can choose $s\in(0,1)$ sufficiently small in a manner that depends upon $\epsilon$ and $R$ such that $$\mathcal{L}_{{\# \I\cdot d}}([-R,R]^{\# \I\cdot d}\setminus A_{n}(s))<\epsilon,$$ or equivalently $$\mathcal{L}_{{\# \I\cdot d}}(A_{n}(s))>\mathcal{L}_{\# \I\cdot d}([-R,R]^{\#\I\cdot d})-\epsilon.$$ In what follows we will assume that we have chosen such an $s$ and we will denote it by $s^*$.
		
		Let $$A_{\infty}(s^*)=\left\{\tt\in [-R,R]^{\# \I\cdot d}:\tt \in A_{n}(s^*)\textrm{ for i.m. }n\in \N \right\}.$$ By the continuity of the Lebesgue measure from above, we have $$\mathcal{L}_{\# \I\cdot d}(A_{\infty}(s^*))=\mathcal{L}_{\# \I\cdot d}\left(\bigcap_{m=1}^{\infty}\bigcup_{n=m}^{\infty}A_{n}(s^*)\right)=\lim_{m\to\infty}\mathcal{L}_{\# \I\cdot d}\left(\bigcup_{n=m}^{\infty}A_{n}(s^*)\right)\geq \mathcal{L}_{\# \I\cdot d}([-R,R]^{\#\I\cdot d})-\epsilon.$$ Using the above inequality, we see that to prove our result it suffices to show that the desired conclusions hold for any $\tt\in A_{\infty}(s^*)$. This we do below.

		Now using the assumption $s\cdot E_n\subset E_n$ for all $s\in(0,1),$ we have the following for any $\tt \in A_{\infty}(s^*)$ 
		\begin{align*}
			\limsup_{n\to\infty}\L_{d}\left(\bigcup_{\i\in \I^n}S_{\i,\tt}\left(\pi_{\tt}((i_{n,m})_{m=1}^{\infty})+E_n\right)\right)&\geq 	\limsup_{n\to\infty}\L_{d}\left(\bigcup_{\i\in \I^n}S_{\i,\tt}\left(\pi_{\tt}((i_{n,m})_{m=1}^{\infty})+s^*\cdot E_n\right)\right)\\
			&\geq (s^*)^d(1-\epsilon)>0 .
		\end{align*} As such, the first conclusion of Theorem \ref{exponential theorem}.1 follows once we observe that $$\bigcup_{\i\in \I^n}S_{\i,\tt}(\pi_{\tt}((i_{n,m})_{m=1}^{\infty})+E_n)$$ coincides with
		$$\left\{x:\exists (i_1,\ldots,i_n)\in \I^n \textrm{ such that } (T_{i_n,\tt}\circ \cdots \circ T_{i_1,\tt})(x)\in \pi_{\tt}((i_{n,m})_{m=1}^{\infty})+E_n\right\}$$ for any $\tt\in \R^{\# \I\cdot d}$.

		We will now prove that the second conclusion of Theorem \ref{exponential theorem}.1 holds for any $\tt\in A_{\infty}(s^*)$. We want to use continuity of the Lebesgue measure again, however to do this we must know that 
		\begin{equation}
			\label{finite m}
			\L_{d}\left(\bigcup_{n=m}^{\infty}\bigcup_{\i\in\I^n}S_{\i,\tt}\left(\pi_{\tt}((i_{n,m})_{m=1}^{\infty})+ E_n\right)\right)<\infty
		\end{equation}
		for some $m\in \mathbb{N}$. At this point we use our assumption that there exists $Q>0$ such that $E_n\subset[-Q,Q]^d$ for all $n\in \N$. This implies that  $$\bigcup_{n=1}^{\infty}\bigcup_{\i\in\I^n}S_{\i,\tt}\left(\pi_{\tt}((i_{n,m})_{m=1}^{\infty})+ E_n\right)$$ belongs to some bounded domain in $\mathbb{R}^d$ and so \eqref{finite m} holds for all $m$\footnote{This is the only point in our proof where we use the existence of $Q>0$ satisfying $E_n\subset[-Q,Q]^d$ for all $n\in \N$.}.
		
		Now freely using the continuity of the Lebesgue measure from above, we see that the following holds for any $\tt\in A_{\infty}(s^*)$ 
		\begin{align*}
			W(\mathcal{S}_{\tt},(\pi_{\tt}((i_{n,m})_{m=1}^{\infty})_{n=1}^{\infty},(E_n))	=&\mathcal{L}_{d}\left(\bigcap_{m=1}^{\infty}\bigcup_{n=m}^{\infty}\bigcup_{\i\in\I^n}S_{\i,\tt}\left(\pi_{\tt}((i_{n,m})_{m=1}^{\infty})+E_n\right)\right)\\
			=&\lim_{m\to\infty}\mathcal{L}_{d}\left(\bigcup_{n=m}^{\infty}\bigcup_{\i\in \I^n}S_{\i,\tt}\left(\pi_{\tt}((i_{n,m})_{m=1}^{\infty})+ E_n\right)\right)\\
			\geq& (s^*)^d(1-\epsilon)>0.				
		\end{align*} In the final line we used 
		$$	\limsup_{n\to\infty}\L_{d}\left(\bigcup_{\i\in \I^n}S_{\i,\tt}\left(\pi_{\tt}((i_{n,m})_{m=1}^{\infty})+E_n\right)\right)\geq (s^*)^d(1-\epsilon)$$ for all $\tt\in A_{\infty}(s^*)$. This completes our proof.

	\end{proof}
	
	\begin{remark}
		In the above we showed that for any $\epsilon$, there is a set $A_\infty(s^*)$ with a complement of measure at most $\epsilon$, such that for all $\tt\in A_\infty(s^*)$, the measure of the set $W(\mathcal{S}_{\tt},(\pi_{\tt}((i_{n,m})_{m=1}^{\infty})_{n=1}^{\infty},(E_n))$ is bounded below by a positive constant. It should be noted, however, that this constant depends on $A_\infty(s^*)$ and in particular, the larger we insist the measure of $A_\infty(s^*)$, the smaller the constant. In particular, this result is far away from a $0$-full measure type result.  
	\end{remark}
	\begin{remark}
		In Theorem \ref{exponential theorem} we only consider sequences of sets satisfying $\L_{d}(E_n)=\lambda(\A)^{-n}$ for all $n$. It is natural to ask whether one can prove a positive measure result for sequences of sets satisfying $\L_{d}(E_n)=h(n)\lambda(\A)^{-n}$ for all $n$ where $h$ satisfies $\sum_{n=1}^{\infty}h(n)=\infty,$ like in Theorem \ref{Main thm}. The issue is that we are unable to extract from Proposition \ref{intersection prop} the existence of a large well-separated set. In our proof we apply this proposition to show that $$\mathcal{L}_{d}\left(\bigcup_{\i\in \I^n}S_{\i,\tt}\left(\pi_{\tt}((i_{n,m})_{m=1}^{\infty})+s\cdot E_n\right)\right)$$ is roughly $s^{d}$ for a large set of $\tt$. However, it is possible that this is true whilst every element in this union intersects another one of the sets.
	\end{remark}
	\subsection{Proof of Theorem \ref{exponential theorem}.2}
	
	\begin{proof}[Proof of Theorem \ref{exponential theorem}.2]
		Let $\{A_i\}$ be a finite set of matrices satisfying the assumptions of Theorem \ref{exponential theorem} and $\i\in \I^{\N}$. Using Theorem \ref{exponential theorem}.1 for $$(E_n)_{n=1}^{\infty}=\left(B\left(0,\frac{1}{(\L_{d}(B(0,1))\cdot\lambda(\A)^{n})^{1/d}}\right)\right)_{n=1}^{\infty}$$ together with our assumption that $X_{\tt}$ is differentiation regular for Lebesgue almost every $\tt\in \R^{\# \I\cdot d}$, we know that for Lebesgue almost every $\tt\in \R^{\# \I\cdot d}$ the set $W(S_{\tt},\pi_{\tt}(\i),\L_{d}(B(0,1))^{-1/d})$ has positive Lebesgue measure and $X_{\tt}$ is differentiation regular. In what follows we fix a $\tt$ satisfying these two properties. Applying Lemma \ref{Full measure images}, we know that Lebesgue almost every $x\in X_{\tt}$ belongs to $$\bigcup_{\i\in \I^*}S_{\i,\tt}(W(S_{\tt},\pi_{\tt}(\i),\L_{d}(B(0,1))^{-1/d})).$$ To complete our proof, we will now show that if $$x\in \bigcup_{\i\in \I^*}S_{\i,\tt}(W(S_{\tt},\pi_{\tt}(\i),\L_{d}(B(0,1))^{-1/d}))$$ then $$x\in \bigcup_{C>0}W(S_{\tt},\pi_{\tt}(\i),C).$$ Let $y\in W(S_{\tt},\pi_{\tt}(\i),\L_{d}(B(0,1))^{-1/d})$ and $\j\in \I^*$ be such that $S_{\j,\tt}(y)=x$. If $\k\in \I^{*}$ is such that \begin{equation}
			\label{k inclusion}
			T_{\k,\tt}(y)\in B\left(\pi_{\tt}(\i),\frac{1}{(\L_{d}(B(0,1))\lambda(\A)^{|\k|})^{1/d}}\right)
		\end{equation}then $$(T_{\k,\tt}\circ T_{\overline{\j},\tt})(x)\in  B\left(\pi_{\tt}(\i),\frac{1}{(\L_{d}(B(0,1))\lambda(\A)^{|\k|})^{1/d}}\right).$$ Taking $C=\lambda(\A)^{|\j|/d}\cdot \L_{d}(B(0,1))^{-1/d},$ we see that the above implies $$(T_{\k}\circ T_{\overline{\j},\tt})(x)\in  B\left(\pi_{\tt}(\i),\frac{C}{\lambda(\A)^{(|\k|+|\j|)/d}}\right).$$ Because by definition there exists infinitely many $\k\in \I^*$ such that \eqref{k inclusion} holds, it follows that $x\in W(S_{\tt},\pi_{\tt}(\i),C)$ and our result follows.
	\end{proof}	
	
	\subsection{Proof of Theorem \ref{exponential theorem}.3}
	The proof of Theorem \ref{exponential theorem}.3 is similar to the proof of Theorem \ref{exponential theorem}.1. As such we only include an outline. The first step is to use Lemma \ref{recurrence to shrinking}. This lemma allows us to assert that for each $\tt\in \R^{\# \I\cdot d}$ we have $$\bigcap_{m=1}^{\infty}\bigcup_{n=m}^{\infty}\bigcup_{\i\in\I^{\infty}}\left(\pi_{\tt}(\i^{\infty})+\sum_{k=1}^{\infty}A_{\i}^k(E_n)\right)\subset R(\S_{\tt},(E_n))$$ for any sequence of Borel sets $(E_n)_{n=1}^{\infty}.$ Now using Lemma \ref{Transverality estimate} and Lemma \ref{recurrence to shrinking} we can prove the following analogue of Lemma \ref{typical measure}.
	\begin{lemma}
		\label{typical measure analogue}
		Let $\{A_i\}_{i\in \I}$ be a finite set of matrices satisfying $\|A_{i}\|<1/2$ for all $i\in \I$. Let $(E_n)$ be a sequence of Borel sets satisfying the assumptions of Theorem \ref{exponential theorem}.3. Then for any $n\in \N$ and $s>0$, if $\i,\j\in \I^n$ are distinct words such that $|\i\wedge \j|=k$, we have  
		\begin{align*}
			&\int_{[-R,R]^{\# \I\cdot d}}\L_{d}\left(\left(\pi_{\tt}(\i^{\infty})+\sum_{k=1}^{\infty}A_{\i}^k(s\cdot E_n)\right)\cap \left(\pi_{\tt}(\j^{\infty})+\sum_{k=1}^{\infty}A_{\j}^k(s\cdot E_n)\right)\right)\, d\tt\\
			\ll_{R} & \frac{|Det(A_{i_{k}}\ldots A_{i_n})|\cdot |Det(A_{\j})|s^{2d}}{\lambda(\A)^{2n}}.
		\end{align*}
	\end{lemma}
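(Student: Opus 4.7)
The plan is to adapt the proof of Lemma \ref{typical measure} almost verbatim, replacing the image $A_{\i}(s \cdot E_n)$ by the summed shape $F_{\i} := \sum_{k=1}^{\infty} A_{\i}^k (s \cdot E_n)$, and using Lemma \ref{recurrence to shrinking} to show that $F_{\i}$ has the same quantitative geometric properties as $A_{\i}(s \cdot E_n)$. Writing $B_{\i} := \sum_{k=1}^{\infty} A_{\i}^k$, which is a linear operator, the integrand of interest becomes $\L_{d}\bigl((\pi_{\tt}(\i^{\infty}) + F_{\i}) \cap (\pi_{\tt}(\j^{\infty}) + F_{\j})\bigr)$. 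Note that $|\i^{\infty} \wedge \j^{\infty}| = |\i \wedge \j| = k$, so transversality (Lemma \ref{Transverality estimate}) applied to $\pi_{\tt}(\i^{\infty}) - \pi_{\tt}(\j^{\infty})$ will produce a factor $|Det(A_{i_1 \ldots i_{k-1}})^{-1}|$, which is exactly what is needed to convert $|Det(A_{\i})|$ into $|Det(A_{i_k} \ldots A_{i_n})|$ at the end.

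The key preparatory step is a dilation-type estimate for $F_{\i}$ in place of the hypothesis $\L_{d}([-r_n, r_n]^d \pm E_n) \leq C \L_{d}(E_n)$. From \eqref{eq:detsum} one has $\L_{d}(F_{\i}) = |Det(B_{\i})| \cdot s^d \cdot \L_{d}(E_n) \asymp |Det(A_{\i})| s^d / \lambda(\A)^n$, and from \eqref{eq:contained} one has $B_{\i}([-s r_n, s r_n]^d) \supseteq A_{\i}(B(0, c s r_n))$. Since the smallest singular value of $A_{\i}$ is bounded below uniformly in $\i \in \I^n$ for fixed $n$, I can choose $r_n^{*} > 0$ small enough (uniformly over $\i \in \I^n$) so that $[-r_n^{*}, r_n^{*}]^d \subseteq B_{\i}(s \cdot [-r_n, r_n]^d)$. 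Linearity of $B_{\i}$ then gives the crucial inclusion
\[
[-r_n^{*}, r_n^{*}]^d \pm F_{\i} \subseteq B_{\i}\bigl(s \cdot ([-r_n, r_n]^d \pm E_n)\bigr),
\]
and taking Lebesgue measure yields $\L_{d}([-r_n^{*}, r_n^{*}]^d \pm F_{\i}) \ll |Det(A_{\i})| s^d / \lambda(\A)^n$, which is the exact analogue of the dilation bound used in Lemma \ref{typical measure}.

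With this in hand the rest of the argument proceeds as in the proof of Lemma \ref{typical measure}. Introduce the grid $G(\p) = \prod_{i=1}^{d}[p_i r_n^{*}, (p_i + 1) r_n^{*}]$ and the covering set $V_{\j} = \{\p \in \mathbb{Z}^d : G(\p) \cap F_{\j} \neq \emptyset\}$, so that $\sum_{\p \in V_{\j}} (r_n^{*})^d \leq \L_{d}([-r_n^{*}, r_n^{*}]^d + F_{\j}) \ll |Det(A_{\j})| s^d / \lambda(\A)^n$. For each $\p \in V_{\j}$, if $\tt$ is such that $(\pi_{\tt}(\i^{\infty}) + F_{\i}) \cap (\pi_{\tt}(\j^{\infty}) + G(\p)) \neq \emptyset$, then $\pi_{\tt}(\i^{\infty}) - \pi_{\tt}(\j^{\infty}) \in r_n^{*} \cdot \p + [-r_n^{*}, r_n^{*}]^d - F_{\i}$, which by the previous inclusion is contained in $r_n^{*} \cdot \p + B_{\i}(s \cdot ([-r_n, r_n]^d - E_n))$. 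Applying Lemma \ref{Transverality estimate} with this set as the target bounds the measure of such $\tt$ by $\ll_{R} |Det(A_{i_1 \ldots i_{k-1}})^{-1}| \cdot |Det(A_{\i})| s^d / \lambda(\A)^n = |Det(A_{i_k} \ldots A_{i_n})| s^d / \lambda(\A)^n$.

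Summing over $\p \in V_{\j}$, the outer integral is bounded by $|Det(A_{i_k} \ldots A_{i_n})| s^d / \lambda(\A)^n$ times $\sum_{\p \in V_{\j}} (r_n^{*})^d \ll |Det(A_{\j})| s^d / \lambda(\A)^n$, giving the advertised bound. The only non-routine step is the dilation-type inclusion for $F_{\i}$, and that will be the main obstacle: it requires combining both conclusions of Lemma \ref{recurrence to shrinking} (the determinant comparison and the containment of a dilated ball) to recover the geometric control that was immediate in the shrinking-target version where only the single map $A_{\i}$ was applied. Everything else, including the choice of $r_n^{*}$ and the final computation, is a direct transcription of the argument given for Lemma \ref{typical measure}.
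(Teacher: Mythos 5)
Your proposal is correct and follows exactly the route the paper intends (the paper omits the details, noting only that the lemma follows from Lemma \ref{Transverality estimate} and Lemma \ref{recurrence to shrinking} by the argument of Lemma \ref{typical measure}): you substitute $F_{\i}=\bigl(\sum_{k=1}^{\infty}A_{\i}^{k}\bigr)(s\cdot E_n)$ for $A_{\i}(s\cdot E_n)$, recover the dilation bound via \eqref{eq:contained} and \eqref{eq:detsum}, and run the same grid-covering and transversality computation, with $|\i^{\infty}\wedge\j^{\infty}|=|\i\wedge\j|$ supplying the factor $|Det(A_{i_1\ldots i_{k-1}})^{-1}|$. The details you supply (uniform choice of $r_n^{*}$ over the finitely many $\i\in\I^{n}$, linearity giving $[-r_n^{*},r_n^{*}]^d\pm F_{\i}\subseteq B_{\i}(s\cdot([-r_n,r_n]^d\pm E_n))$) are sound.
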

	Combining Lemma \ref{Bonferroni} and Lemma \ref{typical measure analogue} allows us to prove the following statement. 
	\begin{prop}
		\label{recurrence intersection prop}
		Let $\{A_i\}_{i\in \I}$ be a finite set of matrices satisfying $\|A_{i}\|<1/2$ for all $i\in \I$. Let $(E_n)$ be a sequence of Borel sets satisfying the assumptions of Theorem \ref{exponential theorem}.3. Then for any $n\in \N$ and $R>0$ we have
		$$\int_{[-R,R]^{\# \I\cdot d}} \L_{d}\left(\bigcup_{\i\in \I^{n}}\left(\pi_{\tt}(\i^{\infty})+\sum_{k=1}^{\infty}A_{\i}^k(s\cdot E_n)\right)\right)=\frac{s^{d}\cdot\sum_{\i\in \I^n} |Det(\sum_{k=1}^{\infty}A_{\i}^k)|\L_{d}([-R,R]^{\#\I\cdot d})}{\lambda(\A)^n}+\O_{R}(s^{2d})$$ and $$\L_{d}\left(\bigcup_{\i\in \I^{n}}\left(\pi_{\tt}(\i^{\infty})+\sum_{k=1}^{\infty}A_{\i}^k(s\cdot E_n)\right)\right)\leq \frac{s^{d}\cdot \sum_{\i\in \I^n}|Det(\sum_{k=1}^{\infty}A_{\i}^k)|}{\lambda(\A)^n}$$
		for all $\tt\in \R^{\#\I\cdot d}.$
	\end{prop}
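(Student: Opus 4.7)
The plan is to mirror almost verbatim the proof of Proposition \ref{intersection prop}: the pointwise upper bound will come from subadditivity of Lebesgue measure, and the integrated equation from combining this with a Bonferroni lower bound whose off-diagonal correction is controlled via Lemma \ref{typical measure analogue}. The latter lemma plays precisely the role that Lemma \ref{typical measure} played for Proposition \ref{intersection prop}, so once it is available no essentially new ideas are required.

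First I would prove the second, pointwise inequality. Subadditivity gives
\[
\L_{d}\left(\bigcup_{\i\in \I^{n}}\left(\pi_{\tt}(\i^{\infty})+\sum_{k=1}^{\infty}A_{\i}^{k}(s\cdot E_n)\right)\right)\leq \sum_{\i\in \I^{n}}\left|Det\left(\sum_{k=1}^{\infty}A_{\i}^{k}\right)\right|\cdot s^{d}\cdot \L_{d}(E_n),
\]
and using $\L_{d}(E_n)=\lambda(\A)^{-n}$ yields the claim. Integrating over $\tt\in [-R,R]^{\#\I\cdot d}$ also gives the upper half of the first displayed equation, up to the factor $\L_{d}([-R,R]^{\#\I\cdot d})$.

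For the lower half of the equation I would apply Bonferroni (Lemma \ref{Bonferroni}). The diagonal contribution $\sum_{\i\in \I^{n}}\int \L_{d}(\cdot)\, d\tt$ evaluates to the claimed main term exactly. For the off-diagonal correction, I would group pairs $(\i,\j)$ by $k=|\i\wedge \j|$, apply Lemma \ref{typical measure analogue}, and then bound as in Proposition \ref{intersection prop}: the inner sum over $\i$ with $|\i\wedge \j|=k$ of $|Det(A_{i_k}\cdots A_{i_n})|$ is at most $\lambda(\A)^{n-k+1}$, the sum $\sum_{\j\in \I^{n}}|Det(A_{\j})|/\lambda(\A)^{n}$ equals $1$, and the geometric series $\sum_{k\geq 1}\lambda(\A)^{-k+1}$ converges because $\lambda(\A)>1$. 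These three ingredients combine to produce a correction of size $\O_{R}(s^{2d})$.

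Because the argument is essentially a transcription of the shrinking-target proof, I do not expect any real obstacle beyond Lemma \ref{typical measure analogue} itself. One minor technical point worth keeping in mind is that the natural size of $\pi_{\tt}(\i^{\infty})+\sum_{k=1}^{\infty}A_{\i}^{k}(s\cdot E_n)$ is governed by $|Det(\sum_{k=1}^{\infty}A_{\i}^{k})|$ rather than by $|Det(A_{\i})|$; by \eqref{eq:detsum} of Lemma \ref{recurrence to shrinking} these two quantities are comparable up to an absolute constant, so the main term that appears on the right hand side of the claimed equation is of the same order as the one appearing in the analogous estimate of Proposition \ref{intersection prop}.
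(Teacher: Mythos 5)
Your proposal is correct and is essentially the paper's own argument: the paper likewise obtains the pointwise bound from subadditivity (each translate having measure $|Det(\sum_{k\geq 1}A_{\i}^{k})|\,s^{d}\lambda(\A)^{-n}$, independent of $\tt$) and deduces the integrated asymptotic by combining Lemma \ref{Bonferroni} with Lemma \ref{typical measure analogue}, summing the off-diagonal terms exactly as in Proposition \ref{intersection prop} using $\sum_{\i'\in\I^{n-k+1}}|Det(A_{\i'})|=\lambda(\A)^{n-k+1}$, $\sum_{\j\in\I^{n}}|Det(A_{\j})|=\lambda(\A)^{n}$, and the convergent geometric series from $\lambda(\A)>1$. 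Your closing remark that \eqref{eq:detsum} makes the main term comparable to the one in the shrinking-target case is also exactly the point the paper makes when choosing $s^{*}$ in the proof of Theorem \ref{exponential theorem}.3.
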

	Once we are equipped with Proposition \ref{recurrence intersection prop}, the proof of Theorem \ref{exponential theorem}.3 follows the same argument as Theorem \ref{exponential theorem}.1. The choice of $s^*$ in the proof is a priori dependant on the term
	\[
	\frac{\sum_{\i\in \I^n} Det(\sum_{k=1}^{\infty}A_{\i}^k)}{\lambda(\A)^n}
	\]
	appearing in Lemma \ref{recurrence intersection prop}. However, by \eqref{eq:detsum} proved above, this number is essentially a constant with respect to $n$, and so does not introduce extra difficulty when compared to the proof of Theorem \ref{exponential theorem}.1. 
	\section{Proof of Theorem \ref{thm:dim}}
	\label{Section 7}
	
	To prove Theorem \ref{thm:dim}, we will apply the Mass Transference Principle of Wang and Wu \cite[Theorem 3.1]{WanWu}. Rather than just consider iterated function systems, they find lower bounds for the Hausdorff dimension of limsup sets defined by a general system of rectangles of side lengths $\rho_N^{a_i+t_i}$. Loosely speaking, Wang and Wu show that when one has appropriate measure-theoretic knowledge about the limsup set for rectangles with side lengths $\rho_N^{a_i},$ then this can be used to obtain a lower bound for the Hausdorff dimension of the shrunk limsup set defined using the side lengths $\rho_N^{a_i+t_i}.$ Here $\rho_N$ is a sequence shrinking to $0$ with $N$, and $a_1,\dots, a_d, t_1, \dots, t_d\ge 0$ determine the shape of the rectangles.

	Our first step towards proving Theorem \ref{thm:dim} is to establish that for a Lebesgue typical $\tt$ a suitable local ubiquity property is satisfied. See \cite[Definition 3]{WanWu} for the definition of local ubiquity.
	
	\begin{prop}
		\label{ubiquity prop}
		Let $\{A_i\}_{i\in \I}$ be a finite set of matrices satisfying the assumptions of Theorem \ref{thm:dim} and $\j\in \I^{\N}$. Let $U\subset \mathbb{R}^{\# \I\cdot d}$ be as in the statement of Theorem \ref{thm:dim}. Then for Lebesgue almost every $\tt\in U$, there exists a constant $c>0$ such that for any $\epsilon>0$ and ball $B$ contained in $X_{\tt}$ we have 
		$$\limsup_{N\to\infty}\L_{d}\left(\bigcup_{\i\in \I^N}S_{\i,\tt}\left(\pi_{\tt}(\j)+\left[\frac{-1}{\lambda(\A)^{(1-\epsilon)N/d}},\frac{1}{\lambda(\A)^{(1-\epsilon)N/d}}\right]^d\right)\cap B\right)\geq c\cdot \L_{d}(B).$$
	\end{prop}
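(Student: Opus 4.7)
The approach combines a quantitative measure estimate at a ``standard'' target scale, a self-similarity scaling, and a Vitali-type covering argument using differentiation regularity of $X_\tt$. Throughout, denote by $V_N^\epsilon(\tt)$ the union over $\i\in\I^N$ that appears in the statement, and write $F_N^\epsilon:=[-\lambda(\A)^{-(1-\epsilon)N/d},\lambda(\A)^{-(1-\epsilon)N/d}]^d$ for the target cube at level $N$.

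The first step is to apply Proposition \ref{intersection prop} to the family of standard cubes $E_N:=[-\tfrac12\lambda(\A)^{-N/d},\tfrac12\lambda(\A)^{-N/d}]^d$, which satisfies $\L_d(E_N)=\lambda(\A)^{-N}$. A Markov argument applied uniformly in $N$ to the quadratic error $O_R(s^{2d})$ in Proposition \ref{intersection prop}, followed by a Fubini/second-moment upgrade, shows that for Lebesgue almost every $\tt$ and every $\delta>0$ there exist $s^*=s^*(\tt,\delta)$ and $c_0=c_0(\tt,\delta)>0$ such that the set
\[
\mathcal N_0(\tt):=\Bigl\{N\in\N : \L_d\Bigl(\bigcup_{\i\in\I^N}S_{\i,\tt}(\pi_\tt(\j)+s^*E_N)\Bigr)\geq c_0\Bigr\}
\]
has lower density at least $1-\delta$. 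This upgrades the ``infinitely many $N$'' conclusion of Theorem \ref{exponential theorem}.1 to a positive density of good indices.

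The self-similarity step exploits that, for any $\i'\in\I^*$ with $|\i'|=m$ and each $\epsilon>0$, one has $s^*E_N\subseteq F_{N+m}^\epsilon$ for every $N\geq N_0(m,\epsilon,s^*)$, so
\[
S_{\i',\tt}\Bigl(\bigcup_{\i\in\I^N}S_{\i,\tt}(\pi_\tt(\j)+s^*E_N)\Bigr)\subseteq V_{N+m}^\epsilon(\tt)\cap S_{\i',\tt}(X_\tt).
\]
If moreover $N\in\mathcal N_0(\tt)$, taking Lebesgue measure and normalising by $\L_d(S_{\i',\tt}(X_\tt))=|Det(A)|^m\L_d(X_\tt)$ yields the density bound
\[
\L_d\bigl(V_{N+m}^\epsilon(\tt)\cap S_{\i',\tt}(X_\tt)\bigr)\geq c^*\L_d(S_{\i',\tt}(X_\tt)),\qquad c^*:=c_0/\L_d(X_\tt),
\]
with $c^*$ independent of $\i',m,\epsilon$.

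Finally, since $A$ is a positive diagonal matrix, Lemma \ref{Shmerkin lemma} gives that $X_\tt$ is differentiation regular; a Vitali-type argument then produces, for any ball $B\subseteq X_\tt$, finitely many disjoint cylinders $\{S_{\i_k,\tt}(X_\tt)\}_{k=1}^K\subseteq B$ with $\sum_{k=1}^K\L_d(S_{\i_k,\tt}(X_\tt))\geq \tilde\eta\L_d(B)$, for constants $K,\tilde\eta>0$ depending only on $\tt$. Choosing $\delta<1/K$ in the first step, each shifted set $\mathcal N_0(\tt)+|\i_k|$ has lower density at least $1-\delta$, so $\bigcap_{k=1}^K(\mathcal N_0(\tt)+|\i_k|)$ has positive lower density and in particular contains arbitrarily large $M$. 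For each such $M$ the self-similarity bound of the previous step applies simultaneously for every $k$, and summing over the disjoint cylinders gives
\[
\L_d(V_M^\epsilon(\tt)\cap B)\geq c^*\sum_{k=1}^K\L_d(S_{\i_k,\tt}(X_\tt))\geq c^*\tilde\eta\L_d(B),
\]
whence $\limsup_{N\to\infty}\L_d(V_N^\epsilon(\tt)\cap B)\geq c\L_d(B)$ with $c:=c^*\tilde\eta$ depending only on $\tt$. The delicate part is securing a constant $c$ independent of both $\epsilon$ and $B$: the $\epsilon$-independence follows from fixing $s^*$ before $\epsilon$ is chosen, while the $B$-independence rests on a Vitali cover of any ball by a controlled number $K$ of cylinders of comparable total measure, exploiting the structure of the density basis supplied by Lemma \ref{Shmerkin lemma} in the positive diagonal case.
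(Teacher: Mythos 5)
There are two genuine gaps, and they interact. First, your opening step claims that a ``Markov argument plus a Fubini/second-moment upgrade'' applied to Proposition \ref{intersection prop} gives, for almost every $\tt$, a set $\mathcal N_0(\tt)$ of good scales of \emph{lower} density at least $1-\delta$. Proposition \ref{intersection prop} controls each scale $N$ separately: for fixed $s$ the bad set of parameters at scale $N$ has measure $O_R(s^d)$, but this bound is neither summable in $N$ nor does Fatou/Fubini convert ``small bad set for each $N$'' into a density statement along $N$ for a fixed $\tt$ (the bad sets move with $N$). The machinery the paper actually has for density-of-scales statements is Lemma \ref{Density lemma} combined with Lemma \ref{Counting pairs}, and that yields only an \emph{upper} density bound $\overline{d}\geq 1-\epsilon$; Theorem \ref{exponential theorem}.1 yields only ``infinitely many $N$''. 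This matters because your final step intersects the $K$ shifted sets $\mathcal N_0(\tt)+|\i_k|$: with lower density $1-\delta$ and $K\delta<1$ that intersection is infinite, but with upper density (or with ``infinitely many'') it can be empty -- two sets of upper density $1$ can be disjoint, since their good windows need not align. So the aligning-of-scales step is unsupported by anything available.

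Second, the covering claim that any ball $B\subseteq X_\tt$ contains disjoint cylinders $S_{\i_k,\tt}(X_\tt)$, $k\leq K$, of total measure $\geq\tilde\eta\,\L_d(B)$ with $K$ and $\tilde\eta$ depending only on $\tt$ is false precisely in the non-conformal case Theorem \ref{thm:dim} is aimed at. If the diagonal entries $\lambda_i$ are not all equal, a cylinder of generation $m$ contained in $B$ must have $\lambda_{\max}^m\ll\mathrm{diam}(B)$, so its volume $|Det(A)|^m\L_d(X_\tt)$ is of order $(|Det(A)|/\lambda_{\max}^d)^m\L_d(B)$, which tends to $0$ relative to $\L_d(B)$ as $B$ shrinks; hence either $K$ must grow with shrinking $B$ or $\tilde\eta$ must decay. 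Since you choose $\delta<1/K$ and the final constant $c=c^*\tilde\eta$ with $c^*$ determined by $\delta$, the constant you produce ends up depending on $B$, which defeats the uniformity demanded by the proposition. The paper's proof sidesteps both problems at once: it covers a fixed fraction of $B$ by cylinders of a \emph{single} common generation $L=L(B)$ (images of a large cube, made disjoint via Lemma \ref{5r covering lemma}), so that one good scale $N$ from Theorem \ref{exponential theorem}.1 -- merely infinitely many are needed -- serves all cylinders simultaneously at scale $N+L$, and the $B$-dependence through $L$ is absorbed into the slack between the radius $\lambda(\A)^{-N/d}$ and the enlarged radius $\lambda(\A)^{-(1-\epsilon)N/d}$ (via an intermediate $(\log N)^{1/d}$ factor), giving a constant independent of both $B$ and $\epsilon$. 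Your self-similarity step (pushing the scale-$N$ event into $V_{N+m}^\epsilon$ inside a cylinder) is sound and is essentially the paper's mechanism; it is the scale-alignment and the uniform Vitali claim that need to be replaced by the equal-generation construction.
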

	\begin{proof}
		We begin our proof by fixing a $\tt\in U$ belonging to the full measure set for which the conclusion of Theorem \ref{exponential theorem}.1 holds with $(E_n)$ a sequence of balls with Lebesgue measure $\lambda(\A)^{-n}$ and the targets centred at $\pi_{\tt}(\j)$. By our assumptions, we may also assume that $\tt$ is such that $X_{\tt}$ has non-empty interior. 
		
		We now  fix a ball $B$ contained in $X_{\tt}$. We will show that there exists $c>0,$ that does not depend upon our choice of $B$, such that 
		\begin{equation}
			\label{MTP WTS}
			\limsup_{N\to\infty}\L_{d}\left(\bigcup_{\i\in \I^n}S_{\i,\tt}\left(B\left(\pi_{\tt}(\j),\left(\frac{\log N}{\lambda(\A)^{N}}\right)^{1/d}\right)\right)\cap B\right)\geq c\cdot \L_{d}(B)
		\end{equation}
		Since for any $\epsilon>0$ we have $$B\left(\pi_{\tt}(\j),\left(\frac{\log N}{\lambda(\A)^{N}}\right)^{1/d}\right)\subseteq \pi_{\tt}(\j)+\left[\frac{-1}{\lambda(\A)^{(1-\epsilon)N/d}},\frac{1}{\lambda(\A)^{(1-\epsilon)N/d}}\right]^d$$ for $N$ sufficiently large, we see that \eqref{MTP WTS} implies our proposition. We now focus on proving \eqref{MTP WTS}.
		
		Let $C$ be a large cube containing $X_{\tt}$ and $L\in \mathbb{N}$ be sufficiently large that $$Diam(S_{\i}(C))\leq Diam(B)/3$$
		for all $\i \in \I^{L}$. Let $$\Omega_{B}:=\left\{\i\in \I^{L}:S_{\i}(C)\cap B/2\neq\emptyset \right\}.$$ Then we have 
		$$B/2\subseteq \bigcup_{\i \in \Omega_{B}}S_{\i}(C) \subseteq B.$$ Applying Lemma \ref{5r covering lemma} to the rectangles $\{S_{\i}(C):\i\in \Omega_{B'}\},$ we see that there exists $\widetilde{\Omega_{B'}}\subset \Omega_{B'}$ satisfying the following properties:
		\begin{enumerate}
			\item We have $$S_{\i}(C)\cap S_{\j}(C)=\emptyset $$ for distinct $\i,\j\in \widetilde{\Omega_{B'}}.$
			\item $$\bigcup_{\i \in \Omega_{B'}}S_{\i}(C)\subseteq \bigcup_{\i \in \widetilde{\Omega_{B'}}}3\cdot S_{\i}(C).$$
		\end{enumerate}
		By our assumptions on $\tt,$ we know that there exists $c>0$ such that 
		\begin{equation}
			\label{measure infinitely often}
			\L_{d}\left(\bigcup_{\i\in \I^N}S_{\i}\left(\pi_{\tt}(\j)+E_{N} \right)\right)\geq c
		\end{equation} for infinitely many $N,$ where $E_N$ is the ball centred at the origin satisfying $\L_{d}(E_N)=\lambda(\A)^{-N}$. Using properties $1.$ and $2$. above, we see that if $N$ is such that \eqref{measure infinitely often} is satisfied then we have:
		\begin{align*}
			\L_{d}\left(\bigcup_{\i\in \I^{N+L}}S_{\i}\left(\pi_{\tt}(\j)+E_{N} \right)\cap B\right)
			\geq &\L_{d}\left(\bigcup_{\i'\in\widetilde{\Omega_{B}},\, \i''\in \I^{N}}S_{\i'\i''}\left(\pi_{\tt}(\j)+E_{N}  \right)\right)\\
			=&\L_{d}\left(\bigcup_{\i'\in\widetilde{\Omega_{B}}}S_{\i'}\left(\bigcup_{\i''\in \I^{N}}S_{\i''}\left(\pi_{\tt}(\j)+E_{N} \right) \right)\right)\\
			\geq & c\cdot \sum_{\i'\in\widetilde{\Omega_{B}}}Det(A)^{L}\\
			=&\frac{c}{3^d\cdot\L_{d}(C)}\sum_{\i'\in\widetilde{\Omega_{B}}}Det(A)^{L}\cdot 3^d\cdot\L_{d}(C)\\
			=&\frac{c}{3^d\cdot\L_{d}(C)} \sum_{\i'\in\widetilde{\Omega_{B}}}\L_{d}(3\cdot S_{\i'}(C))\\
			\geq&\frac{c}{3^d\cdot\L_{d}(C)}\L_{d}\left(\bigcup_{\i \in \widetilde{\Omega_{B}}}3\cdot S_{\i}(C)\right)\\
			\geq &\frac{c}{3^d\cdot\L_{d}(C)}\L_{d}\left(\bigcup_{\i \in \Omega_{B}}S_{\i}(C)\right)\\
			\geq &\frac{c}{3^d\cdot\L_{d}(C)}\L_{d}(B/2)\gg \frac{c}{3^d\cdot\L_{d}(C)}\L_{d}(B).
		\end{align*}
		Summarising, we have shown that 
		$$\limsup_{N\to\infty}\L_{d}\left(\bigcup_{\i\in \I^{N+L}}S_{\i}\left(\pi_{\tt}(\j)+E_{N} \right)\cap B \right)\gg\L_{d}(B').$$ Equation \eqref{MTP WTS} now follows once we observe that for $N$ sufficiently large, if $$x\in S_{\i}\left(\pi_{\tt}(\j)+E_{N} \right)$$ for some $\i\in \I^{N+L},$ then 
		$$x\in S_{\i}\left(B\left(\pi_{\tt}(\j),\left(\frac{\log (N+L)}{\lambda(\A)^{N+L}}\right)^{1/d}\right)\right).$$ 
	\end{proof}
	Equipped with Proposition \ref{ubiquity prop} we can now prove Theorem \ref{thm:dim}.
	
	\begin{proof}[Proof of Theorem \ref{thm:dim}]
		Fix $\j\in \I^{\N}$. Let $\tt\in U$ belong to the full measure set for which the conclusion of Proposition \ref{ubiquity prop} holds and for which $X_{\tt}$ has non-empty interior. Let $B$ be an arbitrary ball with centre in $X_{\tt}$. Replacing $B$ with a ball contained within $B$ if necessary, we can assume without loss of generality that $B\subset X_{\tt}$. This follows from our assumption on $\tt$ that ensures $X_{\tt}$ has non-empty interior. We fix $s>1$. We now set out to obtain a lower bound for the Hausdorff dimension of $W_{s}(S_{\tt},\pi_{\tt}(\j))\cap B.$  Let $\epsilon>0$ be arbitrary. Instead of directly bounding the Hausdorff dimension of  $W_{s}(S_{\tt},\pi_{\tt}(\j))\cap B$ from below, we will bound the Hausdorff dimension of 
		$$\tilde{W}(\epsilon,B):=B\cap \bigcap_{m=1}^{\infty}\bigcup_{n=m}^{\infty}\bigcup_{\i\in \I^n}S_{\i,\tt}\left(\pi_{\t}(\j)+\left[-\frac{1}{\lambda(\A)^{(s+\epsilon)n/d}},\frac{1}{\lambda(\A)^{(s+\epsilon)n/d}}\right]^d\right)$$
		from below. Crucially $$\tilde{W}(\epsilon,B)\subset W_{s}(S_{\tt},\pi_{\tt}(\j))\cap B$$ for any $\epsilon$. Therefore a lower bound for the Hausdorff dimension of $\tilde{W}(\epsilon,B)$ is also a lower bound for the Hausdorff dimension of $W_{s}(S_{\tt},\pi_{\tt}(\j))\cap B$. We emphasise that $\tilde{W}(\epsilon,B)$ is a limsup set formed of rectangles with centres contained in $\{S_{\i,\tt}(\pi_{\tt}(\j))\}_{\i\in \cup_{n=1}^{\infty}\I^n}$ and side lengths $$\frac{\lambda_{i}^{N}}{\lambda(\A)^{(s+\epsilon)N/d}}$$ for each $1\leq i\leq d$.
		
		Proposition \ref{ubiquity prop} tells us that the sequence of rectangles with centres $\{S_{\i,\tt}(\pi_{\tt}(\j))\}_{\i\in \cup_{n=1}^{\infty}\I^n}$ and side lengths $$\frac{\lambda_{i}^{N}}{\lambda(\A)^{(1-\epsilon)N/d}}$$ for each $1\leq i\leq d$ satisfies the local ubiquity condition as in \cite[Definition 3.1]{WanWu}\footnote{Using the language of \cite{WanWu}, here we are taking $J_{n}=\I^n$ for each $n\in \N$.}. Using the language of Wang and Wu, if we take $\rho_{N}=\lambda(\A)^{-N},$ $$a_{i}(\epsilon)=\frac{\log \lambda_i} {\log\lambda(\A)^{-1}}+\frac{1-\epsilon}{d}\qquad \textrm{ and }\qquad  t_{i}(\epsilon)=\frac{s-1+2\epsilon}{d}$$ then we have 
		$$\rho_{N}^{a_i(\epsilon)}=\frac{\lambda_{i}^{N}}{\lambda(\A)^{(1-\epsilon)N/d}}\qquad \textrm{ and }\rho_{N}^{a_i(\epsilon)+t_i(\epsilon)}=\frac{\lambda_{i}^{N}}{\lambda(\A)^{(s+\epsilon)N/d}}$$ for each $1\leq i \leq d$. Equipped with this notation, we can now directly apply Theorem 3.1 from \cite{WanWu} to obtain
		\begin{align*}
			&\dim_{H}(W_{s}(S_{\tt},\pi_{\tt}(\j))\cap B)\\
			&\geq \dim_{H}(\tilde{W}(\epsilon,B))\\
			&\geq \min_{p\in \mathcal{P(\epsilon)}}\left\{\# K_{1}(\epsilon) + \# K_{2}(\epsilon)+\frac{\sum_{i\in K_{3}(\epsilon)}a_i-\sum_{k\in K_{2}}t_{i}}{p}\right\}\\
			&=\min_{p\in \mathcal{P}(\epsilon)}\left\{\# K_{1}(\epsilon) + \# K_{2}(\epsilon)\left(1-\frac{s-1+2\epsilon}{dp}\right)+\frac{\# K_{3}(\epsilon)(1-\epsilon)}{dp}+\sum_{i\in K_{3}(\epsilon)}\frac{\log \lambda_i} {p\log\lambda(\A)^{-1}}\right\}
		\end{align*}
		where $P(\epsilon)=\{a_i(\epsilon),a_i(\epsilon)+t_i(\epsilon):1\leq i\leq d\}$ and $$K_{1}(\epsilon)=\{i:a_i\geq p\},\, K_{2}(\epsilon)=\{i:a_i+t_i\leq p\},\, K_{3}(\epsilon)=\{1,\ldots,d\}\setminus (K_{1}(\epsilon)\cup K_{2}(\epsilon)).$$
		It is a simple, albeit tedious calculation, to check that \begin{align*}
			&\lim_{\epsilon \to 0}\min_{p\in \mathcal{P}(\epsilon)}\left\{\# K_{1}(\epsilon) + \# K_{2}(\epsilon)\left(1-\frac{s-1+2\epsilon}{dp}\right)+ \frac{\#K_{3}(\epsilon)(1-\epsilon)}{dp}+\sum_{i\in K_{3}(\epsilon)}\frac{\log \lambda_i} {p\log\lambda(\A)^{-1}}\right\}\\
			=&\min_{p\in P}\left\{\# K_{1} + \# K_{2}\left(1-\frac{s-1}{dp}\right)+\frac{\# K_{3}}{dp}+\sum_{i\in K_{3}}\frac{\log \lambda_i} {p\log\lambda(\A)^{-1}}\right\}
		\end{align*}
		where $$a_{i}=\frac{\log \lambda_i} {\log\lambda(\A)^{-1}}+\frac{1}{d}$$
		$P=\{a_i,a_i+\frac{s-1}{d}:1\leq i\leq d\}$ and $$K_{1}=\{i:a_i\geq p\},\, K_{2}=\left\{i:a_i+\frac{s-1}{d}\leq p\right\},\, K_{3}=\{1,\ldots,d\}\setminus (K_{1}\cup K_{2}).$$ Therefore we have $$\dim_{H}(W_{s}(S_{\tt},\pi_{\tt}(\j))\cap B)\geq \min_{p\in P}\left\{\# K_{1} + \# K_{2}\left(1-\frac{s-1}{dp}\right)+\frac{\# K_{3}}{dp}+\sum_{i\in K_{3}}\frac{\log \lambda_i} {p\log\lambda(\A)^{-1}}\right\}$$ and our proof is complete.
	\end{proof}
	\section{Proofs of Theorems \ref{Garsia thm} and \ref{GarsiaMTP}}
	\label{Garsia section}
	The key technical result which allows us to prove Theorem \ref{Garsia thm} is the following lemma. Before stating it, we introduce some useful notation. Given $\lambda\in(1/2,1),$ we let $\pi_{\lambda}:\{0,1\}^{\N}\to [0,\frac{1}{1-\lambda}]$ be given by $$\pi_{\lambda}((a_i))=\sum_{i=1}^{\infty}a_i\lambda^{i-1}.$$
	\begin{lemma}
		\label{Garsia separation}
		Let $\lambda\in(1/2,1)$ be such that $\lambda^{-1}$ is a Garsia number. Then there exists $C>0$ such that for any $\a,\a'\in\{0,1\}^n$ satisfying $\a\neq \a',$ we have that $$\left|\pi_{\lambda}(\a^{\infty})-\pi_{\lambda}(\a'^{\infty})\right|\geq \frac{C}{2^n}.$$
	\end{lemma}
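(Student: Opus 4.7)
My plan is to exploit the periodicity of $\mathbf{a}^{\infty}$ to reduce the bound to a lower estimate on a polynomial in $\lambda$ with coefficients in $\{-1,0,1\}$, and then use the defining algebraic features of a Garsia number. The geometric series identity gives
\[
\pi_{\lambda}(\mathbf{a}^{\infty})-\pi_{\lambda}(\mathbf{a}'^{\infty})=\frac{P(\lambda)}{1-\lambda^{n}},\qquad P(x):=\sum_{k=1}^{n}(a_{k}-a_{k}')\,x^{k-1},
\]
where $P\in\mathbb{Z}[x]$ is a nonzero polynomial of degree less than $n$ with coefficients in $\{-1,0,1\}$. Since $0<1-\lambda^{n}<1$, it suffices to produce a constant $C>0$ depending only on $\lambda$ with $|P(\lambda)|\geq C\cdot 2^{-n}$.

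To exploit the Garsia hypothesis, I would pass from $\lambda$ to $\beta=\lambda^{-1}$: multiplying through by $\beta^{n-1}$ yields $\beta^{n-1}P(\lambda)=\widetilde{P}(\beta)$, where $\widetilde{P}$ is the reciprocal of $P$, still a nonzero polynomial in $\mathbb{Z}[x]$ of degree less than $n$ with coefficients in $\{-1,0,1\}$. Let $\beta=\beta_{1},\ldots,\beta_{d}$ be the Galois conjugates of $\beta$. Granting for the moment the nonvanishing $\widetilde{P}(\beta)\neq 0$, the product $\prod_{i=1}^{d}\widetilde{P}(\beta_{i})$ is a nonzero rational integer, hence has absolute value at least $1$. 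A crude term-by-term bound using $|\beta_{i}|>1$ gives $|\widetilde{P}(\beta_{i})|\ll |\beta_{i}|^{n}$, and the Garsia normalisation $|\beta_{1}\cdots\beta_{d}|=2$ then gives $\prod_{i=2}^{d}|\beta_{i}|^{n}=(2\lambda)^{n}$. Combining these, $|\widetilde{P}(\beta)|\gg (2\lambda)^{-n}$, which translates back through $P(\lambda)=\lambda^{n-1}\widetilde{P}(\beta)$ to $|P(\lambda)|\gg 2^{-n}$, as required.

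The main obstacle is the nonvanishing claim $\widetilde{P}(\beta)\neq 0$, which is the heart of the argument and the only place the Garsia property is truly essential. Suppose for contradiction that $\widetilde{P}(\beta)=0$. Then the minimal polynomial $m$ of $\beta$ divides $\widetilde{P}$ in $\mathbb{Z}[x]$, since $m$ is monic with integer coefficients (polynomial division by a monic polynomial stays within $\mathbb{Z}[x]$). Evaluating at $0$ gives $\widetilde{P}(0)=m(0)\,Q(0)$ for some $Q\in\mathbb{Z}[x]$. Because $\beta$ is a Garsia number its norm is $\pm 2$, whence $|m(0)|=2$; but $\widetilde{P}(0)\in\{-1,0,1\}$, forcing $\widetilde{P}(0)=0$. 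We can therefore factor an $x$ out of $\widetilde{P}$ and repeat the argument with the quotient, which still has coefficients in $\{-1,0,1\}$, is nonzero, and still vanishes at $\beta\neq 0$. Iterating, we would eventually arrive at a polynomial whose constant term is $\pm 1$ yet divisible by $2$, a contradiction.
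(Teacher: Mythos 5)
Your argument is correct and follows essentially the same route as the paper's proof: reduce via the geometric series to a lower bound on a $\{-1,0,1\}$-polynomial evaluated at $\lambda$, pass to $\beta=\lambda^{-1}$ and its Galois conjugates, and use that the norm is $\pm 2$ together with all conjugates exceeding $1$ in modulus to bound the nonzero integer $\prod_i \widetilde{P}(\beta_i)$ from below. The only difference is that you spell out the nonvanishing claim (that a Garsia number cannot be a root of a nontrivial $\{-1,0,1\}$-polynomial) via the constant-term/divisibility argument, which the paper simply asserts.
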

	Lemma \ref{Garsia separation} is well known for differences of the form $\pi_{\lambda}(\a0^{\infty})-\pi_{\lambda}(\a'0^{\infty})$ (see \cite{Gar}). Our proof of Lemma \ref{Garsia separation} is a minor adaptation but we include the details for completion.
	
	\begin{proof}
		Let us fix $\lambda\in(1/2,1)$ the reciprocal of a Garsia number, and $\a,\a'\in\{0,1\}^n$ two distinct sequences. We observe that 
		\begin{align*}
			\left|\pi_{\lambda}(\a^{\infty})-\pi_{\lambda}(\a'^{\infty})\right|&=\left|\sum_{k=0}^{\infty}\lambda^{kn}\left(\sum_{i=1}^{n}a_i\lambda^{i-1}\right)-\sum_{k=0}^{\infty}\lambda^{kn}\left(\sum_{i=1}^{n}a_i'\lambda^{i-1}\right)\right|\\
			&=\frac{1}{1-\lambda^n}\left|\sum_{i=1}^{n}a_i\lambda^{i-1}-\sum_{i=1}^{n}a_i'\lambda^{i-1}\right|.
		\end{align*}
		Therefore to prove our lemma, we need to show that there exists $C>0$ which does not depend upon $\a$ or $\a',$ such that\footnote{This is how the separation property satisfied by Garsia numbers is usually formulated.} 
		\begin{equation}
			\label{NTS}
			\left|\sum_{i=1}^{n}a_i\lambda^{i-1}-\sum_{i=1}^{n}a_i'\lambda^{i-1}\right|\geq \frac{C}{2^n}.
		\end{equation}  Let $\beta=\lambda^{-1}$ and let $\gamma_{1},\ldots,\gamma_{k}$ denote the Galois conjugates of $\beta$. Since $\beta$ has norm $\pm 2$ it is not a zero of any non-trivial polynomials with coefficients in $\{-1,0,1\}$. Therefore $\sum_{i=1}^{n}a_i\beta^{n-i+1}-\sum_{i=1}^{n}a_i'\beta^{n-i+1}\neq 0$. Moreover we also have $\sum_{i=1}^{n}a_i\gamma_j^{n-i+1}-\sum_{i=1}^{n}a_i'\gamma_j^{n-i+1}\neq 0$ for each Galois conjugate of $\beta$. Since $\beta$ is an algebraic integer we have
		\begin{align*}
			1&\leq \left|\sum_{i=1}^{n}a_i\beta^{n-i+1}-\sum_{i=1}^{n}a_i'\beta^{n-i+1}\right| \cdot \prod_{j=1}^{k}\left| \sum_{i=1}^{n}a_i\gamma_j^{n-i+1}-\sum_{i=1}^{n}a_i'\gamma_j^{n-i+1}\right|\\
			&\ll  \left|\sum_{i=1}^{n}a_i\beta^{n-i+1}-\sum_{i=1}^{n}a_i'\beta^{n-i+1}\right|\cdot \prod_{j=1}^{k}\left|\gamma_{j}^n\right|\\
			&=\left|\sum_{i=1}^{n}a_i\lambda^{i-1}-\sum_{i=1}^{n}a_i'\lambda^{i-1}\right|\cdot |\beta|^{n}\cdot \prod_{j=1}^{k}\left|\gamma_{j}^n\right|\\
			&=\left|\sum_{i=1}^{n}a_i\lambda^{i-1}-\sum_{i=1}^{n}a_i'\lambda^{i-1}\right|\cdot 2^n.
		\end{align*}
		The last line follows from the fact $\beta$ has norm $\pm 2$ so $|\beta\cdot \prod_{j=1}^{k}\gamma_j|=2.$ We have shown that $$\left|\sum_{i=1}^{n}a_i\lambda^{i-1}-\sum_{i=1}^{n}a_i'\lambda^{i-1}\right|\cdot 2^n\gg 1.$$ Our result now follow upon dividing both sides by $2^n$.
	\end{proof}
	The following lemma immediately follows from Lemma \ref{recurrence to shrinking}. It is essentially the first part of this lemma rewritten for our current purposes. 
	
	\begin{lemma}
		\label{Recurrence interpretation}
		Let $\lambda\in(1/2,1)$ and $\a\in \{0,1\}^n$. Then $x\in B\left(\pi_{\lambda}(\overline{\a}^{\infty}),\frac{\lambda^{|\a|} r}{1-\lambda^{|\a|}}\right)$ if and only if $T_{\a}(x)\in B(x,r)$. 
	\end{lemma}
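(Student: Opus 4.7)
The plan is to derive this as an immediate specialization of the first part of Lemma \ref{recurrence to shrinking} to the one-dimensional IFS $S_\lambda=\{S_0(x)=\lambda x,\,S_1(x)=\lambda x+1\}$. Lemma \ref{recurrence to shrinking} gives, for any Borel set $E\subset\mathbb{R}^d$, the equivalence
\[
x\in \pi(\overline{\i}^\infty)+\sum_{k=1}^\infty A_{\overline{\i}}^k(E) \iff T_{\i}(x)\in x+E,
\]
so I would simply instantiate this with $\i=\a$, $d=1$, and $E=B(0,r)=[-r,r]$.

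In the setting of $S_\lambda$ each matrix $A_i$ is just scalar multiplication by $\lambda$, hence for any word $\a\in\{0,1\}^n$ the operator $A_{\overline{\a}}$ is multiplication by $\lambda^n$. I would then compute the geometric sum
\[
\sum_{k=1}^\infty A_{\overline{\a}}^k \;=\; \sum_{k=1}^\infty \lambda^{nk} \;=\; \frac{\lambda^n}{1-\lambda^n},
\]
which is convergent since $\lambda\in(1/2,1)$ so that $\lambda^n\in(0,1)$. Applying this operator to the symmetric interval $[-r,r]$ yields
\[
\sum_{k=1}^\infty A_{\overline{\a}}^k\bigl([-r,r]\bigr) \;=\; \left[-\frac{\lambda^n r}{1-\lambda^n},\frac{\lambda^n r}{1-\lambda^n}\right] \;=\; B\!\left(0,\frac{\lambda^n r}{1-\lambda^n}\right).
\]

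Substituting back into the equivalence from Lemma \ref{recurrence to shrinking}, the left-hand side reads $x\in \pi_\lambda(\overline{\a}^\infty)+B\!\left(0,\tfrac{\lambda^n r}{1-\lambda^n}\right)$, i.e.\ $x\in B\!\left(\pi_\lambda(\overline{\a}^\infty),\tfrac{\lambda^{|\a|} r}{1-\lambda^{|\a|}}\right)$, while the right-hand side reads $T_{\a}(x)\in x+B(0,r)=B(x,r)$. This is exactly the claimed equivalence, so the proof is complete. There is no real obstacle here: the lemma is purely a translation of Lemma \ref{recurrence to shrinking} to the one-dimensional, single-contraction-ratio setting, recorded separately only for notational convenience in the subsequent Garsia arguments.
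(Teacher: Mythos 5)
Your proof is correct and follows exactly the approach the paper intends: the paper itself states that Lemma \ref{Recurrence interpretation} ``immediately follows from Lemma \ref{recurrence to shrinking}'' and gives no further details, and your explicit specialization to $d=1$, $A_{\overline{\a}}=\lambda^n$, together with the geometric-series computation $\sum_{k\geq 1}\lambda^{nk}=\lambda^n/(1-\lambda^n)$ applied to $[-r,r]$, is precisely the verification being left to the reader.
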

	
	We are almost ready to proceed with our proof of Theorem \ref{Garsia thm}. However, before we do, we need to recall two results.
	\begin{thm}[Garsia \cite{Gar}]
		\label{Garsia's thm}
		If $\lambda$ is the reciprocal of a Garsia number then the Bernoulli convolution $\mu_{\lambda}$ is absolutely continuous.
	\end{thm}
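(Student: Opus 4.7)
The plan is to prove the stronger assertion that $\mu_\lambda$ admits a bounded density with respect to Lebesgue measure; this follows once one shows $\mu_\lambda(I)=O(|I|)$ uniformly over intervals $I$. The central ingredient is the separation estimate already extracted in the proof of Lemma~\ref{Garsia separation}: for distinct $\mathbf{a},\mathbf{a}'\in\{0,1\}^n$ the truncated sums $s(\mathbf{a})=\sum_{i=1}^{n}a_i\lambda^{i-1}$ satisfy $|s(\mathbf{a})-s(\mathbf{a}')|\geq C/2^n$ for some $C>0$ independent of $n$. The idea is to condition on the first $n$ Bernoulli digits and use the separation to control how many partial sums can fall near any prescribed interval.

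To carry this out, I would write the defining random variable $X=\sum_{i=0}^{\infty}\epsilon_i\lambda^i$ as $S_n+\lambda^n R$, where $S_n=\sum_{i=0}^{n-1}\epsilon_i\lambda^i$ and $R$ is an independent copy of $X$. Let $\nu_n$ denote the law of $\lambda^n R$, which is supported on $[0,\lambda^n/(1-\lambda)]$. Since $S_n$ is uniform on its $2^n$ values (pairwise distinct thanks to the separation), conditioning gives
\[
\mu_\lambda(I)=\frac{1}{2^n}\sum_{\mathbf{a}\in\{0,1\}^n}\nu_n(I-s(\mathbf{a})).
\]
Given an interval $I$ of length $\delta$, choose $n$ so that $\lambda^n\asymp\delta$. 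Only those $\mathbf{a}$ for which $s(\mathbf{a})$ lies within distance $\lambda^n/(1-\lambda)=O(\delta)$ of $I$ contribute a nonzero term, and by the separation the number of such $\mathbf{a}$ is at most $O(\delta\cdot 2^n)+1$. Since $\nu_n$ is a probability measure, each term is bounded by $1$, so
\[
\mu_\lambda(I)\leq 2^{-n}\bigl(O(\delta\cdot 2^n)+1\bigr)=O(\delta)+2^{-n}.
\]
The choice of $n$ forces $2^{-n}=\delta^{\log 2/\log(1/\lambda)}$, and since $\lambda>1/2$ the exponent is strictly greater than $1$, hence $2^{-n}\ll\delta$ and $\mu_\lambda(I)=O(\delta)$.

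The principal obstacle is securing the separation estimate, which is the algebraic heart of Garsia's original argument and is exactly what distinguishes Garsia numbers from general algebraic integers. Using that $\beta=\lambda^{-1}$ has norm $\pm 2$ and that all Galois conjugates $\gamma_1,\ldots,\gamma_k$ of $\beta$ satisfy $|\gamma_j|>1$, one bounds the nonzero rational integer $\bigl|\prod_{\sigma}\sigma\bigl(\sum_{i=1}^{n}(a_i-a_i')\beta^{n-i}\bigr)\bigr|$ from below by $1$ and from above by $O\bigl(|\beta|^n\prod_{j}|\gamma_j|^n\cdot|s(\mathbf{a})-s(\mathbf{a}')|\bigr)=O(2^n|s(\mathbf{a})-s(\mathbf{a}')|)$, which is essentially the calculation already carried out in the proof of Lemma~\ref{Garsia separation}. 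Once the uniform bound $\mu_\lambda(I)=O(|I|)$ is in hand, the Radon--Nikodym theorem immediately yields that $\mu_\lambda$ is absolutely continuous, with a bounded density.
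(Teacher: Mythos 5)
The paper does not prove Theorem~\ref{Garsia's thm}; it is stated with a direct attribution to Garsia~\cite{Gar} and no argument is supplied, so there is no in-paper proof to compare against. On its own terms, your proposal is correct and amounts to the classical Garsia/Peres--Solomyak argument. The algebraic step---the $C/2^n$ separation of the partial sums $\sum_{i=1}^n a_i\lambda^{i-1}$ over distinct $\mathbf a\in\{0,1\}^n$---is exactly the inequality established inside the paper's proof of Lemma~\ref{Garsia separation}, and you extract it correctly. The conditioning decomposition $X=S_n+\lambda^n R$ with $R$ independent and distributed as $X$, the support bound $\lambda^n/(1-\lambda)$ for $\nu_n$, the count of at most $O(\delta\cdot 2^n)+1$ contributing words, and the scale choice $\lambda^n\asymp\delta$ all fit together. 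The one step worth spelling out is that $2^{-n}/\delta\asymp(2\lambda)^{-n}\to 0$ (equivalently $2^{-n}=\delta^{\log 2/\log(1/\lambda)}$ with exponent strictly greater than $1$ precisely because $\lambda>1/2$), so the residual $2^{-n}$ is absorbed into $O(\delta)$. This gives $\mu_\lambda(I)=O(|I|)$ uniformly, hence a bounded density and absolute continuity, as claimed.
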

	\begin{thm}[Mauldin and Simon \cite{MauSim}]
		\label{Equivalent measures}
		If $\mu_{\lambda}$ is absolutely continuous then $\mu_{\lambda}$ is equivalent to $\L_{1}|_{[0,\frac{1}{1-\lambda}]}.$
	\end{thm}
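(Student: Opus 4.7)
The plan is to show that the Radon--Nikodym density $f := d\mu_\lambda/d\L_1$ is strictly positive $\L_1$-a.e.\ on $X := [0, 1/(1-\lambda)]$; combined with the standing hypothesis this gives the two-sided equivalence. The starting point is the self-similarity $\mu_\lambda = \tfrac12 (S_0)_*\mu_\lambda + \tfrac12 (S_1)_*\mu_\lambda$, which is immediate from the Bernoulli decomposition $\sum_i \epsilon_i\lambda^{i-1} = \epsilon_1 + \lambda\sum_i \epsilon_{i+1}\lambda^{i-1}$. Re-expressed in densities and passing through the change of variables $x = S_i y$ (so $\det DS_i^{-1} = 1/\lambda$), this yields the almost-everywhere functional equation
\begin{equation*}
f(x) \;=\; \frac{1}{2\lambda}\bigl(f(T_0 x)\chi_{S_0(X)}(x) + f(T_1 x)\chi_{S_1(X)}(x)\bigr).
\end{equation*}

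Set $Z := \{x \in X : f(x) = 0\}$. Since both summands on the right are nonnegative, $x \in Z \cap S_i(X)$ forces $T_i x \in Z$, modulo a single $\L_1$-null set, for each $i \in \{0,1\}$. Iterating this implication along a finite word $\omega = (\omega_1, \ldots, \omega_n)$, the inverse map $S_\omega^{-1} = T_{\omega_n}\circ\cdots\circ T_{\omega_1}$ sends $Z \cap S_\omega(X)$ into $Z$ up to $\L_1$-null sets. Because $S_\omega^{-1}$ expands Lebesgue measure by the factor $\lambda^{-n}$, we obtain the scaling inequality
\begin{equation*}
\L_1(Z) \;\ge\; \lambda^{-n}\,\L_1\bigl(Z \cap S_\omega(X)\bigr) \qquad \text{for every } \omega \in \{0,1\}^n,\ n \ge 1.
\end{equation*}

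I then argue by contradiction: assume $\L_1(Z) > 0$ and use the Lebesgue density theorem to pick a density point $x^* \in Z$ of $Z$. By surjectivity of $\pi_\lambda : \{0,1\}^\N \to X$, choose $\omega$ with $\pi_\lambda(\omega) = x^*$. The cylinders $U_n := S_{\omega|_n}(X)$ are intervals containing $x^*$ with $\operatorname{diam}(U_n) = \L_1(U_n) = \lambda^n/(1-\lambda) \to 0$, so they form a regular Vitali family at $x^*$ and satisfy $\L_1(Z \cap U_n)/\L_1(U_n) \to 1$. Feeding this into the scaling inequality gives
\begin{equation*}
\L_1(Z) \;\ge\; \lambda^{-n}(1-o(1))\,\frac{\lambda^n}{1-\lambda} \;\longrightarrow\; \frac{1}{1-\lambda} = \L_1(X),
\end{equation*}
forcing $f = 0$ almost everywhere and contradicting $\int_X f\,d\L_1 = \mu_\lambda(X) = 1$. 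The main subtlety is justifying the passage from the a.e.\ functional equation to the honest set inclusion $T_i(Z \cap S_i(X)) \subseteq Z$ modulo null sets in a way that survives iteration along all $2^n$ words of length $n$ (one must verify that a single common null set suffices, using that the $T_i$ are bi-Lipschitz and countable unions of null sets are null); once this is secured, the density-point contradiction is clean, uses nothing about $\lambda$ beyond $\lambda \in (1/2,1)$, and in particular does not require Garsia's theorem.
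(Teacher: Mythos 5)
The paper states this result as a citation to Mauldin and Simon \cite{MauSim} and does not reproduce a proof, so there is no in-paper argument to compare against; I will simply assess your proposal.

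Your proof is correct. The functional equation $f(x) = \tfrac{1}{2\lambda}\bigl(f(T_0 x)\chi_{S_0(X)}(x) + f(T_1 x)\chi_{S_1(X)}(x)\bigr)$ follows from the stationarity $\mu_\lambda = \tfrac12(S_0)_*\mu_\lambda + \tfrac12(S_1)_*\mu_\lambda$, and since all terms are nonnegative, $f(x)=0$ with $x\in S_i(X)$ forces $f(T_i x)=0$, so $Z=\{f=0\}$ is forward-invariant under each $T_i$ on $S_i(X)$ up to a null set. The subtlety you flag is genuine but handled correctly: if $N_0$ is the exceptional null set of the functional equation, then the countable union $\bigcup_{\omega\in\{0,1\}^*} S_\omega(N_0)$ is still null (each $S_\omega$ is Lipschitz), and removing it from $Z$ yields a set on which the iteration along every finite word is honest, giving $\L_1(Z)\ge\lambda^{-n}\L_1(Z\cap S_\omega(X))$ for all $\omega\in\{0,1\}^n$. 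The density-point step is also fine: in $\R$, nested intervals containing a point and shrinking to it form a regular differentiation basis (an interval of length $\ell$ containing $x^*$ sits inside $B(x^*,\ell)$, which has only twice its measure), so $\L_1(Z\cap U_n)/\L_1(U_n)\to 1$ at a density point $x^*$ of $Z$, and the scaling inequality forces $\L_1(Z)\ge\L_1(X)$, contradicting $\int_X f\,d\L_1 = 1$. You are also right that the argument uses only $\lambda\in(1/2,1)$ (needed so that $X$ is an interval and the cylinders $S_\omega(X)$ are subintervals) and not Garsia's theorem. This self-similarity plus Lebesgue-density scheme is in the same spirit as Mauldin and Simon's own argument.
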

	
	\begin{proof}[Proof of Theorem \ref{Garsia thm}]
		Since $\lambda$ is the reciprocal of a Garsia number, we know by Theorem \ref{Garsia's thm} that $\mu_{\lambda}$ is absolutely continuous with respect to the Lebesgue measure. Therefore it admits a Radon-Nikodym derivative $d_{\lambda}$. By definition we have that $d_{\lambda}(x)>0$ for $\mu_{\lambda}$ almost every $x$. Now using Theorem \ref{Equivalent measures}, we in fact have that $d_{\lambda}(x)>0$ for Lebesgue almost every $x\in [0,\frac{1}{1-\lambda}]$. By Lebesgue's differentiation theorem \cite{Mat} we know that 
		\begin{equation}
			\label{Lebesgue differentiation thm}
			\lim_{r\to 0}\frac{\mu_{\lambda}(B(x,r))}{2r}=d_{\lambda}(x)
		\end{equation} for Lebesgue almost every $x$. Let us now fix an arbitrary $x'$ satisfying 
		\begin{equation}
			\label{Full measure satisfied}
			d_{\lambda}(x')>0\textrm{ and }\eqref{Lebesgue differentiation thm}.
		\end{equation} Given any function $h:\mathbb{N}\to [0,\infty)$ satisfying $\sum_{n=1}^{\infty}h(n)=\infty$, we will show that there exists $C>0$ such that for any $r$ sufficiently small we have 
		\begin{equation}
			\label{Want to show}
			\frac{\L_{1}(R(S_{\lambda},h)\cap B(x',  r))}{2r}\geq C.
		\end{equation} Since almost every $x\in [0,\frac{1}{1-\lambda}]$ satisfies \eqref{Full measure satisfied}, it will follow from \eqref{Want to show} that the set of density points for $R(S_{\lambda},h)^c \cap [0,\frac{1}{1-\lambda}]$ has zero Lebesgue measure. By Theorem \ref{Lebesgue density theorem} it will then follow that Lebesgue almost every $x\in [0,\frac{1}{1-\lambda}]$ is contained in $R(S_{\lambda},h)$. As such to prove our result it is sufficient to prove that \eqref{Want to show} holds.

		We know that $x'$ satisfies \eqref{Lebesgue differentiation thm} and $d_{\lambda}(x')>0$. Therefore there exists $r^*>0$ such that for all $r\in (0,r^*)$ we have 
		\begin{equation}
			\label{density bounds}
			d_{\lambda}(x')r\leq \mu_{\lambda}(B(x',r))\leq 4d_{\lambda}(x')r.
		\end{equation}
		In what remains of our proof we will assume that $r\in (0,r^*)$ is fixed. Now we note that $\mu_{\lambda}$ is the weak star limit of the sequence of measures $(\mu_{\lambda,n})_{n=1}^{\infty}$ where for each $n\in \N$ we let $$\mu_{\lambda,n}:=\frac{1}{2^n}\sum_{\a\in \{0,1\}^n}\delta_{\pi_{\lambda}(\a^{\infty})}.$$ Using \eqref{density bounds} together with the definition of $\mu_{\lambda,n}$, we see that there exists $N\in\mathbb{N}$ such that if we let $$\Omega_{r,n}:=\{\a\in\{0,1\}^n:\pi_{\lambda}(\a^{\infty})\in B(x',r)\}$$ then \begin{equation}
			\label{density count}
			\frac{2^nd_{\lambda}(x')r}{2}\leq \# \Omega_{r,n}\leq 8\cdot 2^nd_{\lambda}(x')r
		\end{equation} for all $n\geq N$. 
		
		Let $h:\N\to[0,\infty)$ be an arbitrary function satisfying $\sum_{n=1}^{\infty}h(n)=\infty$.  For each $n\geq N$ we let $$E_{n}=\bigcup_{\a\in \Omega_{r,n}}B\left(\pi_{\lambda}(\a^{\infty}),\frac{h(n)}{2^{n+1}}\right).$$ Replacing $h$ with a smaller function is necessary, it follows from Lemma \ref{Garsia separation} that without loss of generality we can assume that the union defining $E_{n}$ is always disjoint. We then let $$E_{\infty}=\bigcap_{m=N}^{\infty}\bigcup_{m=n}^{\infty} E_{n}.$$ We may also assume without loss of generality that $h$ is bounded. As such, it follows from Lemma \ref{Recurrence interpretation} that $E_{\infty}\subset R(S_{\lambda},h)\cap [x'-r,x'+r]$. Therefore to prove that \eqref{Want to show} holds, we need to show that there exists $C>0$ such that 
		\begin{equation}
			\label{WTSGarsia}
			\L_{1}(E_{\infty})\geq C2r.
		\end{equation} To prove this inequality we use Lemma \ref{Quasi-independence on average}. Since the balls in the union defining $E_n$ are disjoint, it follows from \eqref{density count} that  
		\begin{equation}
			\label{Enmeasure}
			\frac{h(n)rd_{\lambda}(x')}{4}\leq \L_{1}(E_n)\leq 4h(n)rd_{\lambda}(x'). 
		\end{equation}Which by our assumption on $h$ implies $\sum_{n=N}^{\infty}\L_{1}(E_n)=\infty$. So we satisfy the assumptions of Lemma \ref{Quasi-independence on average}. It remains to obtains good bounds for $\L_{1}(E_n\cap E_m)$. For a fixed $\a\in \Omega_{r,n}$, it follows from Lemma \ref{Garsia separation} and a volume argument that for $m>n\geq N$ we have $$\#\left\{\b\in \Omega_{r,m}:B\left(\pi_{\lambda}(\a^{\infty}),\frac{h(n)}{2^{n+1}}\right)\cap B\left(\pi_{\lambda}(\b^{\infty}),\frac{h(m)}{2^{m+1}}\right)\neq \emptyset\right\}\ll \frac{h(n)}{2^{n+1}}\cdot 2^m +1.$$ Using this bound and \eqref{density count} we have
		\begin{align*}
			\L_{1}(E_n\cap E_m)=&\sum_{\a\in \Omega_{r,n}}\L_{1}\left(B\left(\pi_{\lambda}(\a^{\infty}),\frac{h(n)}{2^{n+1}}\right)\cap E_m\right)\\
			\leq& \sum_{\a\in \Omega_{r,n}}\#\left\{\b\in \Omega_{r,m}:B\left(\pi_{\lambda}(\a^{\infty}),\frac{h(n)}{2^{n+1}}\right)\cap B\left(\pi_{\lambda}(\b^{\infty}),\frac{h(m)}{2^{m+1}}\right)\neq \emptyset\right\}\cdot \frac{h(m)}{2^{m}}\\
			\ll& \sum_{\a\in \Omega_{r,n}} \frac{h(n)h(m)}{2^n}+\frac{h(m)}{2^m}\\
			=&\frac{\# \Omega_{r,n}h(n)h(m)}{2^n}+\frac{\# \Omega_{r,n}h(m)}{2^m}\\
			\ll &h(n)h(m)d_{\lambda}(x')r+\frac{h(m)d_{\lambda}(x')r}{2^{m-n}}.
		\end{align*}
		Combining the bound above with \eqref{Enmeasure} we have
		\begin{align*}
			\sum_{n,m=N}^{Q}\L_{1}(E_n\cap E_m)&=\sum_{n=N}^{Q}\L_{1}(E_n)+2\sum_{n=N}^{Q-1}\sum_{m=n+1}^{Q}\L_{1}(E_n\cap E_m)\\
			&\ll rd_{\lambda}(x')\sum_{n=N}^{Q}h(n)+\sum_{n=N}^{Q-1}\sum_{m=n+1}^{Q}\left(h(n)h(m)d_{\lambda}(x')r+\frac{h(m)d_{\lambda}(x')r}{2^{m-n}}\right)\\
			&\leq rd_{\lambda}(x')\left(\sum_{n=N}^{Q}h(n)+\left(\sum_{n=N}^{Q}h(n)\right)^2+\sum_{n=N}^{Q-1}\sum_{m=n+1}^{Q}\frac{h(m)}{2^{m-n}}\right)\\
			&\leq rd_{\lambda}(x')\left(\sum_{n=N}^{Q}h(n)+\left(\sum_{n=N}^{Q}h(n)\right)^2+\sum_{m=N+1}^{Q}\sum_{n=N}^{m-1}\frac{h(m)}{2^{m-n}}\right)\\
			&= rd_{\lambda}(x')\left(\sum_{n=N}^{Q}h(n)+\left(\sum_{n=N}^{Q}h(n)\right)^2+\sum_{m=N+1}^{Q}h(m)\sum_{n=N}^{m-1}\frac{1}{2^{m-n}}\right)\\
			&\leq rd_{\lambda}(x')\left(\sum_{n=N}^{Q}h(n)+\left(\sum_{n=N}^{Q}h(n)\right)^2+\sum_{m=N+1}^{Q}h(m)\right)\\
			&\ll rd_{\lambda}(x')\left(\sum_{n=N}^{Q}h(n)+\left(\sum_{n=N}^{Q}h(n)\right)^2\right).
		\end{align*}
		In the penultimate line we used that $\sum_{n=N}^{m-1}\frac{1}{2^{m-n}}\leq 1$ for all $m> N$. Now using the above, Lemma \ref{Quasi-independence on average} and \eqref{Enmeasure}, we have 
		\begin{align*}
			\L_{1}(E_\infty)&\geq \limsup_{Q\to \infty}\frac{(\sum_{n=N}^{Q}\L_{1}(E_n))^2}{\sum_{n,m=N}^{Q}\L_{1}(E_n\cap E_m)}\\
			&\geq \limsup_{Q\to \infty}\frac{(\frac{rd_{\lambda}(x')}{4})^2\left(\sum_{n=N}^{Q}h(n)\right)^2}{rd_{\lambda}(x')\left(\sum_{n=N}^{Q}h(n)+\left(\sum_{n=N}^{Q}h(n)\right)^2\right)}\\
			&\gg rd_{\lambda}(x').
		\end{align*}
		In the final line we used that $\sum_{n=N}^{\infty}h(n)=\infty$, so $$\lim_{Q\to\infty}\frac{\sum_{n=N}^{Q}h(n)}{\left(\sum_{n=N}^{Q}h(n)\right)^2}=0.$$ In summary, we have shown that $\L_{1}(E_{\infty})\gg rd_{\lambda}(x').$ This is exactly the content of \eqref{WTSGarsia} and so our proof is complete. 
	\end{proof}
	We now move on to the proof of Theorem \ref{GarsiaMTP}. The application of the mass transference principle from \cite{BerVel} is standard so we only give brief details. 
	
	\begin{proof}
		The first part of Theorem \ref{GarsiaMTP} follows from Lemma \ref{Recurrence interpretation} and a covering argument. For the second part, notice that Lemma \ref{Recurrence interpretation} implies that for any function $h:\N\to [0,\infty)$ we have
		$$R(S_{\lambda},h):=\bigcap_{m=1}^{\infty}\bigcup_{n=m}^{\infty}\bigcup_{\a\in \{0,1\}^n}B\left(\pi_{\lambda}(\a^{\infty}),\frac{h(n)}{(1-\lambda^n)2^n}\right).$$
		This equality allows us to reinterpret the set $R(S_\lambda,h)$ in terms of a limsup set coming from a sequence of balls. With this reinterpretation, we naturally fall into the framework of \cite{BerVel}. In particular, combining Theorem 3 from \cite{BerVel} with Theorem \ref{Garsia thm} we obtain our result.
	\end{proof}
	\noindent \textbf{Acknowledgements.} This research was supported by an EPSRC New Investigator Award (EP/W003880/1). The authors thank the anonymous referee for their feedback.

	\[
	\]

\end{document}